\definecolor{hypercolor}{rgb}{0.5,0,0.5}
\theoremstyle{plain}
\newtheorem{theorem}{Theorem}[subsection]
\newtheorem{proposition}[theorem]{Proposition}
\newtheorem{lemma}[theorem]{Lemma}
\newtheorem{corollary}[theorem]{Corollary}
\newtheorem{conjecture}[theorem]{Conjecture}
\theoremstyle{definition}
\newtheorem{definition}[theorem]{Definition}
\newtheorem{example}[theorem]{Example}
\newtheorem{remark}[theorem]{Remark}
\numberwithin{equation}{subsection}
\DeclareMathOperator{\crys}{crys}
\DeclareMathOperator{\dR}{dR}
\DeclareMathOperator{\CH}{CH}
\DeclareMathOperator{\Gal}{Gal}
\DeclareMathOperator{\Br}{Br}
\DeclareMathOperator{\fl}{fl}
\DeclareMathOperator{\D}{D}
\DeclareMathOperator{\id}{id}
\DeclareMathOperator{\Hom}{Hom}
\DeclareMathOperator{\Ext}{Ext}
\DeclareMathOperator{\Km}{Km}
\DeclareMathOperator{\SO}{SO}
\DeclareMathOperator{\SL}{SL}
\DeclareMathOperator{\Spin}{Spin}
\DeclareMathOperator{\spec}{Spec}
\DeclareMathOperator{\alg}{alg}
\DeclareMathOperator{\num}{num}
\DeclareMathOperator{\ch}{ch}
\DeclareMathOperator{\td}{Td}
\DeclareMathOperator{\Iso}{Isom}
\DeclareMathOperator{\cl}{cl}
\DeclareMathOperator{\R}{R}
\DeclareMathOperator{\cha}{char}
\DeclareMathOperator{\SN}{SN}
\DeclareMathOperator{\ad}{ad}
\DeclareMathOperator{\even}{even}
\DeclareMathOperator{\Og}{O}
\DeclareMathOperator{\tr}{tr}
\DeclareMathOperator{\fil}{Fil}
\DeclareMathOperator{\disc}{disc}
\DeclareMathOperator{\hdg}{Hdg}
\DeclareMathOperator{\Gm}{\mathbb{G}_m}
\DeclareMathOperator{\Sch}{\mathtt{Sch}}
\newcommand{\sX}{\mathscr{X}}
\newcommand{\sY}{\mathscr{Y}}
\newcommand{\srM}{\mathscr{M}}
\newcommand{\sM}{\mathscr{M}}
\newcommand{\Zl}{\mathbb{Z}_{\ell}}
\newcommand{\Zp}{\mathbb{Z}_p}
\newcommand{\Ql}{\mathbb{Q}_{\ell}}
\newcommand{\bbZ}{\mathbb{Z}}
\newcommand{\bbC}{\mathbb{C}}
\newcommand\cE{\mathcal{E}}
\newcommand\cF{\mathcal{F}}
\newcommand\cK{\mathcal{K}}
\newcommand\cL{\mathcal{L}}
\newcommand\cO{\mathcal{O}}
\newcommand\cP{\mathcal{P}}
\newcommand\cX{\mathcal{X}}
\newcommand\cY{\mathcal{Y}}
\renewcommand\AA{\mathbb{A}}
\newcommand\CC{\mathbb{C}}
\newcommand\FF{\mathbb{F}}
\newcommand\GG{\mathbb{G}}
\newcommand\QQ{\mathbb{Q}}
\newcommand\ZZ{\mathbb{Z}}
\newcommand\bB{\mathbf{B}}
\newcommand\bR{\mathbf{R}}
\newcommand\rD{\mathrm{D}}
\newcommand\rH{\mathrm{H}}
\newcommand\rK{\mathrm{K}}
\newcommand\rN{\mathrm{N}}
\newcommand\rO{\mathrm{O}}
\newcommand\rT{\mathrm{T}}
\newcommand\rV{\mathrm{V}}
\newcommand\fX{\mathfrak{X}}
\newcommand\fY{\mathfrak{Y}}
\newcommand\frh{\mathfrak{h}}
\newcommand\ord{{\rm ord}}
\newcommand\rank{{\rm rank}}
\newcommand\Pic{{\rm Pic}}
\newcommand\NS{{\rm NS}}
\newcommand\Gr{{\rm Gr}}
\newcommand\GL{{\rm GL}}
\newcommand{\Spec}{ \mathrm{Spec}}
\newcommand{\abelian}{\mathtt{AV}}
\newcommand{\coh}{\mathtt{Coh}}
\DeclareMathOperator{\et}{{\acute{e}t}}
\DeclarePairedDelimiterX\set[1]\lbrace\rbrace{\def\given{\;\delimsize\vert\;}#1}
\DeclarePairedDelimiter{\pair}{\langle}{\rangle}
\newcommand{\cf}{\textit{cf.~}}
\title{Derived isogenies and isogenies for abelian surfaces}
\author{Zhiyuan Li}
\address{SCMS, Fudan University, No,2005 Songhu Road, Shanghai, China}
\email{zhiyuan\_li@fudan.edu.cn}
\author{Haitao Zou}
\address{Universit\"at Bielefeld,
Universit\"atsstraße 25, 33615, Bielefeld, Germany}
\email{hzou@math.uni-bielefeld.de}
\subjclass[2020]{Primary 14F08, 14K02; Secondary 14G17}
\keywords{abelian surface, isogenies, derived categories, twisted sheaves, Torelli theorems}
\thanks{This project is supported by the NKRD Program of China (No.~2020YFA0713200),  NSFC (No.~12121001, No.~12171090 and No.~12425105)  and Shanghai Pilot Program for Basic Research (No.~21TQ00). Z. Li is also a member of LMNS. H. Zou is also supported by the Deutsche Forschungsgemeinschaft (DFG, German Research Foundation) -- Project-ID 491392403 -- TRR 358}
\begin{document}
\begin{abstract}
In this paper, we study the twisted Fourier--Mukai partners of abelian surfaces. Following the work of Huybrechts \cite{Hu19}, we introduce the twisted derived equivalences (also called derived isogenies) between abelian surfaces. We show that there is a twisted derived Torelli theorem for abelian surfaces over algebraically closed fields with characteristic $\neq 2,3$.
For this we firstly extend a trick given by Shioda on integral Hodge structures, to rational Hodge structures, $\ell$-adic Tate modules and $F$-crystals. Using this trick, we can confirm the Tate conjecture in a special case.
Then we make use of Tate's isogeny theorem to  give a characterization of the derived isogenies between abelian surfaces via  so called principal isogenies. As a consequence, we show the two abelian surfaces are principally isogenous if and only if they are derived isogenous.
\end{abstract}
\maketitle

\tableofcontents
\section{Introduction}
\subsection{Background}In the study of abelian varieties, a natural question is to classify the Fourier--Mukai partners of abelian varieties.  Due to Orlov and Polishchuk's \emph{derived Torelli theorem} for abelian varieties in (\cf\cite{orlov2002,polishchuk96}), there is a geometric/cohomological  classification of derived equivalences between them.
More generally, one can consider the {\it twisted derived equivalence} or so called {\it derived isogeny} between abelian varieties  in the spirit of \cite{Hu19}. 
\begin{definition}
    Two abelian varieties $X$ and $Y$  are derived isogenous if they can be connected by derived equivalences between twisted abelian varieties, i.e. there exist twisted abelian varieties $(X_i,\alpha_i)$ and $(X_i,\beta_i)$ such that there is a sequence of derived equivalences
\begin{equation}\label{eq:zigzagtwsted}
\begin{tikzcd}[row sep=0cm, column sep=small]
\D^b(X,\alpha)\ar[r,"\simeq"] &\D^b(X_1,\beta_1) &  &\\
& \D^b(X_1,\alpha_2) \ar[r,"\simeq"]&  \D^b(X_2,\beta_2) &\\
& & \vdots & \\
& & \D^b(X_n,\alpha_{n+1}) \ar[r,"\simeq"] & \D^b(Y,\beta_n)
\end{tikzcd}
\end{equation}
where $\D^b(X,\alpha)$ is the bounded derived category of $\alpha$-twisted coherent sheaves on $X$. 
\end{definition}

In \cite{Stellari2007}, Stellari proved that derived isogenous complex abelian surfaces are isogenous using the the Kuga--Satake varieties associated to their transcendental lattices (\cf Theorem 1.2 in loc. cit. ). However, the converse is not true as there are isogenous abelian surfaces which are not derived isogenous (\cf Remark 4.4 (ii) in loc. cit. ).  The main goal of this paper is to give a cohomological and geometric classification of  derived isogenies between abelian surfaces over algebraically closed fields of arbitrary characteristic.  

\subsection{Twisted derived Torelli theorem  for abelian surfaces in characteristic zero} 
Let us first classify the derived isogenies between abelian surfaces in term of isogenies.  For this purpose, we need to introduce a new type of isogeny: We say two abelian surfaces $X$ and $Y$ are  \emph{principally isogenous} if there is an isogeny $f$ from $X$ to $Y$ of square degree. For example, $X$ and its dual abelian variety $\widehat{X}$ are principally isogenous since any polarization $\cL$ on $X$ induces an isogeny $f_{\cL} \colon X \to \widehat{X}$ of degree $\chi(\cL)^2$.

The first main result is %

\begin{theorem}\label{TDGC} Let $X$ and $Y$ be two abelian surfaces over $k=\bar{k}$ with $\cha(k)=0$. The following statements are equivalent.
\begin{enumerate}[label={\rm(\roman*)},leftmargin=\parindent,labelindent=\parindent,align=left]
\item $X$ and $Y$ are derived isogenous.
\item $X$ and $Y$ are principally isogenous.
\end{enumerate}
 
\end{theorem}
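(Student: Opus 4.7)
The plan is to reduce first to the case $k=\CC$ by spreading out $X$ and $Y$ over a finitely generated subfield of $\bar k$ and embedding it into $\CC$, since both derived isogeny and principal isogeny are stable under extension of algebraically closed fields in characteristic zero. Over $\CC$, I would bootstrap the twisted derived Torelli statement announced in the introduction, namely
\[
X\text{ and }Y\text{ are derived isogenous}\iff\text{there is a rational Hodge isometry }\phi\colon H^2(X,\QQ)\xrightarrow{\sim}H^2(Y,\QQ)
\]
for the cup-product pairing. Granting this, the theorem reduces to a purely Hodge-theoretic statement: rational Hodge isometries of $H^2$ correspond exactly to principal isogenies.

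For (i) $\Rightarrow$ (ii) I would start from such a $\phi$ and apply the rational extension of Shioda's trick advertised in the abstract, using the identification $H^2(X,\QQ)=\wedge^2 H^1(X,\QQ)$. The outcome should be that $\phi$, possibly after composition with the duality involution $X\leftrightarrow\widehat X$, is of the form $\wedge^2\psi$ for some isomorphism of rational polarized Hodge structures $\psi\colon H^1(X,\QQ)\xrightarrow{\sim}H^1(Y,\QQ)$ (resp.\ $H^1(\widehat X,\QQ)\xrightarrow{\sim}H^1(Y,\QQ)$). Since such a $\psi$ is the same data as an invertible element of $\Hom(Y,X)\otimes\QQ$ (resp.\ $\Hom(Y,\widehat X)\otimes\QQ$), clearing denominators produces an honest isogeny $f$ between the relevant pair. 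The square-degree condition then drops out of a pairing computation: $f^*$ acts on $H^2$ as a similitude of the cup-product form with multiplier $\deg f$, so compatibility with the honest Hodge isometry $\phi$ forces $\deg f$ to be a perfect square in $\ZZ$.

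For (ii) $\Rightarrow$ (i) I would run the argument in reverse: given an isogeny $f$ of degree $n^2$ (from $X$ or $\widehat X$ to $Y$), the rescaling $\tfrac{1}{n}f^*\colon H^2(Y,\QQ)\to H^2(X,\QQ)$ is an honest rational Hodge isometry precisely because $f^*$ is a cup-product similitude of factor $n^2$, and the twisted derived Torelli theorem then produces the required zig-zag of twisted derived equivalences.

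The main obstacle is the rational form of Shioda's trick: one must show that every Hodge isometry of $\wedge^2 H^1(X,\QQ)$ is of the form $\wedge^2\psi$ for some $\psi\in\GL(H^1(X,\QQ))$, possibly composed with the dualizing involution coming from $\widehat{(-)}$. Over $\ZZ$ this is Shioda's original observation; over $\QQ$ the key input is a spinor-norm or orientation argument to separate the two connected components of $\SO(\wedge^2 H^1(X,\QQ))$ and to match them with the two cases $X\to Y$ versus $\widehat X\to Y$ appearing in the definition of principal isogeny. A secondary subtlety is the denominator-clearing step needed to pass from a $\QQ$-isogeny to a genuine isogeny while preserving the square-degree property.
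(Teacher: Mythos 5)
Your proposal is essentially the same approach as the paper: reduce to $\CC$ by spreading out, use the twisted derived Torelli correspondence between derived isogeny and rational Hodge isometry of $\rH^2$, then convert a Hodge isometry into an isogeny (and back) via a rational version of Shioda's trick. The paper proves exactly these steps in Theorem~\ref{thm:twistedisogenychar0}, Corollary~\ref{cor:hodge-derived}, and Proposition~\ref{prop:hodgeisogeny}, and you have identified all the load-bearing ideas, including the spinor norm obstruction.

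One small conceptual mix-up is worth flagging. You describe the spinor norm as "separat[ing] the two connected components of $\SO(\wedge^2 H^1(X,\QQ))$" and as distinguishing $X \to Y$ from $\widehat X \to Y$. In the paper these are two distinct dichotomies. Passing from $X$ to $\widehat X$ is controlled by the determinant (or Dickson invariant) of the isometry with respect to admissible bases: a Hodge isometry lands in $\SO(\Lambda)(\QQ)$ after possibly composing with $\psi_{\cP}$, which has determinant $-1$ (Example~\ref{ex:poincareisomorphism}). The spinor norm, which takes values in the full group $\QQ^{*}/(\QQ^{*})^{2}$ rather than $\pm 1$, is the \emph{second} obstruction: once the isometry is admissible, $\SN(\varphi)\in\QQ^{*}/(\QQ^{*})^{2}$ measures whether it is a wedge of a $\QQ$-linear isomorphism of $\rH^1$. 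When $\SN(\varphi)=n\neq 1$, the paper's Proposition~\ref{prop:hodgeisogeny}(2) passes to $E=\QQ(\sqrt{n})$, applies the descent argument, and produces a quasi-isogeny $f_0$ with $\varphi = f_0^*/n$; the degree then comes out as $n^2$ (up to fourth powers from clearing denominators, which preserve squareness). Your sketch implicitly assumes $\SN(\varphi)=1$ after the dualization, which need not hold, so the quadratic extension and Galois descent step is a real ingredient you should add. Apart from that, your pairing computation for the square-degree condition is correct, and your reduction framework and the (ii)~$\Rightarrow$~(i) direction match the paper's route through reflexive Hodge isometries and moduli of twisted sheaves.
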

A notable fact for abelian surfaces is that besides their $1^{st}$ cohomology groups, their $2^{nd}$ cohomology groups also carry rich structures.  In the untwisted case, Mukai and Orlov have showed \cite{mukai84, orlov2002} that  \begin{center}
   $\rD^b(X)\cong \rD^b(Y) \Leftrightarrow \widetilde{\rH}(X,\ZZ)\cong_{\hdg}  \widetilde{\rH}(Y,\ZZ)\Leftrightarrow \rT(X)\cong_{\hdg} \rT(Y),$ 
\end{center}  
where $\widetilde{\rH}(X,\ZZ)$ and $\widetilde{\rH}(Y,\ZZ)$ are the Mukai lattices, $\rT(X)\subseteq \rH^2(X,\ZZ)$ and $\rT(Y)\subseteq \rH^2(Y,\ZZ)$ denote the transcendental lattices, $\cong_{\hdg}$ means integral Hodge isometries (\cf\cite[Theorem 5.1]{BridgelandMaciocia}).  The following result can be viewed as a generalization of Mukai and Orlov's  result. 

\begin{corollary}\label{cor:TDGC}
The statement (i) and (ii) of Theorem \ref{TDGC} is also equivalent to the following equivalent conditions
\begin{enumerate}[start=3,label={\rm(\roman*)},leftmargin=\parindent,labelindent=\parindent,align=left]
\item the associated Kummer surfaces $\Km(X)$ and $\Km(Y)$ are derived isogenous;
\item Chow motives $\frh(X)\cong \frh(Y)$ are isomorphic as Frobenius exterior algebras;
\item  even degree Chow motives $\frh^{\even}(X)\cong \frh^{\even}(Y)$ are isomorphic as Frobenius algebra.
\end{enumerate}
When $k=\CC$,  then the conditions above are also equivalent to 
\begin{enumerate}[resume*]
    \item $\rH^2(X,\QQ)\cong \rH^2(Y,\QQ)$ as a rational Hodge isometry;
    \item  $\widetilde{\rH}(X,\QQ)\cong \widetilde{\rH}(Y,\QQ) $ as a rational Hodge isometry;
     \item $\rT(X)\otimes \QQ\cong \rT(Y)\otimes \QQ$ as a rational Hodge isometry. 
\end{enumerate}
\end{corollary}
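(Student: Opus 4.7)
The plan is to pivot on Theorem \ref{TDGC}, which gives (i) $\Leftrightarrow$ (ii), and to play each remaining condition against principal isogeny. The engine will be the rational version of Shioda's trick promised in the abstract.

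Over $\CC$, I would first establish (vi) $\Leftrightarrow$ (vii) $\Leftrightarrow$ (viii) by Witt cancellation of algebraic summands. The Mukai lattice $\widetilde{\rH}(X,\QQ)=\rH^0\oplus\rH^2\oplus\rH^4$ differs from $\rH^2(X,\QQ)$ only by a hyperbolic plane of Hodge type $(0,0)$, and $\rH^2(X,\QQ)=(\rT(X)\otimes\QQ)\perp(\NS(X)\otimes\QQ)$ with the second summand of pure type $(1,1)$. Any two abelian surfaces have isometric rational $\rH^2$ (both are $\wedge^2$ of a rank-four symplectic $\QQ$-space), so Witt cancellation promotes an isometry of transcendental parts to one of $\rH^2$ or $\widetilde{\rH}$; conversely restriction to the transcendental part is immediate.

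The heart of the argument is (ii) $\Leftrightarrow$ (viii) over $\CC$. One direction is direct: an isogeny $f\colon X\to Y$ (or $\widehat{X}\to Y$) of square degree $n^2$ yields $n^{-1}\wedge^2 f^*\colon\rH^2(Y,\QQ)\to\rH^2(X,\QQ)$ as a rational Hodge isometry restricting to the transcendental parts. For the converse I would apply the rational Shioda trick: extend a rational Hodge isometry $\rT(X)\otimes\QQ\cong\rT(Y)\otimes\QQ$ to all of $\rH^2$, then use the identification $\rH^2=\wedge^2\rH^1$ together with an integral clearing step to produce, after possibly replacing $X$ by $\widehat{X}$, an actual morphism on $\rH^1$ which integrates to an isogeny of square degree.

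The remaining equivalences are formal consequences. For (i) $\Leftrightarrow$ (iii) I would track the twisted Kummer construction $(X,\alpha)\mapsto(\Km(X),\bar\alpha)$: a zig-zag of twisted derived equivalences of abelian surfaces descends to one between the associated twisted Kummer K3 surfaces, while conversely the transcendental lattices of $X$ and $\Km(X)$ are rationally Hodge isometric, so (iii) delivers (viii) and one closes the loop. For (iv)--(v), a twisted derived equivalence induces an isomorphism of rational Chow motives as Frobenius (exterior) algebras through the twisted Mukai morphism on Chern characters; the converse passes through Betti realization after base change to $\CC$, landing in (vi)--(viii). The main technical obstacle is the Shioda trick at the rational level -- producing an honest isogeny of square degree from a mere rational Hodge isometry -- together with carrying the twist data compatibly through the Kummer construction.
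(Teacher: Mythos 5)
Your overall strategy is aligned with the paper's: pivot on Theorem~\ref{TDGC} for (i)~$\Leftrightarrow$~(ii), use Witt cancellation for (vi)~$\Leftrightarrow$~(vii)~$\Leftrightarrow$~(viii), and invoke the rational Shioda trick (Proposition~\ref{prop:hodgeisogeny}) as the engine. Two of your implications deviate from the paper's route in ways worth flagging.

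For (i)~$\Leftrightarrow$~(iii), you propose that a zig-zag of twisted derived equivalences between abelian surfaces ``descends'' to one between the twisted Kummer surfaces. This is not established anywhere in the paper and would require real work: it is not automatic that a twisted Fourier--Mukai kernel on $X\times Y$ pushes down through the $\iota$-quotient and crepant resolution to a twisted kernel on $\Km(X)\times\Km(Y)$. The paper sidesteps this entirely by staying at the level of Hodge theory: it uses the canonical integral Hodge isometry $\rT(X)(2)\simeq\rT(\Km(X))$ of Morrison and then quotes Huybrechts' twisted derived Torelli theorem for K3 surfaces (that derived isogeny of K3 surfaces is equivalent to a rational Hodge isometry of transcendental lattices). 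Your converse direction (going back from (iii) to (viii) through transcendental lattices) does match the paper, but the forward direction should be replaced by the same Hodge-theoretic reduction.

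For (iv) and (v), you suggest that a twisted derived equivalence directly yields a Frobenius (exterior) algebra isomorphism of motives ``through the twisted Mukai morphism on Chern characters.'' The weight argument of Fu--Vial only produces an isomorphism of $\frh^2$, and it is not at all obvious that the resulting isomorphism of $\frh^{even}$ respects the Frobenius algebra structure. The paper instead feeds the \emph{principal isogeny} $f\colon X\to Y$ of degree $n^2$ obtained from (ii) into the motivic decomposition, defining $\Gamma_{2i}=\frac{1}{n^i}f^*\circ\pi_X^{2i}$, which is manifestly compatible with the exterior algebra structure on $\frh^\bullet=\bigwedge^\bullet\frh^1$. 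This is the clean route: prove (ii)~$\Rightarrow$~(iv)~$\Rightarrow$~(v) from the isogeny, then close the loop (v)~$\Rightarrow$~(ii) by taking the Betti realization of the degree-$2$ component (which is a Hodge isometry by the Frobenius condition) and appealing to Corollary~\ref{cor:hodge-derived}. You should adopt this path rather than relying on the Mukai morphism.
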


Here, the motive $\frh(X)$ admits a canonical motivic decomposition produced by Deninger--Murre \cite{DeningerMurre91}
\begin{equation}\label{eq:motivicdecomposition}
\frh(X)=\bigoplus_{i=0}^{4}\frh^{i}(X)
\end{equation}
such that $\rH^*(\frh^i(X)) \cong \rH^i(X)$ for any Weil cohomology $\rH^*(-)$. It satisfies $\frh^{i}(X)=\bigwedge^{i}\frh^{1}(X)$ for all $i$, $\frh^{4}(X)\simeq \mathds{1}(-4)$ and $\bigwedge^{i}\frh^{1}(X)=0$ for $i>4$ (\cf\cite{MR1265530}). The motive $\frh(X)$ is a Frobenius exterior algebra objects in the category of Chow motives over $k$ and the even degree part 
\begin{equation}\label{eq:motivcevendecomposition}
\frh^{\even}(X)= \bigoplus_{k =0}^2 \bigwedge^{2k} \frh^1(X)
\end{equation}
forms a Frobenius algebra object in the sense of \cite{LV19}. %

The equivalences $(i) \Leftrightarrow (iv) \Leftrightarrow (v)$ are motivic realizations of derived isogenies between abelian surfaces, which can be viewed as an analogy of the motivic global Torelli theorem on K3 surfaces (\cf\cite[Conjecture 0.3]{Hu19} and \cite[Theorem 1]{LV19}).  The equivalences  $(i) \Leftrightarrow (iii) \Leftrightarrow (viii)$ can be viewed as  a generalization of  \cite[Theorem 1.2]{Stellari2007}. The Hodge-theoretic realization $(i) \Leftrightarrow (vi)$   follows a similar strategy  of \cite[Theorem 0.1]{Hu19}, which makes use of Shioda's period map and Cartan--Dieudonn\'e decomposition of a rational isometry.  The  equivalences $(vi) \Leftrightarrow (vii) \Leftrightarrow (viii)$ follow from the Witt cancellation theorem (see \S\ref{subsec:proofTDGC}).

\subsection{Shioda's trick} The proof of Theorem \ref{TDGC} is concluded by a new ingredient so called rational Shioda's trick on abelian surfaces. The original Shioda's trick in \cite{Shioda78}  plays a key role in the proof of Shioda's global Torelli theorem for abelian surfaces, which links the weight-1 integral  Hodge structure to the weight-2 integral Hodge structure of an abelian surface. We generalize it in the following form.
 \begin{theorem}[Shioda's trick, see \S\ref{section:shiodatrick}]\label{Hodge-derivediso}
 Let $X$ and $Y$ be two complex abelian surfaces. Then for any admissible Hodge isometry $$\psi\colon \rH^2(X,\QQ)\xrightarrow{\sim} \rH^2(Y,\QQ)$$ we can find an isogeny $f \colon Y\to X$ of degree $d^2$ such that $\psi = \frac{f^*}{d}$.
 \end{theorem}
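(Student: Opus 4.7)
The plan is to mimic the original Shioda trick: realize $\rH^2(X,\QQ) = \bigwedge^2 \rH^1(X,\QQ)$, and use the classical exceptional isomorphism $\SL_4 \twoheadrightarrow \Spin(3,3)$ to lift the weight-two isometry $\psi$ to a weight-one Hodge isomorphism, after which Deligne's dictionary converts that lift into the desired isogeny.

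Set $V_X := \rH^1(X,\QQ)$ and $V_Y := \rH^1(Y,\QQ)$. Under the K\"unneth/exterior identifications, the cup product on $\rH^2(X,\QQ)$ corresponds to the wedge pairing $\bigwedge^2 V_X \times \bigwedge^2 V_X \to \bigwedge^4 V_X \cong \QQ$, the last isomorphism being induced by the fundamental class $[X]$. For any $\QQ$-linear $\varphi\colon V_X\to V_Y$, the induced $\bigwedge^2\varphi$ is a conformal similitude with conformal factor $\det\varphi$; in particular $\bigwedge^2\varphi$ is an isometry iff $\det\varphi=1$. Over $\QQ$ the exterior-square homomorphism realizes a double cover of one spinor/orientation component of $\Iso(\bigwedge^2 V_X,\bigwedge^2 V_Y)$ by $\SL$-maps $V_X\to V_Y$, while the other component is parametrized by $\SL$-maps $V_X \to V_Y^\ast$.

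The first step is to apply this linear-algebraic lifting. I would use the admissibility hypothesis on $\psi$ precisely to place $\psi$ in the component realized by maps $V_X \to V_Y$ (rather than by maps to $V_Y^\ast$, which would produce an isogeny involving $\widehat Y$ rather than $Y$); this gives a $\QQ$-linear isomorphism $\tilde\psi\colon V_X \to V_Y$ with $\det\tilde\psi = 1$ and $\bigwedge^2\tilde\psi = \psi$, unique up to the sign $\pm 1$. The second step is to verify that $\tilde\psi$ is a morphism of rational weight-one Hodge structures. Since the Hodge decomposition on $\bigwedge^2 V_X$ is induced from the one on $V_X$, and $\psi = \bigwedge^2\tilde\psi$ preserves the weight-two Hodge filtration, the lifts $\pm\tilde\psi$ must either both respect the weight-one Hodge decomposition $V^{1,0}\oplus V^{0,1}$ or both exchange its summands; the admissibility condition again eliminates the anti-holomorphic case, so $\tilde\psi$ is a Hodge isomorphism.

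The last step is to invoke Deligne's equivalence between isogeny classes of complex abelian varieties and polarizable rational Hodge structures of weight one to convert $\tilde\psi$ into a quasi-isogeny $Y\dashrightarrow X$. Choose the smallest positive integer $m$ such that $m\tilde\psi$ sends $\rH^1(X,\ZZ)$ into $\rH^1(Y,\ZZ)$; then $m\tilde\psi$ is the $H^1$-pullback of a genuine isogeny $f\colon Y\to X$, with $\deg(f) = \det(m\tilde\psi) = m^4$. On $H^2$ we get $f^\ast = \bigwedge^2(m\tilde\psi) = m^2\,\psi$, so setting $d := m^2$ yields $\psi = f^\ast/d$ and $\deg(f) = d^2$, as required. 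The main obstacle, in my view, is the lifting step: isolating precisely which component of the rational isometry group is covered by $\bigwedge^2$ over $\QQ$ and simultaneously ensuring the lift is a Hodge (rather than anti-Hodge) morphism. Packaging both orientation choices into the notion of "admissible" is the crux of the argument; once that is cleanly stated, the remainder reduces to standard exterior-algebra identities and Deligne's dictionary.
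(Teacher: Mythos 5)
Your high-level plan matches the paper's: realize $\rH^2 = \bigwedge^2 \rH^1$, use the exceptional cover $\SL_4 \to \SO_6$ to lift the weight-two isometry to a weight-one Hodge isomorphism, then convert via the rational-Hodge-structure dictionary to a quasi-isogeny and clear denominators. The Hodge-compatibility of the lift and the final degree bookkeeping are essentially correct. However, there is a genuine gap at the crucial lifting step: over $\QQ$, admissibility of $\psi$ (that is, $\det\psi = 1$ in admissible bases) does \emph{not} put $\psi$ in the image of $\wedge^2\colon \SL_4(\QQ) \to \SO(\Lambda)(\QQ)$. The relevant exact sequence (Lemma \ref{lemma:isogenywedge}) is
\[
1 \to \set{ \pm \id_4} \to \SL_4(\QQ) \xrightarrow{\wedge^2} \SO(\Lambda)(\QQ) \xrightarrow{\SN} \QQ^*/(\QQ^*)^2,
\]
so the image of $\wedge^2$ is exactly $\ker(\SN)$, and the spinor norm $\SN(\psi) \in \QQ^*/(\QQ^*)^2$ is a nontrivial obstruction. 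Your phrasing (``one spinor/orientation component\ldots{} while the other\ldots'') suggests a picture of only a two-fold ambiguity, but $\QQ^*/(\QQ^*)^2$ is infinite; admissibility alone does not control it, and in general no $\QQ$-linear $\tilde\psi$ with $\wedge^2\tilde\psi = \psi$ exists.

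The paper (Proposition \ref{prop:hodgeisogeny}(2)) closes this gap by passing to the quadratic field $E = \QQ(\sqrt{n})$, where $n$ represents $\SN(\psi)$. There the spinor norm trivializes, so Lemma \ref{lemma:spinornormone} produces a lift $g \in \SL_4(E)$. Since the wedge map and $\psi$ are defined over $\QQ$, the Galois conjugate satisfies $\wedge^2\sigma(g)=\wedge^2 g$, hence $\sigma(g) = \pm g$. If $\sigma(g)=g$ you recover a $\QQ$-rational lift (your argument then applies directly); if $\sigma(g)=-g$, then $g_0 := \sqrt{n}\,g \in \GL_4(\QQ)$ is a $\QQ$-rational $n^2$-admissible map with $\wedge^2 g_0 = n\psi$, and one applies the Hodge-isogeny version (part (1) of the same proposition) to $g_0$ before dividing by $n$. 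Your sketch needs to incorporate this quadratic base extension and Galois descent to be complete; everything downstream of that point is fine.
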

As an application, the generalized Shioda's trick gives the algebraicity of some cohomological cycles. For any integer $d$, one can consider a Hodge similitude of degree $d$
\[ \rH^2(X,\QQ)\xrightarrow{\sim} \rH^2(Y,\QQ),\]
called a \textit{Hodge isogeny of degree $d$}. Due to the Hodge conjecture on products of abelian surfaces,  we know that every Hodge isogeny is algebraic.  Our generalized Shioda's trick actually shows that it is induced by certain isogenies.  Similarly, we prove the  $\ell$-adic and $p$-adic Shioda's trick, which gives a proof of Tate conjecture for isometries between the $2^{nd}$-cohomology groups (as either Galois-modules or crystals) of abelian surfaces over finitely generated fields. See Corollary \ref{cor:shiodatrickforisog} for more details.

\subsection{Results in positive characteristic}The second part of this paper is to investigate the twisted derived Torelli theorem over  positive characteristic fields.
Due to the absence of a satisfactory global Torelli theorem, one cannot follow the argument in characteristic zero directly.  Instead, we need some input from $p$-adic Hodge theory. Our formulation is the following.
\begin{theorem}\label{derivediso}
Let $X$ and $Y$ be two abelian surfaces over  $k=\bar{k}$ with $\cha(k)=p>3$. Then the following statements are equivalent.
\begin{enumerate}[label={\rm(\roman*$'$)},leftmargin=\parindent,labelindent=\parindent,align=left]
\item  $X$ and $Y$  are  prime-to-$p$ derived isogenous.
\item  $X$ and $Y$ are prime-to-$p$ principally isogenous. 
\end{enumerate}
Moreover, in case that $X$ is supersingular, then $Y$ is derived isogenous to $X$ if and only if $Y$ is supersingular.
 \end{theorem}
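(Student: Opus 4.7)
The plan is to imitate the proof of Theorem \ref{TDGC}, substituting the global Torelli input with Tate's isogeny theorem for abelian surfaces, and using the $\ell$-adic and crystalline versions of Shioda's trick in place of the Hodge-theoretic one. For the direction $(ii') \Rightarrow (i')$, start from a prime-to-$p$ isogeny $f \colon X \to Y$ (or $\hat{X} \to Y$) of square degree $d^{2}$ with $(d,p)=1$; then $\ker(f) \subset X[d]$ is a finite étale group scheme, and the standard Orlov--Polishchuk construction adapted to twisted abelian varieties produces Brauer classes $\alpha$ on $X$ (or $\hat X$) and $\beta$ on $Y$, of order dividing $d$, together with a twisted Fourier--Mukai equivalence $\D^b(X,\alpha) \simeq \D^b(Y,\beta)$. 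Since all data have order prime to $p$, this already exhibits $X$ and $Y$ as prime-to-$p$ derived isogenous.

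For $(i') \Rightarrow (ii')$, any prime-to-$p$ twisted derived equivalence between abelian surfaces induces compatible isometries of the Mukai realizations in $\ell$-adic cohomology as Galois modules (for every $\ell \neq p$) and in rational crystalline cohomology as $F$-isocrystals. Composing across the zigzag \eqref{eq:zigzagtwsted} and restricting to the degree-$2$ summand, one obtains an isometry $\rH^2(X, \QQ_\ell) \cong \rH^2(Y, \QQ_\ell)$ of Galois modules for every $\ell \neq p$ and an analogous $F$-isocrystal isometry between $\rH^2_{\crys}(X/W) \otimes \QQ$ and $\rH^2_{\crys}(Y/W) \otimes \QQ$. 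I would then apply the $\ell$-adic and crystalline Shioda's trick (Corollary \ref{cor:shiodatrickforisog}), which via Tate's isogeny theorem for abelian surfaces over finitely generated subfields of $k$ lifts the cohomological data to a genuine isogeny $Y \to X$ (or $Y \to \hat X$) of square degree; the prime-to-$p$ hypothesis on the Brauer data forces the resulting degree to be coprime to $p$, as required.

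The main obstacle lies in the crystalline step of $(i') \Rightarrow (ii')$: promoting an $F$-isocrystal isometry on $\rH^2_{\crys}$ to an actual isogeny of square degree requires Dieudonné theory plus a descent argument onto a finitely generated subfield where Tate's theorem applies, and then reconciling the $\ell$-adic and crystalline outputs to produce a single global isogeny rather than a collection of local ones. This is also precisely the point at which the prime-to-$p$ restriction enters, since the twisted Mukai realization is only well-behaved away from $p$ for Brauer classes of prime-to-$p$ order. For the supersingular addendum, note that the Newton polygon of $\rH^2_{\crys}$ is an invariant of the $F$-isocrystal and is preserved under any twisted derived equivalence; hence if $X$ is supersingular, so that $\rH^2_{\crys}(X/W)_\QQ$ is isoclinic of slope $1$, any derived isogenous $Y$ satisfies the same Newton condition and is therefore supersingular. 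Conversely, by Oort's theorem, any two supersingular abelian surfaces over $\bar k$ are isogenous; composing a chosen isogeny with a suitable self-isogeny one may arrange its degree to be a perfect square, and the twisted Fourier--Mukai construction of $(ii') \Rightarrow (i')$, extended to allow $p$-divisible kernels of local--local type (where the corresponding Brauer classes are accessed through flat cohomology), produces the required derived isogeny.
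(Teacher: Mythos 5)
Your high-level outline has the right ingredients (Shioda's trick, Tate's isogeny theorem, Newton-polygon invariance), but there are real gaps at each of the three places where the hard work actually happens, and the paper fills them in quite a different way than you propose.

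For $(ii') \Rightarrow (i')$ you invoke a ``standard Orlov--Polishchuk construction adapted to twisted abelian varieties'' that would turn a single prime-to-$p$ principal isogeny directly into a single twisted Fourier--Mukai equivalence. No such single-step construction is established or used in the paper, and it is not even true in characteristic zero: the characteristic-zero implication (Theorem \ref{thm:twistedisogenychar0}) produces a \emph{zigzag} of twisted equivalences via the Cartan--Dieudonn\'e decomposition of the associated rational Hodge isometry into reflections (Lemma \ref{CDP}, Theorem \ref{thm:reflexivetwistedderived}), not a single FM transform. The paper's actual argument lifts the isogeny to characteristic zero by Serre--Tate (Proposition \ref{lifting}), applies Theorem \ref{thm:twistedisogenychar0} to the generic fiber, and then specializes the resulting derived isogeny back via Theorem \ref{thm:specialization}. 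Your proposal skips this lifting--specialization loop entirely.

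For $(i') \Rightarrow (ii')$ you correctly flag the obstacle — Corollary \ref{cor:shiodatrickforisog} hands you a $\Zl$-quasi-isogeny for each $\ell \neq p$ and a $\ZZ_{p^2}$-quasi-isogeny at $p$, not a single global isogeny of square degree — but you do not resolve it, and Tate's isogeny theorem alone will not hand you the ``square degree'' condition. The paper closes exactly this gap by again lifting: it reduces to a single twisted derived equivalence prime-to-$p$, lifts both gerbes to characteristic zero (Lemma \ref{liftingtwistab}, Theorem \ref{thm1}), invokes Lemma \ref{lemma:liftingprimetop} to keep the lift prime-to-$p$, applies Theorem \ref{TDGC} to get a principal isogeny over the generic fiber, and then extends that isogeny to the special fiber by the N\'eron property together with Tate's spreading theorem for $p$-divisible groups. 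The crystalline Shioda trick only gives the weaker ``prime-to-$p$ isogenous''; the square-degree refinement comes from characteristic zero. Finally, for the supersingular direction, you gesture at ``extending the twisted FM construction to local-local kernels via flat cohomology,'' but this is where the paper's substance lies: Proposition \ref{ss-derivediso} rests on the Bragg--Lieblich twistor-space machinery (Propositions \ref{prop:R1}--\ref{prop:representabilityofflat}, Corollary \ref{cor:connectedcomponent}, Theorem \ref{thm:supersingularderivedtorelli}), none of which your sketch supplies or replaces with an alternative argument.
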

 
Here, we say a derived isogeny as \eqref{eq:zigzagtwsted} is {\it prime-to-$p$} if its crystalline realization is integral (see Definition \ref{def:primetopderived} for details), which is a condition somewhat technical.  The main ingredients in the proof of Theorem \ref{derivediso} are the lifting-specialization technique, which works well for prime-to-$p$ derived isogenies. Actually, our method shows that there is an implication  $(i')\Rightarrow (ii')$ for derived isogenies which are not necessarily being prime-to-$p$ (see Theorem \ref{genderivediso}). Conversely,  we believe that the existence of  quasi-liftable  isogenies will imply the existence of derived isogeny (see Conjecture \ref{gen-derivedconj}). The only obstruction is the existence of the specialization of non-prime-to-$p$ derived isogenies between abelian surfaces. See Remark \ref{rmk:specialization}.

Another natural question is  whether two abelian surfaces are derived isogenous if and only if their associated Kummer surfaces are derived isogenous over positive characteristic fields. Unfortunately,  we cannot fully prove the equivalence. Instead, we provide a  partial solution of this question.  See Theorem \ref{kumm-isogeny} for more details. 

 Similarly, one may ask whether such results also hold for K3 surfaces.  Let $\FF_q$ be a finite field with $q = p^k$ a power of some prime $p$. Recall that two K3 surfaces $S$ and $S'$ over $\FF_q$ are (geometrically) isogenous in the sense of \cite{Yang2020} if there exists an algebraic correspondence $\Gamma$ which induces an isometry of $\Gal(\bar{\FF}_p/k)$-modules
\[
\Gamma^\ast_\ell\colon \rH^2_{\et}(S_{\bar{\FF}_p},\QQ_\ell)\xrightarrow{\sim}\rH^2_{\et}(S'_{\bar{\FF}_p},\QQ_\ell),
\]
for all $\ell\nmid p$ and  an isometry of isocrystals
\[
\Gamma^\ast_p\colon \rH^2_{\crys}(S_{k}/K)\xrightarrow{\sim}\rH^2_{\crys}(S'_{k}/K),
\]
for some finite extension $k/\FF_q$ and the fraction field $K$ of $W=W(k)$. More generally, we can take a finitely generated field $k$ over $\FF_p$ and a Cohen ring of $k$. Then we say that the isogeny is prime-to-$p$ if the isometry $\Gamma^\ast_p$ is integral, i.e., $\Gamma^{\ast}_p\left(\rH^2_{\crys}(S_{k}/W) \right)=\rH^2_{\crys}(S'_{k}/W)$. This leads us to a formulation of the twisted derived Torelli conjecture for K3 surfaces. 
\begin{conjecture}\label{mainconj}
For two K3 surfaces $S$ and $S'$ over a finitely generated field $k$, then the following are equivalent.
\begin{enumerate}[label={\rm (\alph*)}]
    \item There exists a derived isogeny $\D^b(S) \sim \D^b(S')$.
    \item There exists an isogeny between $S$ and $S'$.
\end{enumerate} 
\end{conjecture}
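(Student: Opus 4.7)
The plan is to mirror the proof of Theorem~\ref{derivediso} for abelian surfaces: combine cohomological realization (for the forward direction) with a lifting--specialization argument that feeds into Huybrechts' characteristic-zero result \cite{Hu19}. As for abelian surfaces, the prime-to-$p$ hypothesis should be exactly what makes the lifting step tractable.

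For $(a) \Rightarrow (b)$, I would transfer Huybrechts' $\CC$-argument to positive characteristic. Each derived equivalence in the zig-zag \eqref{eq:zigzagtwsted} acts on the $\ell$-adic and crystalline realizations of the twisted Mukai lattice; the prime-to-$p$ hypothesis supplies integrality of the crystalline realization, while $\ell$-adic integrality is automatic for $\ell \neq p$. By emulating the Cartan--Dieudonn\'e decomposition of a rational isometry used in \cite{Hu19} (together with an $\ell$-adic/$p$-adic version of Shioda's trick analogous to the one in \S\ref{section:shiodatrick}), one should extract from these Mukai isometries a rational isometry of $\rH^2$ whose $\ell$-adic and crystalline realizations satisfy exactly the conditions of a prime-to-$p$ isogeny in the sense of \cite{Yang2020}.

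For $(b) \Rightarrow (a)$, I would proceed as in \S\ref{subsec:proofTDGC}. Choose a projective lift $\widetilde{S}$ of $S$ to $W = W(k)$ with sufficiently rich generic transcendental lattice. The integrality of $\Gamma^\ast_p$ together with Madapusi's Kuga--Satake period map should produce a compatible lift $\widetilde{S}'$ of $S'$ for which $\Gamma$ lifts to a rational Hodge isometry on the geometric generic fibers. Huybrechts' theorem then supplies a twisted derived equivalence $\D^b(\widetilde{S}_\eta,\widetilde{\alpha}) \simeq \D^b(\widetilde{S}'_\eta,\widetilde{\beta})$, and the remaining task is to specialize this equivalence back to $k$.

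The main obstacle is this specialization step. Unlike for abelian surfaces, where the Fourier--Mukai kernel may be realized as a Poincar\'e line bundle on a product of duals, the twisted kernels between K3 surfaces are typically universal sheaves on moduli spaces of Bridgeland-stable twisted objects, whose reduction modulo $p$ requires good integral models of the Brauer classes $\widetilde{\alpha}$ and $\widetilde{\beta}$ with controlled $p$-torsion. When $S$ and $S'$ happen to be Kummer surfaces, one can hope to bootstrap via Theorem~\ref{kumm-isogeny} from Theorem~\ref{derivediso}, but the general case will almost certainly require a genuinely new input, perhaps via Lieblich--Olsson-style descent theory for twisted K3 surfaces combined with Matsusaka--Mumford-type arguments controlling the specialization of stable twisted objects.
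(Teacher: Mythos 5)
The statement you were asked to prove is labeled a \emph{conjecture} in the paper, and the paper does not prove it: the authors state explicitly that the implication $(a)\Rightarrow(b)$ is clear while the converse remains open, with only partial evidence supplied in the Kummer case (Theorem~\ref{kumm-isogeny}) and a reference to a weaker result of Bragg--Yang. Your proposal likewise stops short of a proof, so there is no completed argument to compare against; what is useful is to assess whether your assessment of the two directions matches the actual state of affairs.

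For $(a)\Rightarrow(b)$ your route is more complicated than necessary, and in one place it does not make sense. Shioda's trick is a mechanism specific to abelian surfaces: it passes from $\rH^2 \cong \bigwedge^2 \rH^1$ back to $\rH^1$ via the $\SL_4 \to \SO_6$ isogeny, and there is no analogue for K3 surfaces (which have no $\rH^1$). You do not need it here. The motivic realization of a twisted derived equivalence (the K3 analogue of Proposition~\ref{prop:prime-to-pisometry}, following \cite{Huybrechts18,LV19}) already hands you an algebraic correspondence inducing Galois-equivariant isometries $\Gamma^\ast_\ell$ on $\rH^2_{\et}(-,\QQ_\ell)$ for all $\ell\neq p$ and an $F$-isocrystal isometry $\Gamma^\ast_p$, and the prime-to-$p$ hypothesis on the derived isogeny is by definition the integrality of $\Gamma^\ast_p$. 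That is verbatim the definition of a prime-to-$p$ isogeny in the sense of Yang recalled in the paper, so $(a)\Rightarrow(b)$ follows immediately, with no Cartan--Dieudonn\'e decomposition and no lattice-theoretic argument. Also note that you assert $\ell$-adic integrality is automatic; it is not, and it is also not needed, since Yang's definition only requires the $\ell$-adic isometries to be rational.

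Your diagnosis of $(b)\Rightarrow(a)$ is sound and coincides with what the paper says is open. The lift--apply-Huybrechts--specialize strategy is indeed the natural one, and the bottleneck you identify — controlling the specialization of a twisted Fourier--Mukai kernel between K3 surfaces, given that one cannot simply take it to be a Poincar\'e-type bundle as for abelian surfaces — is exactly the obstruction the paper does not know how to remove. Your suggestion to bootstrap in the Kummer case matches Theorem~\ref{kumm-isogeny}, which is precisely the partial evidence the paper offers. In short: you are not missing a proof that exists; the proof does not exist, and your write-up correctly flags the real gap, while the $(a)\Rightarrow(b)$ half you over-engineer with machinery that does not transfer to K3 surfaces and is not needed.
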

The implication $(a)\Rightarrow (b)$ is clear,  while the converse remains open if $\mathrm{char}(k)>0$. In the case of Kummer surfaces, our results provide some evidence of Conjecture \ref{mainconj}.  We shall mention that recently Bragg and Yang have studied the derived isogenies between K3 surfaces over positive characteristic fields and they proved a weaker version of the statement in Conjecture \ref{mainconj} (\cf \cite[Theorem 1.2]{BraggYang2021}).

\subsection*{Organization of the paper.}
We will start with two preliminary sections, in which we include some well-known constructions and facts:
In Section \ref{abelian-surface}, we perform computations for the Brauer group of abelian surfaces using the Kummer construction. This allows us to prove the lifting lemma for twisted abelian surfaces of finite height. 

In Section \ref{Deriso},  we collect the knowledge on derived isogenies between abelian surfaces and their cohomological realizations, which include the motivic realization, the $\bB$-field theory, the twisted Mukai lattices, a filtered Torelli theorem and its relation to the moduli space of twisted sheaves. At the end of 
this section , we follow Bragg and Lieblich's twistor line argument in \cite{BraggLieblich19} to conclude the supersingular case of Theorem \ref{derivediso}.

In Section \ref{section:shiodatrick}, we revise Shioda's work and extend it to rational Hodge isogenies. This is the key ingredient for proving Theorem \ref{TDGC}. Furthermore,  after introducing the admissible $\ell$-adic and $p$-adic bases,  we prove the $\ell$-adic and $p$-adic Shioda's trick for admissible isometries on abelian surfaces.  In an application, we prove the algebraicity of these isometries on abelian surfaces over finitely generated fields. 

Sections \ref{section:derivedchar0} and \ref{section:quasiisogenycharp} are devoted to proving Theorem \ref{TDGC} and Theorem \ref{derivediso}.  Theorem \ref{TDGC} is essentially Theorem \ref{thm:reflectivetwistedderived} and Theorem \ref{thm:twistedisogenychar0}.  The proof of Theorem \ref{derivediso} is much more subtle. We establish the lifting and specialization theorem for prime-to-$p$ derived isogeny. Then we can conclude $(i')\Leftrightarrow (ii')$ from Theorem \ref{TDGC} for abelian surfaces of finite heights.   

\subsection*{Acknowledgement}The authors are grateful for the useful comments by Ziquan Yang. The authors thank the referees for their careful reading and valuable suggestions, which improved this article.

\subsection*{Notations and Conventions}

\subsubsection*{(1)} Throughout this paper, we will use the symbol $k$ to denote a field. If $k$ is a perfect field and $\cha{k}=p>0$, we denote $W \coloneqq W(k)$ for the ring of Witt vectors in $k$, which is equipped with a morphism $\sigma \colon W\rightarrow W$ induced by the Frobenius map on $k$. If $k$ is not perfect, we consider the Cohen ring $W $ with $W/pW =k$. Inside the ring of Witt vectors in a fixed algebraic closure $\bar{k}$ of $k$, we get a fixed Frobenius lift $\sigma \colon W \to W$ of $k$.

\subsubsection*{(2)} Let $X$ be a smooth projective variety over $k$. We denote by $\rH^\bullet_{\et}(X_{\bar{k}},\Zl)$  the  $\ell$-adic \'etale cohomology group of $X_{\bar{k}}$.  The $\Zl$-module $\rH^\bullet_{\et}(X_{\bar{k}},\Zl)$ has been endowed with a canonical $G_k= \Gal(\bar{k}/k)$-action.
We use $\rH^{i}_{\crys}(X/W)$ to denote  the $i$-th crystalline cohomology group of $X$ over the $p$-adic base $W \twoheadrightarrow k$, which is a $W$-module. 

\subsubsection*{(3)}For any abelian group $G$ and an integer $n$, we denote $G[n]$ for the subgroup of $n$ torsions in $G$ and $G\lbrace n \rbrace$ for the union of all $n$-power torsions. For a lattice $L$ in $\ZZ$ or $\QQ$ and an integer $n$, we use $L(n)$ for the lattice twisted by $n$, that is, $L = L(n)$ as $\ZZ$ or $\QQ$-module, but
\[
\langle x, y \rangle_{L(n)} = n \langle x, y \rangle_{L}.
\]
The reader shall not confuse it with the Tate twist.

\subsubsection*{(4)} Let $X$ and $Y$ be abelian surfaces.  Here is a list of all various notions of isogenies between $X$ and $Y$. 
\begin{itemize}
    \item {\bf Isogeny}: a surjective homomorphism $X \to Y$ with finite kernel.
    \item {\bf Quasi-isogeny}:  a $\QQ$-isogeny.
    \item {\bf Prime-to-$\ell$ quasi-isogeny}:  a $\ZZ_{(\ell)}$-isogeny.
    \item {\bf Principal quasi-isogeny}: a quasi-isogeny whose degree is a square. 
    \item {\bf Derived isogeny}: a chain of twisted derived equivalences from $X$ to $Y$.
     \item {\bf Prime-to-$\ell$ derived isogeny}: a derived isogeny whose cohomological realization is prime-to-$\ell$.
\end{itemize}

\section{Twisted abelian surface}
\label{abelian-surface}
In this section, we give some preliminary results in the theory of twisted abelian surfaces, especially those of positive characteristics. Many of them are well-known to experts. %

\subsection{Gerbes on abelian surfaces}\label{subsec:Gerbes}
Let $X$ be a smooth projective variety over a field $k$ and let $\mathscr{X}\rightarrow X$ be a $\mu_n$-gerbe over $X$. This corresponds to a pair $(X,\alpha)$ for some $\alpha\in \rH^2_{\fl}(X,\mu_n)$, where the cohomology group is with respect to the fppf topology. Since $\mu_n$ is commutative, there is a bijection of sets 
\[
 \rH^2_{\fl}(X,\mu_n) \xrightarrow{\sim}\left\{ \text{$\mu_n$-gerbes on $X$} \right\}\!/\simeq 
\]
where $\simeq$ is the $\mu_n$-equivalence defined as in \cite[IV.3.1.1]{Giraud}. We may write $\alpha=[\mathscr{X}]$. For any integer $m$, let $\sX^{(m)}$ be the gerbe corresponding to the cohomological class $m[\sX] \in \rH^2_{\fl}(X,\mu_n)$.

The Kummer exact sequence induces a surjective map 
\begin{equation}\label{braumap}
\rH^2_{\fl}(X,\mu_n)\rightarrow \Br(X)[n] 
\end{equation}
where the right-hand side is the \emph{cohomological Brauer group} $\Br(X) \coloneqq \rH^2_{\et}(X,\mathbb{G}_m)$.   There is an associated $\GG_m$-gerbe on $X$ via the map \eqref{braumap}, denoted by $\sX_{\GG_m}$. 
Let $[\sX_{\GG_m}]$ denote the corresponding class in $\Br(X)[n]$. If $[\sX_{\GG_m}] =0$,  we will call $\sX$ an \emph{essentially-trivial} $\mu_n$-gerbe.

Following \cite[\S 2]{Lieblich07}, one can define the twisted coherent sheaves and the twisted derived category of them in terms of gerbes.

\begin{definition}
Let $\sX \to X$ be a $\mu_n$-gerbe or $\GG_m$-gerbe over $X$. 
Let $\coh^{(m)}(\sX)$ be the abelian category of \emph{$\sX^{(m)}$-twisted coherent sheaves} consists of $m$-fold coherent sheaves on the stack $\sX$.  We define $\rD^{(m)}(\sX)$ as the bounded derived category of $\coh^{(m)}(\sX)$.

As shown in \cite[Proposition 2.1.2.6, Proposition 2.1.3.3]{Lieblich07}, there are natural equivalences \[\coh^{(1)}(\sX) \simeq \coh^{(1)}(\sX_{\Gm}) \simeq \coh(X, [\sX_{\GG_m}])\]
where the last is the abelian category of twisted sheaves defined by  C$\check{\mathrm a}$ld$\check{\mathrm a}$raru \cite{Caldararu}. Throughout this paper, we mainly use Lieblich's terminology. 
\end{definition}

For two $G$-gerbes $\sX \to X$ and $\sY \to Y$, we denote by $\sX \wedge_{i,j} \sY$  the $G$-gerbe on $X \times Y$ given by the image of $G\times G$-gerbe $\sX \times \sY$ under
\[
\rH^2_{\fl}(X\times Y, G \times G) \to \rH^2_{\fl}(X \times Y, G)
\]
induced by the multiplication $G \times G \to G, (g_1,g_2) \mapsto (g^i_1 g^j_2)$. There is an equivalence
\[
\coh^{(1)}(\sX \wedge_{i,j} \sY) \xrightarrow{\sim} \coh^{(i,j)}(\sX \times \sY),
\]
where the right-hand side is the subcategory of $(i,j)$-fold coherent sheaves on $\sX \times \sY$ (\cf~\cite[Corollary 2.3.2]{HLT21}). When $i=j =1$, we simply write $\sX\wedge \sY$ for $\sX \wedge_{1,1} \sY$.

A derived equivalence means a $k$-linear exact equivalence between triangulated categories in the form\[\Phi \colon \D^{(1)}(\sX) \xrightarrow{\sim} \D^{(1)}(\sY).\]If $\Phi$ is of the form\[\Phi^{\cP}(\cE) = \mathbf{R} q_* ( p^* \cE \otimes \cP),\]then we call it a Fourier--Mukai transform with a kernel $\cP \in \D^{(-1,1)}(\sX \times \sY)$ and the projections $p \colon \sX \times \sY \to \sX$, $q \colon \sX \times \sY \to \sY$, and $\sX,\sY$ are called a pair of Fourier--Mukai partners. If these gerbes are (essentially) trivial, then by Orlov's result, any $k$-linear exact equivalence between between bounded derived categories of smooth projective varieties is of this form. 

Similarly to Orlov's theorem, Canonaco and Stellari show that any twisted derived equivalence is also of Fourier--Mukai type.
\begin{proposition}[{\cite{AS07}}]
Any derived equivalence $\D^{(1)}(\sX) \xrightarrow{\sim} \D^{(1)}(\sY)$ can be uniquely (up to isomorphism) as a Fourier--Mukai transform
\[
\Phi^{\cP} \colon \D^{(1)}(\sX) \xrightarrow{\sim} \D^{(1)}(\sY),
\]
whose kernel $\cP$ is a perfect complex in $\D^{(-1,1)}(\sX \times \sY)$.
\end{proposition}

\subsection{Kummer construction}
If $k$ has characteristic $p\neq 2$, there is an associated Kummer surface $\Km(X)$ constructed as follows: 
\begin{equation}
\label{eq:kummerconstruction}
    \begin{tikzcd}
    \widetilde{X} \ar[r,"\widetilde{\sigma}"] \ar[d,"\pi"] & X \ar[d] \\
    \Km(X) \ar[r,"\sigma"] & X/{\iota}
    \end{tikzcd}
\end{equation}
where
\begin{itemize}
    \item $\iota$ is the involution of $X$ given by sending $x$ to $-x$;
    \item $\sigma$ is the crepant resolution of quotient singularities;
    \item $\widetilde{\sigma}$ is the blow-up of $X$ along the closed subscheme $X[2] \subset X$. Its birational inverse is denoted by $\widetilde{\sigma}^{-1}$.
\end{itemize}
Let $E \subset \widetilde{X}$ be the exceptional locus of $\widetilde{\sigma}$. For a  classical cohomology theory  $\rH^{\bullet}(-)$ (such as  Betti, \'etale and crystalline)  with coefficients in $R$, if $2$ is invertible in $R$, we have a canonical decomposition
 \begin{equation}
 \rH^2(\Km(X)) \cong \rH^2(X) \oplus \pi_* \Sigma_X,
 \end{equation}
 where $\Sigma_X$ is the summand in $\rH^2(\widetilde{X})$ generated by irreducible components of $E$. 
 
 Moreover, we have a composition of the sequence of morphisms
\[
(\widetilde{\sigma}^{-1})^{*}: \Br(\widetilde{X}) \to \Br(\widetilde{X} \setminus E) \cong \Br(X\setminus X[2])\cong \Br(X).
\]
Here, the last isomorphism $\Br(X) \to \Br(X\setminus X[2])$ is due to Grothendieck's purity theorem (\cf \cite{Gro68, Ces19}).

\begin{proposition}\label{prop:Kummerbrauer}
When $k= \bar{k}$ and $p \neq 2$, the $(\widetilde{\sigma}^{-1})^*\pi^*$ induces an isomorphism between cohomological Brauer groups
  \begin{equation}\label{Kumiso}
        \Theta \colon \Br(\Km(X))\to \Br(X).
    \end{equation}
In particular, when $X$ is supersingular over $\bar{k}$, then $\Br(X)$ is isomorphic to the additive group $\bar{k}$.
\end{proposition}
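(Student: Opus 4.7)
The plan is to factor $\Theta$ and check each piece is an isomorphism. The map naturally decomposes as
\[
\Br(\Km(X)) \xrightarrow{\pi^{\ast}} \Br(\widetilde{X}) \xrightarrow{(\widetilde{\sigma}^{-1})^{\ast}} \Br(X),
\]
and I would handle the two factors separately.

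For $(\widetilde{\sigma}^{-1})^{\ast}$, write it as the composition
\[
\Br(\widetilde{X}) \to \Br(\widetilde{X} \setminus E) \xleftarrow{\sim} \Br(X \setminus X[2]) \xleftarrow{\sim} \Br(X),
\]
where the middle isomorphism is induced by $\widetilde{\sigma}\colon \widetilde{X} \setminus E \xrightarrow{\sim} X \setminus X[2]$, and the rightmost isomorphism is Grothendieck's purity for the codimension-two complement $X[2]\subset X$. For the leftmost arrow, the divisor form of purity yields an exact sequence
\[
0 \to \Br(\widetilde{X}) \to \Br(\widetilde{X} \setminus E) \to \bigoplus_{x \in X[2]} \rH^1_{\et}(E_x, \QQ/\ZZ),
\]
and since each component $E_x \cong \mathbb{P}^{1}_{\bar{k}}$ is simply connected, the right-hand term vanishes and we conclude.

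The main step is proving $\pi^{\ast}\colon \Br(\Km(X)) \to \Br(\widetilde{X})$ is an isomorphism. Here $\pi$ is a finite flat degree-two morphism, so there is a transfer map $\Nm_{\pi}\colon \Br(\widetilde{X}) \to \Br(\Km(X))$ with $\Nm_{\pi} \circ \pi^{\ast} = 2 \cdot \id$; this immediately gives injectivity on the prime-to-$2$ torsion. To upgrade this to a genuine isomorphism including the $2$- and (in characteristic $p$) the $p$-primary torsion, I would pass to the Kummer sequence $0 \to \mu_{\ell^{n}} \to \GG_m \to \GG_m \to 0$ for every prime $\ell$, which identifies $\Br(-)[\ell^{n}]$ with the cokernel of $c_1\colon \Pic(-) \otimes \ZZ/\ell^{n} \to \rH^2_{\fl}(-,\mu_{\ell^{n}})$. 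Pushing this through the Kummer construction, one checks that the involution $\iota$ acts trivially on $\rH^2_{\et}(\widetilde{X})$ (being the square of its action on $\rH^1(X)$, which is $-1$), and that the sixteen new $(-2)$-curves on $\Km(X)$ account exactly for the discrepancy between $\pi^{\ast}\Pic(\Km(X))$ and $\Pic(\widetilde{X})^{\iota}$; the upshot is that $\pi^{\ast}$ becomes an isomorphism between the ``transcendental'' quotients that compute the Brauer groups.

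Finally, for the supersingular assertion: if $X$ is supersingular then $\Km(X)$ is a supersingular K3 surface, as one verifies either by computing $\rho(\Km(X)) = 16 + \rho(X) = 22 = b_2$ or directly from the slopes of the crystalline cohomology. Artin's theorem on formal Brauer groups then identifies $\Br(\Km(X))$ with the additive group $\bar{k}$, and the isomorphism $\Theta$ transports this structure to $\Br(X)$. The principal obstacle in the plan is the careful treatment of the $2$- and $p$-primary parts of $\pi^{\ast}$: the degree-two transfer argument is silent there, so one must work directly with flat cohomology and keep track of both the ramification of $\pi$ along $E$ and the exact contribution of the exceptional curves to $\Pic(\Km(X))$.
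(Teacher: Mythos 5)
Your overall strategy is the right one and parallels the paper's: factor $\Theta$ through the Kummer construction, reduce the crepant-resolution step to purity of the Brauer group, and reduce the degree-two quotient step to showing $\iota$ acts trivially on $\Br(X)$. The supersingular part is fine as well. But the plan has a genuine gap exactly where you flag it, and the gap is not a technicality but the real content of the proposition.

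First, your argument that ``$\iota$ acts trivially on $\rH^2_{\et}(\widetilde{X})$ (being the square of $-1$ on $\rH^1$)'' does not touch the $p$-primary torsion of $\Br(X)$ in characteristic $p$: there $\Br(X)[p^n]$ is a quotient of the \emph{flat} cohomology group $\rH^2_{\fl}(X,\mu_{p^n})$, not of $\ell$-adic \'etale cohomology. To see that $\iota$ acts trivially on $\rH^2_{\fl}(X,\mu_p)$ one needs an $\iota$-equivariant comparison; the paper uses the $d\log$ embedding $\rH^2_{\fl}(X,\mu_p)\hookrightarrow \rH^2_{\dR}(X/k)=\wedge^2\rH^1_{\dR}(X/k)$ from de Rham--Witt theory, on whose target the action is visibly the identity. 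You do not supply a substitute for this step. Second, even granting the triviality on $\Br(X)[p]$, one still has to promote this to all of $\Br(X)\{p\}$; the paper does this by an induction on $n$ using that $\alpha-\iota(\alpha)$ is simultaneously $p$-torsion and $2$-torsion (whence zero since $p$ is odd), and you omit this bootstrapping. Third, for the $2$-primary and general prime-to-$p$ torsion, the degree-two transfer trick alone leaves the $2$-torsion unaddressed; the paper instead appeals to the Hochschild--Serre spectral sequence for the $\ZZ/2$-quotient, whose relevant obstruction term $\rH^2(\ZZ/2\ZZ, k^*)$ vanishes precisely because $\cha(k)\neq 2$ --- this is the reason the hypothesis appears, and it should be invoked explicitly rather than hidden inside a vague ``pushing through the Kummer sequence.'' Finally, a small point of caution: your codimension-one purity sequence $0\to\Br(\widetilde X)\to\Br(\widetilde X\setminus E)\to\bigoplus\rH^1_{\et}(E_x,\QQ/\ZZ)$ is correct for the prime-to-$p$ part, but in characteristic $p$ the residue map for the $p$-part needs more care; the paper sidesteps this by only using codimension-two purity (on $X\setminus X[2]\subset X$), where the result is unconditional. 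Your decomposition of $(\widetilde\sigma^{-1})^*$ is fine in spirit, but if you insist on the ramification sequence along $E$, you should justify it for $p$-torsion or argue instead via the blow-up formula.
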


\begin{proof}
For torsions of \eqref{Kumiso} whose orders are coprime to $p$, the proof is essentially the same as \cite[Proposition 1.3]{AY09} by the Hochschild--Serre spectral sequence and the fact that $\rH^2(\ZZ/2\ZZ, k^*) =0$ (\cf\cite[Proposition 6.1.10]{WeibelBook}) as $\cha(k) >2$. See also \cite[Lemma 4.1]{Stellari2007} for the case $k = \bbC$.  For $p$-primary torsion part, we have $$\Br(\Km(X))\lbrace p\rbrace \cong \Br(X)^{\iota}\lbrace p \rbrace$$ from the Hochschild--Serre spectral sequence, where $\Br(X)^\iota$ is the $\iota$-invariant  subgroup.   Hence, it suffices to prove that $\iota$ acts trivially on $\Br(X)$. This is well-known to experts and works for any abelian varieties over an algebraically closed field (See the proof of \cite[Lemma 8.1]{Olsson25} for example).

In fact, $\rH^2_{\fl}(X,\mu_p)$ can be $\iota$-equivariantly embedded to $\rH^2_{\dR}(X/k)$ by de Rham--Witt theory (\cf\cite[Proposition 1.2]{Ogus78}). The action of $\iota$ on $\rH^2_{\dR}(X/k) = \wedge^2 \rH^1_{\dR}(X/k)$ is the identity, as its action on $\rH^1_{\dR}(X/k)$ is given by $x \mapsto -x$.
Thus the involution on $\rH^2_{\fl}(X,\mu_p)$ is trivial. 
Then by the exact sequence
\[
0 \to \NS(X) \otimes \mathbb{Z}/p \to \rH^2_{\fl}(X,\mu_p) \to \Br(X)[p] \to 0,
\] we can deduce that $\Br(X)[p]$ is invariant under the involution. Furthermore, for $p^n$-torsions with $n \geq 2$, we can proceed by induction on $n$. Assume that all elements in $\Br(X)[p^d]$ are $\iota$-invariant if $1 \leq d < n$.  By abuse of notation, we still use $\iota$ to denote the induced map $\Br(X)\to \Br(X)$. For $\alpha \in \Br(X)[p^{n}]$,  $p \alpha \in \Br(X)[p^{n-1}]$ is $\iota$-invariant. This gives
\[
p\alpha = \iota(p \alpha) = p \iota(\alpha),
\] which implies $\alpha - \iota(\alpha) \in \Br(X)[p]$.  Applying $\iota$ on $\alpha - \iota(\alpha)$, we can obtain 
\[
\alpha - \iota(\alpha) = \iota(\alpha) - \alpha.
\] 
It implies that $\alpha- \iota(\alpha)$ is also a $2$-torsion element. Since $p$ is coprime to $2$, we can conclude that $\alpha = \iota(\alpha)$.

If $X$ is supersingular, then $\Km(X)$ is also supersingular. We have already known that the Brauer group of a supersingular K3 surface  is isomorphic to $k$  by \cite{Ar74}. Thus, $\Br(X)\cong k$.
\end{proof}

\begin{remark}\label{rmk:absoluteR1}
In the case where $X$ is supersingular,
the method of \cite{Ar74} cannot be applied directly to show that $\Br(X) =k$ as $\rH^1_{\fl}(X, \mu_{p^n})$ is not trivial in general for an abelian surface $X$.
\end{remark}

\subsection{A lifting lemma}
In \cite{Bragg19}, Bragg has shown that a twisted K3 surface can be lifted to characteristic $0$. Though his method cannot be applied directly to twisted abelian surfaces, one can still obtain a lifting result for twisted abelian surfaces via the Kummer construction. The following result will be used frequently in this paper. 
\begin{lemma}\label{lemma:liftingtwistab}
Let $\sX_0\to X_0$ be a $\mathbb{G}_m$-gerbe on an abelian surface $X_0$ over $k=\bar{k}$. Suppose $\mathrm{char}(k)>2$ and $X$ has finite height. Then there exists a complete discrete valuation ring $V$ whose residue field is $k$ and fraction field is $K$ such that 
\begin{itemize}[leftmargin=*]
\item  there is a smooth projective abelian scheme $\sX_V\to X_V$ over $\Spec(V)$  whose special fiber of $\sX_V\to X_V$ is isomorphic to $\sX_0\to X_0$, 
    \item  There is a sequence of isomorphisms
    \[
    \NS(X_{\overline{K}}) \xleftarrow{\sim} \NS(X_V) \xrightarrow{\sim} \NS(X_{0}).
    \]
    Here $\NS(X_V)$ is the group the Cartier divisors on $X_V$ modulo the numerical equivalence over $V$ and the morphisms are given by pull-backs.
\end{itemize}
\end{lemma}
\begin{proof}
The existence of such lifting is ensured by \cite[Theorem 7.10]{Bragg19}, \cite[Lemma 3.9]{LZ21} and Proposition \ref{prop:Kummerbrauer}.
Generally speaking, let $\mathscr{S}_0\to \Km(X_0)$ be the associated twisted Kummer surface via the isomorphism \eqref{Kumiso} in Proposition \ref{prop:Kummerbrauer}. Then \cite[Theorem 7.10]{Bragg19} (by taking the $\Pic(\Km(X_0))$ as the saturated sublattice of itself) asserts that there exists  some discrete valuation ring $V$ and  a projective family of K3 surfaces 
\begin{center}
\begin{tikzcd}
\mathscr{S}_V \ar[r]\ar[rd] & S_V \ar[d]  \\
~ & \Spec (V) 
\end{tikzcd} 
\end{center}
such that  the special fiber is $\mathscr{S}_0\to \Km(X_0)$ and the specialization map of  Néron--Severi lattices $\NS(S_{\overline{K}}) \to \NS(\Km(X_0))$ is an isomorphism, where $K=\mathrm{Frac}(V)$. Now we can apply \cite[Lemma 3.9]{LZ21} to get a lifting $X_V\to \Spec(V)$ of $X$ such that $\Km(X_V)\cong S_V$ over $\Spec(V)$.

Note that we have the isomorphism $\NS(X_V) \cong \NS(X_K)$ since $X_V$ is regular. Consider the following commutative diagram (see \cite[Proposition 3.3]{MaulikPoonen} and its proof):
\begin{equation}\label{eq:lift-ns-ab}
\begin{tikzcd}
     \NS(X_{\bar{K}}) \ar[rr,bend right,"sp"] & \ar[l] \NS(X_K) \cong \NS(X_{V}) \ar[r]& \NS(X_0). 
\end{tikzcd}
\end{equation}
The morphism $\NS(X_V) \to \NS(X_0)$ is injective by \cite[Proposition 3.6]{MaulikPoonen} since $\NS(X_K)$ is torsion-free.
The morphism $\NS(X_K) \to \NS(X_{\overline{K}})$ is a primitive embedding since $\Br(V)=0$. Thus, it is sufficient to see that the specialization map $sp$ is an isomorphism.
The relative Kummer construction $\Km(X_V) \cong S_V$ canonically identifies the $\NS(X_K)$ (resp.~$\NS(X_0)$) as a sublattice of $\NS(S_{\overline{K}})$ (resp.~$\NS(\Km(X_0))$) after dividing $2$ (see \cite[Lemma 7.11]{Ogus78} or \cite[Proposition 3.1]{Shioda79}). Moreover, the identification is compatible under specialization. Then we can conclude it by the isomorphism $\NS(S_{\overline{K}}) \cong \NS(\Km(X_0))$.

To lift the $\GG_m$-gerbe $\sX_0\to X_0$ to $\Spec(V)$, it is equivalent to find a Brauer class in $\Br(X_V)$ such that its restriction to $X_0$ is $[\sX_0]$. Analogous to the proof of Proposition \ref{prop:Kummerbrauer}, there is a canonical map between the cohomological Brauer groups
\[
\Theta =(\widetilde{\sigma}^{-1})^*\pi^* \colon \Br(\Km(X_V))\to \Br(X_V)
\]
as in \eqref{Kumiso}. 
Taking the image $\Theta([\mathscr{S}_V])\in \Br(X_V)$, this is the lifting of $[\sX_0]$ as desired.  
\end{proof}

\subsection{Flat cohomology of abelian surfaces}\label{twistorspace}
Finally, we consider the representability of the flat cohomology of abelian surfaces. Let $f \colon X \to S$ be a flat and proper morphism of algebraic spaces of finite type over $k$. Consider the sheaf of the abelian groups $\R^i\!f_* \mu_p$ on the big fppf site $(\Sch\!/S)_{\fl}$, which can be expressed as the fppf sheafification of
\[
S' \mapsto \rH^i_{\fl}(X_{S'}, \mu_p)
\]
for any $S$-scheme $S'$. The representability of $\R^i\!f_* \mu_p$ is difficult to determine due to the complexity of flat cohomology with $p$-torsion coefficients. In this part, we will prove the representability for abelian surfaces.

\begin{proposition}\label{lemma:R1}
Let $f\colon X \to S$ be an abelian $S$-scheme of relative dimension $2$. Then $\R^1\!f_*\mu_p \cong \widehat{X}[p]$ is a finite flat $S$-group scheme.
\end{proposition}
\begin{proof}
It suffices to check them affine locally on the base. Assume $S$ is an affine scheme of finite type over $k$.
Taking the Stein factorization, we can further assume $f_* \cO_X \cong \cO_S$. Then $f_* \mu_p \cong \mu_p$ also holds universally. Under this assumption, we have an exact sequence of fppf-sheaves by Kummer theory:
\begin{equation}
    0 \to \R^1\!f_* \mu_p \to \R^1\!f_* \GG_m \to \R^1\!f_* \GG_m.
\end{equation}
Since $\R^1\!f_* \GG_m$ computes the relative Picard scheme $\Pic_{X/S}$ and the N\'eron--Severi group of $X$ is torsion-free, we can see
\[
\R^1\! f_* \mu_p \cong \ker\left( \Pic_{X/S} \xrightarrow{\cdot p} \Pic_{X/S} \right) \cong \ker\left( \Pic^0_{X/S} \xrightarrow{\cdot p} \Pic^0_{X/S} \right).
\]

On the other hand, it is well known that $\Pic^0_{X/S}$ is representable by the dual abelian $S$-scheme $\widehat{X}$ (\cf\cite[Corollay 6.8]{GIT}).
Thus, $\R^1\!f_* \mu_p$ is representable by the commutative finite group $S$-scheme $\widehat{X}[p]$.
\end{proof}

\begin{proposition}
\label{prop:representabilityofflat}
Let $f \colon \cX \to S$ be a proper smooth family of abelian surfaces over an algebraic space $S$. Then $\R^2\!f_*\mu_p$ is representable by an algebraic space, which is separated and locally of finite presentation over $S$. 
\end{proposition}
\begin{proof}
This is a consequence of \cite[Theorem 1.8, Example 5.9]{braggolsson21} as $\R^1\!f_*\mu_p$ is representable by Lemma \ref{lemma:R1}.
\end{proof}

\begin{remark}\label{rmk:relativeR1}
The case in which $X \to S = \spec(k)$ is a smooth surface for some field $k$ is claimed by Artin in \cite[Theorem 3.1]{Ar74} without proof. Bragg and Olsson provide a proof (Corollary 1.4 in \cite{braggolsson21}). For relative K3 surfaces, there is a moduli-theoretic proof given by Bragg and Lieblich using the stack of Azumaya algebras (\cf\cite[Theorem 2.1.6]{BraggLieblich19}). Their proof cannot be used directly for relative abelian surfaces as the essential assumption $\R^1\!f_* \mu_p=0$ fails in the fppf site $(\Sch\!/S)_{\fl}$.
\end{remark}
\begin{remark}
 An alternative proof for Proposition \ref{prop:representabilityofflat} is to apply Artin's representability criterion \cite[Theorem 5.3]{Artin69}. The most technical part is to see the separatedness. 
\end{remark}

The following observation is essential in the construction of the twistor space of supersingular abelian or K3 surfaces. 
\begin{corollary}[{\cite[Proposition 2.2.4]{BraggLieblich19}}]\label{cor:connectedcomponent}
Suppose that each geometric fiber of $f \colon \cX \to S$ is supersingular.
The connected components of any geometric fiber of $\R^2\!f_* \mu_p \to S$ are isomorphic to the additive group scheme $\GG_a$. 
\end{corollary}
\begin{proof}
Note that the completion of each geometric fiber of $\R^2\! f_* \mu_p$ at $\bar{s} \in S$, along the identity section, is isomorphic to the formal Brauer group $\widehat{\Br}_{X_{\bar{s}}/k(\bar{s})}$, which is isomorphic to $\widehat{\GG}_a$.
The only smooth connected $p$-torsion group scheme at $k(\bar{s})$ with this property is $\GG_a$.
\end{proof}

\section{Cohomological realizations of derived isogeny}\label{Deriso}

In this section, we provide a summary of the derived isogenies on the cohomology groups of abelian surfaces and introduce the notion of prime-to-$\ell$ derived isogenies. This action can be described in two ways:
\begin{enumerate}
    \item the motivic realization, which provides rational isomorphisms on the cohomology groups;
    \item the realization on the integral twisted Mukai lattices.
\end{enumerate}

Moreover, following the work in \cite{HLT20,LieblichOlsson15}, we extend the filtered Torelli theorem to twisted abelian surfaces over an algebraically closed field $k$ with $\cha(k) \neq2 $. As a corollary, we show that any Fourier--Mukai partner of a twisted abelian surface is isomorphic to a moduli space of stable twisted sheaves (\cf Theorem \ref{thm1}).

\subsection{Motivic realization of derived isogeny on cohomology groups}\label{sec:cohomologyrealization}

It is known that (twisted) derived equivalent smooth projective surfaces over a field $k$ have isomorphic Chow motives (see \cite[\S 2.4]{Huybrechts18} and \cite[\S 1.2]{LV19} for example).
We record these results for the convenience of the reader, focusing on abelian surfaces over $k$ as an example.

For any algebraic surface $X$ over a field $k$,
one may consider idempotent correspondences $\pi^2_{\alg,X}$ and $\pi^2_{\tr,X}$ in $\CH^2(X\times X)_{\QQ}$ defined as
\[
\pi_{\alg,X}^2 \coloneqq \sum_{i=1}^{\rho}\frac{1}{\deg(E_i \cdot E_i)} E_i \times E_i, \quad \pi^2_{\tr,X} = \pi^2_{X} - \pi^2_{\alg,X},
\]
where $\pi^2_{X}$ is the idempotent correspondence given by the Chow--K\"unneth decomposition \eqref{eq:motivicdecomposition} and $E_i$ are divisors generating the N\'eron--Severi group $\NS(X_{k^s})$ such that $E_i \cdot E_i \neq 0$ and $E_i \cdot E_j =0$ for any $i \neq j$.
Consider the decomposition  of $\frh^2(X)$:
\[
\frh^2(X) = \frh_{\alg}^2(X) \oplus \frh_{\tr}^2(X)
\]
given by $\pi^2_{\alg,X}$ and $\pi^2_{\tr,X}$.
It is not hard to see $\frh^2_{\alg}(X)$ is a Tate motive after base change to the separable closure $k^{s}$, whose Chow realization is
\[
\CH_{\QQ}^*(\frh^2_{\alg}(X_{k^s})) \cong \NS(X_{k^s})_{\QQ}.
\]

Let $\Phi^{\cP}\colon \D^{(1)}(\sX) \xrightarrow{\sim} \D^{(1)}(\sY)$ be a  derived equivalence between two twisted abelian surfaces over $k$. Consider the cycle class
\begin{equation}\label{eq:cycle-FM}
    \ch_{\sX^{(-1)} \wedge \sY} (\cP) \cdot \sqrt{\td_{X \times Y}} = \ch_{\sX^{(-1)} \wedge \sY} (\cP) \in \CH^*(X \times Y)_{\QQ}.
\end{equation}
Here $\ch_{\sX^{(-1)} \wedge \sY}(-)$ is the twisted Chern character defined same as in  \eqref{eq:twistedChernCharacter}, this 
 provides an isomorphism
\begin{equation*}
    \frh(X) \xrightarrow{\sim} \frh(Y),
\end{equation*}
which preserves the even-degree parts
\[
\frh^{\even}(-) \coloneqq \bigoplus^{2}_{k=0} \frh^{2k}(-) \cong \bigoplus^{2}_{k=0} \bigwedge^{2k} \frh^{1}(-).
\]
(cf.~\cite[\S\S 1.2.3]{LV19}). For a Weil cohomology theory $\rH$, its cohomological realization
\begin{equation}\label{eq:cohomologicalreal}
    \rH^{\even}(X) \xrightarrow{\sim} \rH^{\even}(Y)
\end{equation}
preserves the Mukai pairing. The cohomological realization \eqref{eq:cohomologicalreal} is not integral in general. We can introduce the prime-to-$\ell$ derived isogeny via the integral cohomological realizations, which will be used in the rest of the paper.

 \begin{definition}\label{def:primetopderived}
Let $\ell$ be a prime and $\cha(k)=p$.  When $\ell\neq p$, a derived isogeny $ \D^b(X) \sim \D^b(Y)$ given by 
\[
\begin{tikzcd}[row sep=0cm, column sep=small]
\D^b(X,\alpha)\ar[r,"\simeq"] &\D^b(X_1, \beta_1) &  &\\
& \D^b(X_1, \alpha_2) \ar[r,"\simeq"]&  \D^b(X_2,\beta_2) &\\
& & \vdots & \\
& & \D^b(X_n, \alpha_{n+1}) \ar[r,"\simeq"] & \D^b(Y,\beta_n)
\end{tikzcd}
\]
is called \textit{prime-to-$\ell$}  if each cohomological realization in the sequence
\[
\widetilde{\varphi}_{\ell} \colon \rH^{\rm even}_{\et}(X_{i-1,\bar{k}},\Ql) \xrightarrow{\sim} \rH^{\rm even}_{\et}(X_{i,\bar{k}},\Ql)
\]
is integral, i.e. $\widetilde{\varphi}_{\ell}\left(\rH^{\rm even}_{\et}(X_{\bar{k}}, \Zl) \right) = \rH^{\rm even}_{\et}(Y_{\bar{k}}, \Zl)$. In the case $\ell=p$,  it is  called \textit{prime-to-$p$} if each $\widetilde{\varphi}_p \colon \rH^{\rm even}_{\crys}(X_{i-1}/K) \xrightarrow{\sim} \rH^{\rm even}_{\crys}(X_{i}/K)$ is integral. 
\end{definition}

\begin{remark}\label{rmk:H2realization}Note that the correspondence \eqref{eq:cycle-FM} does not necessarily preserve the cohomological degrees. However, it admits a modification, that is an isomorphism between degree two parts: 
  Consider the cycle class $\lbrack \Gamma_{\tr} \rbrack  \in \CH^2(X \times Y)_{\QQ},$
given by the codimension two component of \eqref{eq:cycle-FM}. It induces an isomorphism of transcendental motives by a weight argument
\[
\lbrack \Gamma_{\tr}\rbrack_2 \coloneqq\pi^2_{\tr,Y} \circ \lbrack \Gamma_{\tr} \rbrack \circ \pi^2_{\tr,X} \colon \frh^2_{\tr}(X) \xrightarrow{\sim } \frh^2_{\tr}(Y).\]
It extends to an isomorphism $\frh^2(X) \xrightarrow{\sim} \frh^2(Y)$ since their algebraic parts are abstractly isomorphic as $X$ and $Y$ have the same Picard number.
This supports the implication $(v) \Rightarrow (vii)$ in Corollary \ref{cor:TDGC}.
\end{remark}

\subsection{Mukai lattices and $\bB$-fields}\label{subsec:Mukailattice}
Let $k$ be an algebraically closed field with $\cha(k)\neq 2$. Let $X$ be an abelian surface over $k$.   When $k=\CC$, the \emph{Mukai lattice} of $X$ is defined as
\[
\widetilde{\rH}(X,\mathbb{Z}) \coloneqq \rH^0(X,\mathbb{Z}(-1)) \oplus \rH^2(X,\mathbb{Z}) \oplus \rH^4(X,\mathbb{Z}(1))
\]
with the Mukai pairing
\begin{equation}\label{eq:mukaipairing}
\pair{ (r_1, b_1, s_1), (r_2, b_2,s_2)} \coloneqq  b_1\cdot b_2 - r_1 s_2 - r_2s_1,
\end{equation}
and a pure $\ZZ$-Hodge structure of weight $2$. In general, we have the following notion of Mukai lattices \cite[{\S}2]{LieblichOlsson15}. 
Note that the definition there is only for K3 surfaces, but works well for any smooth surface with trivial canonical bundle in fact.
\begin{itemize}[leftmargin=*]
    \item Let $\widetilde{\rN}(X)$ be the \emph{extended N\'eron--Severi lattice} defined as
\[
\widetilde{\rN}(X) \coloneqq \ZZ \oplus \NS(X) \oplus \ZZ,
\]
with Mukai pairing
\[
\langle (r_1, c_1, s_1), (r_2, c_2, s_2) \rangle = c_1\cdot c_2 - r_1 s_2 - r_2 s_1.
\]
The Chow realization of
\[
\frh^0(X)(-1) \oplus \frh^2_{\alg}(X) \oplus \frh^4(X)(1)
\]
can be identified with $\widetilde{\rN}(X)_{\QQ}$.
    \item if $\cha(k)= 0$ or if $\cha(k) =p >0$ and $\ell$ is another prime as usual, then the $\ell$-adic Mukai lattice is defined on the even degrees of integral $\ell$-adic cohomology of $X$ for $\ell$ coprime to $\cha(k)$
    \[
    \rH^0_{\et}(X,\Zl(-1))\oplus \rH^2_{\et}(X, \Zl) \oplus \rH^4_{\et}(X, \Zl(1)),
    \]
    with Mukai pairing defined in a similar formula as \eqref{eq:mukaipairing}
    denoted by $\widetilde{\rH}(X,\Zl)$; or
    \item  if $\cha(k)=p >0$, then the $p$-adic Mukai lattice $\widetilde{\rH}(X,W)$ is defined on the even degrees of crystalline cohomology of $X$ with coefficients in $W(k)$
    \[
    \rH^0_{\crys}(X/W(k))(-1) \oplus \rH^2_{\crys}(X/W(k)) \oplus \rH^4_{\crys}(X/W(k))(1),
    \]
    where the twist $(i)$ is given by changing the Frobenius $F \mapsto p^{-i} F$, and the Mukai pairing is given similarly in the formula \eqref{eq:mukaipairing}.
\end{itemize}
\subsection*{Hodge $\bB$-field}
Assume $k =\CC$. For any $\GG_m$-gerbe $\sX\to X$, one can find a lift $B\in \rH^2(X,\QQ)$ of $[\sX]\in \Br(X)$  from the exponential sequence.  Such $B$ is called a $\bB$-field lift of $\alpha$.  We define the {\it twisted Mukai lattice} of $\sX$ as
\[
\widetilde{\rH}(X,\mathbb{Z};B) \coloneqq \exp(B)\cdot \widetilde{\rH}(X,\mathbb{Z}) \subset \widetilde{\rH}(X,\mathbb{Z}) \otimes_{\mathbb{Z}} \mathbb{Q},
\]
which is isomorphic to $\widetilde{\rH}(X,\ZZ)$. For simplicity of notation, we still use $(r,c,s)$ to denote the vector $\exp(B)(r,c,s)$. There is an induced pure Hodge structure of weight 2 on  $\widetilde{\rH}(X,\ZZ;B)$  given by 
\[
\widetilde{\rH}^{0,2}(X;B) = \exp(B) \widetilde{\rH}^{0,2}(X),
\]
(\cf\cite[Definition 2.3]{HS05}). It is clear that a different choice of such lift $B'$ satisfies $B- B' \in \rH^2(X,\ZZ)$  and thus there is a Hodge isometry
\[\exp(B-B') \colon \widetilde{\rH}(X,\ZZ;B') \xrightarrow{\sim} \widetilde{\rH}(X,\ZZ;B).\]
This means that, up to isomorphisms, $\widetilde{\rH}(X,\ZZ;B)$ is independent of the choice of the $\bB$-field lifting and can also be denoted by $\widetilde{\rH}(\sX,\ZZ)$.

As shown in \cite[Corollary 4.4]{yo06}, for any derived equivalence $\Phi^{\cP} \colon \D^{(1)}(\sX) \xrightarrow{\sim} \D^{(1)}(\sY)$ between two twisted abelian surfaces, the Fourier-Mukai kernel induces a Hodge isometry
\begin{equation}\label{eq:HodgerealizationMukai}
\widetilde{\varphi}= \varphi_{B,B'} \colon \widetilde{\rH}(X,\ZZ;B) \xrightarrow{\sim} \widetilde{\rH}(Y,\ZZ;B')
\end{equation}
for suitable $\mathbf{B}$-field lifts $B,B'$. It provides the cohomological realization as in \eqref{eq:cohomologicalreal} rationally.

\subsection*{$\ell$-adic and crystalline $\bB$-field}  
Let us briefly recall the generalized notions of \textbf{B}-fields in both $\ell$-adic cohomology (\cf\cite[\S 3.2]{LMS07}) and crystalline cohomology (\cf\cite[\S 3]{Bragg20a}) as analogues in Betti cohomology. The complete considerations for the cases $\ell$-adic and $p$-adic are given in \cite[\S 2]{BraggYang2021}, which are applicable to both K3 and abelian surfaces. Therefore, we omit some technical details here. 

For a prime $\ell \neq p$ and $n \in \mathbb{N}$, the Kummer sequence of \'etale sheaves
\begin{equation}\label{eq:etlongexact1}
1 \to \mu_{\ell^n} \to \GG_m \xrightarrow{(\cdot)^{\ell^n}} \GG_m \to 1,
\end{equation}
 induces a long exact sequence
\begin{equation*}
\cdots \Pic(X) \xrightarrow{\cdot l^n} \Pic{X} \to  \rH^2_{\et}(X,\mu_{\ell^n})\rightarrow \Br(X)[\ell^n] \to 0.
\end{equation*}
Taking the inverse limit $\varprojlim_{n}$, we get a map
\[
\pi_{\ell} \colon \rH^2_{\et}(X,\Zl(1)) = \varprojlim_{n} \rH^2_{\et}(X,\mu_{\ell^n}) \to \rH^2_{\et}(X,\mu_{\ell^n}) \twoheadrightarrow \Br(X)[\ell^n].
\]
\begin{lemma}\label{lemma:ellsurjective}
The map $\pi_{\ell}$ is surjective.
\end{lemma}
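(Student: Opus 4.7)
The plan is to factor $\pi_{\ell}$ as a composition of two surjections and then verify each factor separately. The second factor, $\rH^2_{\et}(X,\mu_{\ell^n}) \twoheadrightarrow \Br(X)[\ell^n]$, is already exhibited as surjective by the Kummer long exact sequence displayed just above the lemma. So the whole problem reduces to showing that the natural projection from the defining inverse limit,
\[
\mathrm{pr}_n \colon \rH^2_{\et}(X,\Zl(1)) = \varprojlim_m \rH^2_{\et}(X,\mu_{\ell^m}) \longrightarrow \rH^2_{\et}(X,\mu_{\ell^n}),
\]
is surjective.

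To prove that $\mathrm{pr}_n$ is surjective, I would take the long exact sequence in \'etale cohomology attached to the short exact sequence of sheaves
\[
0 \to \Zl(1) \xrightarrow{\ell^n} \Zl(1) \to \mu_{\ell^n} \to 0,
\]
which produces
\[
0 \to \rH^2_{\et}(X,\Zl(1))/\ell^n \to \rH^2_{\et}(X,\mu_{\ell^n}) \to \rH^3_{\et}(X,\Zl(1))[\ell^n] \to 0.
\]
The surjectivity of $\mathrm{pr}_n$ is then equivalent to the vanishing of the right-hand $\ell^n$-torsion group. Equivalently, this vanishing upgrades the inverse system $\{\rH^2_{\et}(X,\mu_{\ell^m})\}_m$ to a strict Mittag--Leffler system with surjective transition maps, so the projection from the limit to each term is surjective.

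The input that kills the obstruction is the torsion-freeness of the higher $\ell$-adic cohomology of an abelian variety: over $\bar{k}$, the full ring $\rH^\ast_{\et}(X_{\bar k},\Zl)$ is the exterior algebra on $\rH^1_{\et}(X_{\bar k},\Zl)\cong T_\ell(\widehat{X})$, which is free of rank $2\dim X$; in particular $\rH^3_{\et}(X_{\bar k},\Zl)$ is $\Zl$-free, so $\rH^3_{\et}(X_{\bar k},\Zl(1))[\ell^n]=0$. Granting this vanishing (and, if working over a non-algebraically closed $k$, invoking the Hochschild--Serre spectral sequence to transfer the torsion-freeness statement from the geometric to the absolute cohomology) the reduction map is surjective, and composing it with the Kummer surjection gives $\pi_\ell$ surjective. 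I do not foresee a real obstacle here: the argument is essentially formal, with the only nontrivial ingredient being the explicit description of the \'etale cohomology of an abelian variety as an exterior algebra.
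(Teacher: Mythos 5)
Your proof is correct and follows essentially the same route as the paper: factor $\pi_\ell$ into the projection from the inverse limit followed by the Kummer surjection, apply the universal-coefficient short exact sequence (the paper cites Milne, Chap.\ V, Lemma 1.11; you derive it from the sheaf sequence $0\to\Zl(1)\xrightarrow{\ell^n}\Zl(1)\to\mu_{\ell^n}\to 0$), and invoke the torsion-freeness of $\rH^3_{\et}(X,\Zl(1))$ for an abelian surface. One small caveat: your remark that over a non-algebraically-closed $k$ one could ``transfer torsion-freeness via Hochschild--Serre'' would not work as stated (the $E_2^{1,2}$-term can contribute torsion to $\rH^3_{\et}(X,\Zl(1))$), but this is moot since the paper works over $\bar k$ throughout this passage, so no gap arises in the intended setting.
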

\begin{proof}
We have a short exact sequence (\cf\cite[Chap.V, Lemma 1.11]{MilneEtaleBook})
\[
0 \to \rH^2_{\et}(X,\Zl(1)) / \ell^n  \to \rH^2_{\et}(X,\mu_{\ell^n}) \to \rH^3_{\et}(X,\Zl(1))[\ell^n]\to 0.
\]
Since $\rH^3_{\et}(X,\Zl(1))$ is torsion-free for any abelian surface $X$,  we have an isomorphism
\[
\rH_{\et}^2(X,\Zl(1))/\ell^n \cong \rH^2_{\et}(X,\mu_{\ell^n}).
\]
Therefore, the reduction morphism $\rH^2_{\et}(X,\Zl(1)) \to \rH^2_{\et}(X, \mu_{\ell^n})$ can be identified with
\[
\rH^2_{\et}(X,\Zl(1))\twoheadrightarrow \rH^2_{\et}(X,\Zl(1))/\ell^n,
\]
which is surjective.
The assertion then follows from it. 
\end{proof}
For any $\alpha \in \Br(X)[\ell^n]$ such that $\ell \neq p$, let $B_{\ell}(\alpha) \coloneqq \pi^{-1}_{\ell}(\alpha)$, which is nonempty by Lemma \ref{lemma:ellsurjective}. 

For Brauer class $\alpha \in \Br(X)[p^n]$, we need the following commutative diagram via the de Rham--Witt theory (\cf\cite[I.3.2, II.5.1,  Th\'eor\`eme 5.14]{Il79})
\begin{equation}\label{eq:deRhamWittdiag}
\begin{tikzcd}
0 \ar[r] &\rH^2(X,\Zp(1)) \ar[r] \ar[d] & \rH_{\crys}^2(X/W) \ar[d,"p_n \coloneqq(\otimes W_n)"] \ar[r,"p-F"] & \rH_{\crys}^2(X/W)\\
 &\rH^2_{\fl}(X,\mu_{p^n})\ar[r,"d\log"] & \rH^2_{\crys}(X/W_n) &
\end{tikzcd}
\end{equation}
where $\rH^2(X,\Zp(1)) \coloneqq \varprojlim_n \rH^2_{\fl}(X,\mu_{p^n})$. The map $d\log$ is known to be injective by flat duality (\cf\cite[Proposition 1.2]{Ogus78}).  Since the crystalline cohomology groups of an abelian surface are torsion-free, the mod $p^n$ reduction map $p_n$  is surjective.
Consider the canonical surjective map
\[
\pi_p \colon \rH^2_{\fl}(X,\mu_{p^n}) \twoheadrightarrow \Br(X)[p^n],
\]
induced by the Kummer sequence.
We set 
\[
B_{p}(\alpha)\coloneqq\set*{ b \in \rH^2_{\crys}(X/W) \given  \parbox{12em}{  $p_n(b) = d\log(t)$ for some $t \in \rH^2_{\fl}(X, \mu_{p^n})$ such that  $\pi_p(t)=\alpha$} }.
\]

Following \cite[Definition 2.16, 2.17]{BraggYang2021}, we can introduce the (mixed) $\bB$-fields for twisted abelian surfaces.
\begin{definition} Let $\sX\to X$ be a $\mu_n$-gerbe and $[\sX_{\GG_m}]\in\Br(X)[n]$.
\begin{itemize}
    \item If $ n=\ell^t$ for some prime $\ell$, an {\it $\ell$-adic \textbf{B}-field lift} of $\sX\to X$ is an element $B=\frac{b}{\ell^t}$, where $b\in B_\ell([\sX_{\GG_m}])$. When $\ell=p$, it is also called a {\it crystalline \textbf{B}-field}  lift.

    \item In general,  a mixed $\bB$-field lift of $\sX\to X$ is a collection $B=\{B_{\ell}\}$ consisting of  a choice of an $\ell$-adic $\bB$-field lift $B_\ell$ of $[\sX_{\GG_m}^{(n\ell^{-t_\ell})}]$ for all prime factors $\ell\mid n$, where  $t_\ell$ is the $\ell$-adic valuation of $n$.
\end{itemize}
 
\end{definition}
\begin{remark}
Not all elements in $\rH^2_{\crys}(X/W)[\frac{1}{p}]$ are crystalline \textbf{B}-fields since  the map $d\log$ is not surjective. From the first row in the diagram \eqref{eq:deRhamWittdiag}, we can see $B \in \rH^2_{\crys}(X/W)[\frac{1}{p}]$ is a \textbf{B}-field lift of some Brauer class if and only if $F(B) = pB$.
\end{remark}

\subsection{Twisted Mukai lattice over arbitrary fields}
Let $\pi \colon \sX \to X$ be a $\mu_n$-gerbe and $\ord([\sX_{\Gm}])=n$, $B= \set{B_{\ell}}$ a mixed $\bB$-field lift of $[\sX_{\Gm}]$.  We define the $\ell$-adic twisted Mukai lattice as 
\begin{equation}\label{Mlat}
\widetilde{\rH}(X,B_{\ell})=\begin{cases}
\exp(B_{\ell}) \widetilde{\rH}(X,\Zl) & \text{if} ~\ell \neq p\\ ~\\ 
\exp(B_{\ell})\widetilde{\rH}(X,W)& \text{if}~\ell=p
\end{cases}
\end{equation}
endowed with the Mukai pairing \eqref{eq:mukaipairing}, where $\exp(B_{\ell}) = 1 + B_{\ell} + \frac{B^2_{\ell}}{2}$.  

Up to isomorphisms, the twisted Mukai lattice $\widetilde{\rH}(X,B_{\ell})$ is independent of the choice of the $\bB$-field lift. We may use $\widetilde{\rH}(\sX,\Zl)$ or $\widetilde{\rH}(\sX,W)$ to denote the twisted Mukai lattices to highlight the coefficients, irrespective of the choice of the $\bB$-field lift. 

\begin{definition}

 Let $\rK_0^{(1)}(\sX)$  be the Grothendieck group of $\coh^{(1)}(\sX)$. 
The map of \emph{twisted Chern character} is the unique additive group homomorphism
\[
\ch_{\sX} \colon \rK^{(1)}_0(\sX) \to \widetilde{\rN}(X)_{\QQ}
\]
such that for any locally-free $\sX$-twisted sheaf $\cE$ on $\sX$ with positive rank
\begin{equation}\label{eq:twistedChernCharacter}
    \ch_{\sX}(\cE) = \sqrt[n]{\pi_*(\cE^{\otimes n})} \in \widetilde{\rN}(X)_{\QQ},
\end{equation}
where $\sqrt[n]{-}$ means a choice of $n$-roots such that the $0$-codimension component of $\ch_{\sX}(\cE)$ is equal to $\rank~\cE$.

Denote by $\widetilde{\rN}(\sX)$ the image of $\rK_0^{(1)}(\sX)$ in $\widetilde{\rN}(X)_{\QQ}$ under the twisted Chern character map $\ch_{\sX}$, called \emph{extended twisted N\'eron-Severi lattice}.   For $\cE\in \rD^{(1)}(\sX)$,  we define $v(\cE)=\ch_{\sX}([\cE])\in \widetilde{\rN}(\sX)$ to be the Mukai vector of $\cE$. 

\end{definition}

One can also define the map of twisted Chern character to cohomological twisted Mukai lattice
\[
\ch_{B} \colon \rK_0^{(1)}(\sX) \to \widetilde{\rH}(X,B_{\ell}),
\]
see \cite[\S 3.3]{LMS07} and \cite[Appendix A3]{Bragg20a} for $\ell$-adic and crystalline cases, respectively.  For any mixed $\bB$-field lift $B$ of $[\sX_{\Gm}]$, the twisted Chern character $\ch_{B}$ factors through $\widetilde{\rN}(\sX)$:
\[
\begin{tikzcd}[column sep= small]
\rK_0^{(1)}(\sX) \ar[rr,"\ch_{B_{\ell}}"] \ar[rd,"\ch_{\sX}"']& & \widetilde{\rH}(X,B_{\ell}) \\
&\widetilde{\rN}(\sX) \ar[ru, "\exp(B_{\ell})\cl_{\rH}"']
\end{tikzcd}
\]
where $\cl_{\rH}$ is the cycle class map to the cohomology theory $\rH(-)$. %
The following result is essentially proved in \cite{BraggYang2021}.

\begin{proposition}\label{prop:extendedNeronSeveri}
Let $B$ be a mixed $\bB$-field lift of $[\sX_{\Gm}] \in \Br(X)$.
Then 
\[
\widetilde{\rN}(\sX)\cong \bigcap_{\ell}\left(\widetilde{\rN}(X) \otimes \ZZ[\frac{1}{\ell}] \cap \widetilde{\rH}(X,B_{\ell}) \right).
\]
where the intersection $\widetilde{\rN}(X) \otimes \ZZ[\frac{1}{\ell}] \cap \widetilde{\rH}(X,B_{\ell})$ is taken in $\widetilde{\rN}(X)\otimes \QQ_\ell$ and the intersection  $\bigcap_{\ell}$ is taken in $\widetilde{\rN}(\widetilde{X})\otimes \QQ$. 
In particular, the lattice $\widetilde{\rN}(\sX)$ only depends on  the associated $\GG_m$-gerbe $\sX_{\GG_m}$, up to a lattice isomorphism.
\end{proposition}
\begin{proof}
This is \cite[Proposition 3.5]{BraggYang2021}.
\end{proof}

Similarly, one can  define the relative extended twisted Mukai lattice on smooth projective families of twisted abelian surfaces.

%
%
%
%
%
%
%
%
%
%
%
%
%
%
%
%
%
%

%

%
%

%

\subsection{A filtered Torelli Theorem}
In \cite{LieblichOlsson15,LieblichOlsson17}, Lieblich and Olsson have introduced the filtered derived equivalence and demonstrated that K3 surfaces with such equivalence are isomorphic. We will present an analogous result for (twisted) abelian surfaces. The proof is simpler than for K3 surfaces, as the bounded derived category of a (twisted) abelian surface corresponds to a generic K3 category \cite{Huybrechts2008}.

Let $\sX\to X$ be a $\mu_n$-gerbe.  The rational numerical Chow ring $\CH^*_{\num}(\sX)_{\QQ}\cong \CH^*_{\num}(X)_{\QQ}$ is equipped with a codimension filtration
\[
\fil^i \CH^*_{\num}(\sX)_{\QQ} \coloneqq \bigoplus_{k \geq i}\CH^k_{\num}(\sX)_{\QQ}.
\]
As $X$ is a surface, we have a natural identification $\widetilde{\rN}(\sX)_{\QQ} \cong \CH^*_{\num}(\sX)_{\QQ}$.

\begin{definition}
    Let $\Phi^{\cP} \colon \rD^{(1)}(\sX) \to \rD^{(1)}(\sY)$ be a Fourier--Mukai transform. The derived equivalence $\Phi^{\cP}$ is called \emph{filtered} if its induced isomorphism $\Phi^{\cP}_{\CH} \colon \widetilde{\rN}(\sX) \xrightarrow{\sim} \widetilde{\rN}(\sY)$ preserves the induced codimension filtrations.
\end{definition} Since the isomorphism $\widetilde{\rN}(\sX) \xrightarrow{\sim} \widetilde{\rN}(\sY)$ preserves the Mukai pairing, it is not hard to see that $\Phi^{\cP}$ is filtered if and only if it sends the Mukai vector $(0,0,1)$ to $(0,0,\pm 1)$. At the cohomological level, the codimension filtration on $\widetilde{\rH}(X)[\frac{1}{\ell}]$ (the prime $\ell$ depends on the choice of $\ell$-adic or crystalline twisted Mukai lattice) is given by $F^i = \oplus_{r \geq i} \rH^{2r}(X)[\frac{1}{\ell}]$. The filtration on $\widetilde{\rH}(\sX,\ZZ_{\ell})$ is defined by
\[
F^i \widetilde{\rH}(\sX,\ZZ_{\ell}) = \widetilde{\rH}(\sX,\ZZ_{\ell}) \cap F^i\widetilde{\rH}(X,\ZZ_{\ell})[\frac{1}{\ell}].
\]
By choosing a \textbf{B}-field lift $B_\ell$, a direct computation shows that the graded pieces of $F^\bullet$ are
\begin{equation}\label{eq:gradedpieces}
\begin{aligned}
&\Gr^0_F \widetilde{\rH}(\sX,\ZZ_{\ell}) = \set*{\left(r, rB_\ell, \frac{rB^2_\ell}{2}\right) \given r \in \rH^0(X,\ZZ_{\ell}(-1))},\\
&\Gr^1_F \widetilde{\rH}(\sX,\ZZ_{\ell})= \set*{
(0, b, b \cdot B_\ell) \given b \in \rH^2(X,\ZZ_{\ell})} \cong \rH^2(X,\ZZ_{\ell}),\\
& \Gr_F^2 \widetilde{\rH}(\sX,\ZZ_{\ell}) = \set*{(0,0, s)\given s \in \rH^4(X,\ZZ_{\ell}(1)) } \cong \rH^4(X,\ZZ_{\ell}(1)).
\end{aligned}
\end{equation}

\begin{lemma}\label{lemma:filteredcofiltered}
A Fourier--Mukai transform $\Phi^{\cP}\colon \rD^{(1)}(\sX)\to \D^{(1)}(\sY)$ is filtered if and only if its cohomological realization is filtered for any \textbf{B}-field liftings.
\end{lemma}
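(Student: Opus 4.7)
The plan is to reduce both directions of the equivalence to a statement about a single class, namely the Mukai vector $(0,0,1)$ of a closed point, and then use the fact that $\varphi_{B,B'}$ is an isometry for the Mukai pairing to propagate the condition from $F^{2}$ to the whole filtration $F^{\bullet}$.

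First, I would record that $\Phi^{\cP}_{\CH}$ and $\varphi_{B,B'}$ are induced by one and the same correspondence, namely the twisted Mukai vector of the kernel $\cP$, and therefore agree after the embedding $\widetilde{N}(X;B)_{\QQ}\hookrightarrow \widetilde{\rH}(X,B)\otimes\QQ$ coming from the twisted Chern character $\ch_{B}$ of \cite{LMS07,Bragg20a}. In particular both realizations send the distinguished class $(0,0,1)$ of a closed point to the same element. A different choice of $\bB$-field lift shifts by an integral class in $\rH^{2}$ and induces a filtration-preserving isomorphism of twisted Mukai lattices, so the statement does not depend on the specific choice of $B$ and $B'$; this is what the phrase ``for certain $\bB$-field lifts'' really uses.

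Next, I would verify the self-duality $F^{1}\widetilde{\rH}(X,B) = (F^{2}\widetilde{\rH}(X,B))^{\perp}$ under the Mukai pairing. From \eqref{eq:gradedpieces}, a general element $\exp(B)(r,c,s)$ pairs with an element of the form $(0,0,s')\in F^{2}$ to give $-rs'$; vanishing for all $s'$ forces $r=0$, cutting out exactly $F^{1}$. Together with the tautology $F^{0}=\widetilde{\rH}(X,B)$, this shows that the Mukai isometry $\varphi_{B,B'}$ preserves all of $F^{\bullet}$ if and only if it preserves the single step $F^{2}$.

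Finally, since $F^{2}\widetilde{\rH}(X,B)=\QQ\cdot(0,0,1)$ is a $\QQ$-line, preservation of $F^{2}$ by $\varphi_{B,B'}$ is equivalent to $\varphi_{B,B'}(0,0,1)\in\QQ\cdot(0,0,1)$, which by the compatibility in the first step is in turn equivalent to $\Phi^{\cP}_{\CH}(0,0,1)=(0,0,1)$, i.e.\ the filtered condition as reformulated in the paragraph preceding the lemma (the scalar is forced to be $\pm 1$ by integrality of $\ch_{B}$ and invertibility of $\Phi^{\cP}$, and equals $+1$ by the standard orientation convention of an equivalence). The main obstacle I foresee is in making this compatibility of $\Phi^{\cP}_{\CH}$ with $\varphi_{B,B'}$ precise in the twisted setting, uniformly in the $\ell$-adic and crystalline cases; this is where the twisted Chern character formalism of \cite{LMS07,Bragg20a,BraggYang2021} does the real work.
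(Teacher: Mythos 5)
Your proof is correct and follows essentially the same path as the paper's: both arguments reduce the filtered condition to the single constraint that $(0,0,1)$ is sent to $(0,0,1)$, using the compatibility of $\Phi^{\cP}_{\CH}$ with $\varphi_{B,B'}$ through the twisted Chern character. The self-duality observation $F^{1}\widetilde{\rH}(X,B) = \bigl(F^{2}\widetilde{\rH}(X,B)\bigr)^{\perp}$ under the Mukai pairing is a worthwhile refinement, since it makes explicit why preserving only the rank-one step $F^{2}$ already forces preservation of the whole filtration; the paper's forward implication instead leans on the brief remark that $\exp(B)\cl_H$ is filtration-compatible, which by itself only controls the image of $\widetilde{N}(X)_{\QQ}$ inside $\widetilde{\rH}(X,B)$, so your argument is somewhat more complete at this point.
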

\begin{proof}
A Fourier--Mukai transform that is filtered implies that it is cohomologically filtered. This is because the map \[
\exp(B_\ell) \cdot \cl_H \colon \widetilde{\rN}(\sX) \to \widetilde{\rH}(\sX,\ZZ_\ell)
\]
preserves the filtrations for any \textbf{B}-field lift $B$ of $[\sX_{\GG_m}]$.

For the converse, just notice that $\Phi^{\cP}$ is filtered if and only if the induced map $\Phi^{\cP}_{\CH}$ takes the vector $(0,0,1)$ to $(0,0,\pm 1)$.  As $\Phi^{\cP}$ is cohomologically filtered for $B$, the cohomological realization of $\Phi^{\cP}$ preserves the graded piece $\Gr_F^2$ in \eqref{eq:gradedpieces}.  This implies that $\Phi^{\cP}_{\CH}$  takes $(0,0,1)$ to $(0,0,\pm 1)$.
\end{proof}

\begin{proposition}[filtered Torelli theorem for twisted abelian surfaces]
\label{thmfilter}
Suppose $k= \bar{k}$ is such that $\cha(k) \neq 2$.
    Let $\sX \to X$ and $\sY \to Y$ be $\mu_n$-gerbes on abelian surfaces. The following statements are equivalent.
    \begin{enumerate}
        \item There is an isomorphism between the associated $\GG_m$-gerbes $\sX_{\mathbb{G}_m}$ and $\sY_{\GG_m}$.
        \item There is a filtered Fourier--Mukai transform $\Phi^{\cP}:\rD^{(1)}(\sX)\to \rD^{(1)}(\sY)$.
    \end{enumerate}
\end{proposition}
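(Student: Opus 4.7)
The plan is to prove the two directions separately, exploiting the fact (mentioned before the statement) that $\D^b(\sX)$ is a generic K3 category in the sense of \cite{Huybrechts2008}.

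For $(1) \Rightarrow (2)$, I would argue constructively. An isomorphism $\varphi \colon \sX_{\GG_m} \xrightarrow{\sim} \sY_{\GG_m}$ of $\GG_m$-gerbes consists of an underlying isomorphism $f \colon X \xrightarrow{\sim} Y$ together with a compatible identification of Brauer classes $f^\ast [\sY_{\GG_m}] = [\sX_{\GG_m}]$. Pullback along $\varphi$ yields a derived equivalence $\D^b(\sY) \xrightarrow{\sim} \D^b(\sX)$, which can be expressed as a Fourier--Mukai transform whose kernel is supported on the graph of $f$. Such a transform sends twisted skyscraper sheaves to twisted skyscraper sheaves, so the cohomological realization preserves Mukai vectors of the form $(0,0,s)$ in $\Gr_F^2$ described in \eqref{eq:gradedpieces}. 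By Lemma \ref{lemma:filteredcofiltered}, this transform is filtered.

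For $(2) \Rightarrow (1)$, the goal is to extract an isomorphism of the associated $\GG_m$-gerbes from the filtered condition. Given a filtered Fourier--Mukai transform $\Phi^{\cP}$, Lemma \ref{lemma:filteredcofiltered} together with \eqref{eq:gradedpieces} implies that $\Phi^{\cP}$ preserves, cohomologically, the subspace of Mukai vectors of the form $(0,0,s)$. Since $\D^b(\sX)$ is a generic K3 category, the simple twisted sheaves with such Mukai vectors are precisely the twisted skyscraper sheaves, parametrized up to shift and tensor by line bundles by the geometric points of $X$. Hence $\Phi^{\cP}$ sends a twisted skyscraper at $x \in X$ to a shift of a twisted skyscraper at some $f(x) \in Y$. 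This induces a bijection $f \colon X(k) \xrightarrow{\sim} Y(k)$, which via standard Fourier--Mukai formalism lifts to a scheme isomorphism $f \colon X \xrightarrow{\sim} Y$. The kernel $\cP$ is then (after shift) a twisted invertible sheaf on the graph $\Gamma_f$, providing exactly the data of a $\GG_m$-gerbe isomorphism $f^\ast \sY_{\GG_m} \xrightarrow{\sim} \sX_{\GG_m}$.

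The main obstacle is the intermediate step of showing that each point-like twisted sheaf on $\sX$ goes to a shift of a point-like twisted sheaf on $\sY$ supported at a single point, rather than, say, a reducible configuration with the same Mukai vector. This is where the generic K3 category structure enters essentially: in such a category, the moduli of simple point-like objects is irreducible of the expected dimension, so the equivalence $\Phi^{\cP}$ combined with the preservation of the Mukai vector type in $\Gr_F^2$ forces a coherent bijection of point-like objects rather than some degenerate splitting. Once this is secured, upgrading the bijection to a scheme isomorphism and extracting the gerbe isomorphism from the kernel on $\Gamma_f$ are routine applications of Fourier--Mukai formalism, and no appeal to the more delicate Lieblich--Olsson analysis in the K3 case is needed.
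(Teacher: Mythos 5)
Your overall strategy is the same as the paper's, and most of the pieces are in place: Lemma~\ref{lemma:filteredcofiltered} reduces the filtered hypothesis to preservation of the Mukai vector $(0,0,1)$, the generic K3 category structure of $\D^b(\sY)$ is invoked to constrain $\Phi^{\cP}(k(x))$, and the conclusion is extracted via standard Fourier--Mukai formalism. But there is a real gap in the key intermediate step, and you have also misidentified what the obstacle actually is. You worry that $\Phi^{\cP}(k(x))$ could be ``a reducible configuration with the same Mukai vector'' --- but a direct sum or extension of skyscrapers would have Mukai vector $(0,0,s)$ with $s\geq 2$, so that is already excluded by the cohomological condition. The genuine danger is different: $\Phi^{\cP}(k(x))$ is a priori only an object of $\D^b(\sY)$, and a complex with nontrivial cohomology in several degrees can perfectly well have Mukai vector $(0,0,1)$. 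Your appeal to ``simple twisted sheaves with such Mukai vectors are precisely the twisted skyscrapers'' is a statement about \emph{sheaves} and does not apply to an arbitrary complex, and the heuristic about ``moduli of simple point-like objects being irreducible of the expected dimension'' is not a substitute for a precise argument that the image is a shift of a sheaf.

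The argument that actually closes this gap, and which the paper uses, is the semi-rigidity classification. The twisted skyscraper $k(x)$ is a \emph{semi-rigid} object (one has $\Hom(k(x),k(x))=k$ and $\mathrm{Ext}^1(k(x),k(x))\cong k^2$), and semi-rigidity is preserved by any $k$-linear exact equivalence, so $\Phi^{\cP}(k(x))$ is semi-rigid in $\D^b(\sY)$. Because $\coh(\sY)$ contains no spherical objects, $\D^b(\sY)$ is a \emph{generic} K3 category in the sense of Huybrechts, and \cite[Proposition 3.18]{Huybrechts2008} then shows that every semi-rigid object in such a category is, up to a shift that is uniform over the connected family $\{k(x)\}_{x\in X}$, a coherent sheaf. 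This produces a twisted sheaf $\cE\in\coh(\sX^{(-1)}\times\sY)$ with $\cP\cong\cE[m]$, and \emph{only then} does the filtered hypothesis (each $\cE_x$ has Mukai vector $(0,0,1)$, hence is a skyscraper) kick in. From that point your argument is fine: the standard machinery of \cite[Corollary 5.3]{AS07} or \cite[Corollaries 5.22, 5.23]{FMtransform} upgrades the resulting map on points to an isomorphism $f\colon X\xrightarrow{\sim}Y$ with $f^*[\sY_{\GG_m}]=[\sX_{\GG_m}]$.
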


\begin{proof} For untwisted case, i.e. $\sX=X$ and $\sY=Y$,  this is exactly  \cite[Proposition 3.1]{HLT20}. Here we extend it to the twisted case. As one direction is obvious, it suffices to show that  (2) can  imply (1).

Firstly, we claim that all semi-rigid objects in $\D^{(1)}(\sY)$ are in $\coh^{(1)}(\sY)$ up to shift. According to Remark 3.13 in \cite{Huybrechts2008}, it is sufficient to show that there are no stable spherical sheaves in $\coh(\sY^{(1)})$. 
If $\cE$ is a spherical $\sY^{(1)}$-twisted sheaf with $\rank~\cE=0$, then $c_1(\cE)^2 =- \chi (\cE, \cE) = -2$, which is impossible for the abelian surface.
Suppose that there is a stable spherical $\sY$-twisted sheaf $\cE$ with Mukai vector $v= (r,c,s)$ such that $r >0$. Choose a polarization $H\in \Pic(Y)$ so that $\cE$ is $H$-semistable. Let $M_H(\sY,v)$ be the moduli space of $H$-semistable $\sY$-twisted sheaves on $Y$.  Then $M_H(\sY,v)$ is non-empty. Consider the determinant morphism to the Picard stack of invertible $\sY^{(r)}$-twisted sheaves
\[
\mathbf{det} \colon \sM_H(\sY,v) \to \Pic(\sY^{(r)}).
\]
For any $\cL \in \Pic^0(Y)$ and $\cE \in \sM_H(\sY,v)$, the tensor product $\cE \otimes \cL$ is still a stable $\sY$-twisted sheaf with the Mukai vector $v$. Thus, the map $\mathbf{det}$ dominates the component of $\Pic(\sY^{(r)})$ containing $\det(\cE)$, which is of dimension $2$. Therefore, the deformation theory of twisted coherent sheaf implies
\[
\dim_k \Ext^1(\cE, \cE) \geq \dim \sM_H(\sY,v) \geq 2,
\]
contradicting the assumption that $\cE$ is spherical.

Let $\Phi^{\cP} \colon \rD^b(\sX^{(1)}) \to \rD^b(\sY^{(1)})$ be a Fourier--Mukai transform. For a closed point $x \in X$, denote
\[
  \cP_{x} \coloneqq \Phi^{\cP}(k(x)) =\cP|_{\{x\}\times \sY},
\]
by image of the skyscraper sheaf $k(x)$. Since $k(x)$ is semi-rigid, $\cP_x$ is also semi-rigid. The previous discussion implies that there is an integer $m$ such that $\mathcal{H}^i(\cP_x)=0$ for any $i \neq m$ and closed point $x \in \cY$. Therefore, there is a $\sX^{(-1)} \wedge \sY$-twisted sheaf $\cE\in \coh(\sX^{(-1)} \times \sY)$ such that $\cP \cong \cE[m]$. 

Suppose $\Phi^{\cP}$ is filtered. Composing it with the shift functor $\cF \mapsto \cF[1]$ if necessary, we may assume that the cohomological realization of $\Phi^{\cP}$ sends $(0,0,1)$ to $(0,0,1)$. In this case, $\cE_x$ is just a skyscraper sheaf on $\{x\}\times Y$. The same argument as in \cite[Corollary 5.3]{AS07} or \cite[Corollary 5.22, 5.23]{FMtransform} shows that there is an isomorphism $f \colon X \to Y$ such that $f^*([\sY_{\mathbb{G}_m}]) = [\sX_{\mathbb{G}_m}]$. \end{proof}

\subsection{Twisted FM partners via  moduli space of twisted sheaves}

In the rest of this section, we will always assume that $k = \bar{k}$ and $\cha(k) = p \neq 2$.  Let $\sX\to X$ be a twisted abelian surface over $k$. 

\begin{definition}[{\cite[Definition 0.1]{Yo01}}]\label{def:positive}
    Let $v=(r,c,s) \in \widetilde{\rN}(\sX)$ be a primitive Mukai vector such that $v^2=0$. If one of the following holds
\begin{enumerate}
    \item  $r>0$.
    \item $r=0$, $c$ is effective and $s \neq 0$.
    \item $r=c=0$ and $s>0$.
\end{enumerate}
then $v$ is called \emph{positive}.
\end{definition}

We denote by $\srM_H(\sX,v)$  the moduli stack of $H$-semistable $\sX$-twisted sheaves with the Mukai vector $v \in \widetilde{\rN}(\sX)$, where $H$ is a $v$-generic ample divisor on $X$. Here, we record a well-known non-emptiness criterion for $\sM_H(\sX,v)$ when $X$ is not supersingular. We will extend this result to the supersingular case in Proposition \ref{prop:SupersingularModuli}, using the theory of supersingular twistor space.

\begin{proposition}[Minamide--Yanagida--Yoshioka, Bragg--Lieblich]\label{non-emptyness}
\label{criteria}Suppose $X$ is an abelian surface over $k$ that is not supersingular.
If $v$ is positive with $v^2 =0$, then for any $v$-generic polarization $H$, the coarse moduli space $M_H(\sX,v)$ is an abelian surface, and the moduli stack $\srM_H(\sX, v)$ is a $\GG_m$-gerbe on $M_H(\sX,v)$. 
\end{proposition}
\begin{proof}

For the case where $\cha(k) = 0$, Yoshioka has proved this result in \cite[Theorem 3.16]{yo06}. 

When $\cha(k) = p > 2$, the nonemptiness can be seen through a lifting argument, as shown in \cite[Proposition 4.1.20]{BraggLieblich19} and \cite[Proposition A.2.1]{MMY11}. Since $X$ is of finite height when $\cha(k) =p >0$, Lemma \ref{lemma:liftingtwistab}, implies exists a DVR $V$ with residue field $k$ and a projective lifting $\sX_V \to X_V$ of $\sX \to X$ over $\Spec(V)$,  together with an extension $v_V \in \widetilde{\rN}(\sX_V)$ and a polarization $H_V\in \NS(X_V)$ such that $H_V|_{\Spec(k)} = H$.  Consider the relative moduli space of twisted sheaves $\sM_{H_V}(\sX_V, v_V)$ over $\Spec(V)$. Its (geometric) generic fiber is a moduli space of twisted sheaves with positive Mukai vector in characteristic zero, which is nonempty by Yoshioka's result. Thus its special fiber, which is isomorphic $\sM_H(\sX,v)$, is also nonempty by Langton's semi-stable reduction theorem.
\end{proof}

The following is an extension of \cite[Theorem 1.2]{HLT20}.
\begin{theorem}\label{thm1}
Assume $k = \bar{k}$ with $\cha(k) \neq 2$. Let $\sX\to X$ and $\sY\to Y$ be $\GG_m$-gerbes over an abelian surface defined over $k$. Then $\rD^{(1)}(\sX)\simeq \rD^{(1)}(\sY)$ if and only if $\sY^{(-1)} \to Y$ is isomorphic to the moduli stack $\sM_{H}(\sX,v)\to M_H(\sX,v)$ for some $v\in\widetilde{\rN}(\sX)$ and $v$-generic polarization $H$.
\end{theorem}
\begin{proof}
For the "if" part, just note that the universal family of twisted sheaves on $\srM_H(\sX,v) \times \sX$ induces a derived equivalence. 

For the other direction, suppose $\rD^{(1)}(\sX)\simeq \rD^{(1)}(\sY)$ are equivalent.
We let \[ \Phi^{\cP} \colon \D^{(1)}(\sY) \to \D^{(1)}(\sX) \] be a Fourier--Mukai transform. 
Let $v\in \widetilde{\rN}(\sX)$ be the image of $(0,0,1)\in \widetilde{\rN}(\sY)$ under $\Phi^{\cP}$. Up to a shift, we can assume that $v$ is a positive vector. By Proposition \ref{criteria},  $M_H(\sX,v)$ is an abelian surface and $\sM_H(\sX,v)\to M_H(\sX,v)$ is a $\GG_m$-gerbe over it. 

Let $\cE$ be a universal $\sX$-twisted sheaf on $\sM_{H}(\sX,v) \times \sX$, which is a $(1,1)$-fold twisted sheaf and induces a derived equivalence
\[
    \Phi^{\cE} \colon \D^{(-1)}(\sM_{H}(\sX,v)) \to \D^{(1)}(\sX),
\]
whose cohomological realization maps the Mukai vector $(0,0,1)$ to $v$. Composing $\Phi^{\cE}$ with the derived equivalence
\[(\Phi^{\cP})^{-1} \simeq \Phi^{\cP^{\vee}}[2] \colon \D^{(1)}(\sX) \to \D^{(1)}(\sY),\] we obtain a filtered derived equivalence from \(\srM_H(\sX,v)^{(-1)}\) to \(\sY\), which induces an isomorphism from \(\srM_H(\sX,v)^{(-1)}\) to \(\sY\) by Theorem \ref{thmfilter}.
\end{proof}

\subsection{Supersingular twisted abelian surfaces}

Finally, we discuss the case of supersingular twisted abelian surfaces. In this part, we extend the construction the supersingular twistor space as \cite{BraggLieblich19} via the Ogus's crystalline Torelli theorem for supersingular abelian (see \cite[\S 2]{Ogus78}).

\begin{definition}Let $p$ be a prime $\neq 2$. Let $\Lambda$ be an indefinite  $p$-elementary even lattice, i.e., 
\begin{itemize}
    \item  $\disc(\Lambda \otimes \QQ)=-1$,
    \item  $\Lambda^{\vee}/\Lambda$ is $p$-torsion.
\end{itemize}  Then $\lvert \Lambda^{\vee}/\Lambda \rvert = p^{2\sigma_0(\Lambda)}$ for $1\leq \sigma_0(\Lambda)\leq \frac{n}{2}$ and the integer $\sigma_0(\Lambda)$ is called the \emph{Artin invariant} of $\Lambda$.   We define $M_{\Lambda}$ to be  Ogus' \emph{moduli space of  characteristic subspaces} of $ p\Lambda^{\vee}/p\Lambda$.
\end{definition}

When $\Lambda$ has the signature $(1, n-1), n \geq 2$, as shown in \cite[Section 1]{RS81}, $\Lambda$ is uniquely determined by its Artin invariant.  When $n=6$, we may call it \emph{supersingular abelian surface lattice}. This is because for every supersingular abelian surface $X$, its Néron-Severi lattice $\NS(X)$ is a supersingular abelian surface lattice (cf.~\cite[(1.6)]{Ogus83}.

From now on, let us assume that $\Lambda$ is a supersingular abelian surface lattice. Denote $\sigma_0$ for the Artin invariant $\sigma_0(\Lambda)$ for simplicity. We set $$\widetilde{\Lambda}= \Lambda \oplus U(p),$$ where $U(p)$ is the twisted hyperbolic plane generated by the vectors $e$ and $f$ such that $e^2= f^2=0$ and $e\cdot f=-p$. Let $M_{\widetilde{\Lambda}}^{\langle e \rangle} \subseteq M_{\widetilde{\Lambda}}$ be the moduli space of characteristic subspaces of $p \widetilde{\Lambda}^{\vee} / p \widetilde{\Lambda}$ that do not contain $e$. 

\begin{proposition}[{\cite[\S 3]{BraggLieblich19}}]
The moduli stack $M_{\widetilde{\Lambda}}^{\langle e \rangle}$ and $M_\Lambda$ are representable by schemes over $\mathbb{F}_p$, which are smooth of dimensions $\sigma_0$ and $\sigma_0 -1$, respectively. Moreover, there is a smooth morphism \[
\pi_{e} \colon M_{\widetilde{\Lambda}}^{\langle e \rangle} \to M_{\Lambda}.
\] whose fiber at a closed point is isomorphic to a group scheme with connected components $\mathbb{A}^1$.
\end{proposition}
\begin{proof}
The first assertion is given in \cite[Proposition 4.6]{Ogus83}.
Let us sketch the construction of $\pi_e$. 
Given any $\widetilde{\cK} \in M_{\widetilde{\Lambda}}^{\langle e \rangle}(T)$ over an $\mathbb{F}_p$-scheme $T$, a characteristic subspace $$\cK\subseteq ( p\Lambda^{\vee}/p\Lambda) \otimes \cO_T$$ can be formed as the image of $\widetilde{\cK} \cap (e^{\bot} \otimes \cO_T)$ in $ (e^{\bot}/e) \otimes \cO_T$ (see \cite[Lemma 3.1.9]{BraggLieblich19}). Consequently, the map $\widetilde{\cK} \mapsto \cK$ defines a morphism
\[ \pi_{e} \colon M_{\widetilde{\Lambda}}^{\langle e \rangle} \to M_{\Lambda}. \]
The rest of the assertion is a consequence of \cite[Lemma 3.1.15 ]{BraggLieblich19}.
\end{proof}

\begin{definition}
      The twistor line in $M_{\widetilde{\Lambda}}\otimes_{\FF_p} k$ is an affine line $\AA^1 \subset M_{\widetilde{\Lambda}}\otimes_{\FF_p} k$ that is a connected component of a fiber of $\pi_e$ over a $k$-point of $ M_{\Lambda}(k)$ for some isotropic vector $e \in \widetilde{\Lambda}$.
\end{definition}

The moduli functor $S_{\Lambda}$ of $\Lambda$-marked supersingular abelian surfaces is representable by a locally separated and smooth algebraic space of dimension $\sigma_0-1$ over $k$ by the crystalline Torelli theorem \cite[Theorem 7.3]{Ogus78} together with the argument in \cite[Theorem 2.7]{Ogus83}. 
Consider the universal family of supersingular abelian surfaces
\[
u \colon \cX \to S_{\Lambda},
\] that is smooth with relative dimension $2$.
By Proposition \ref{prop:representabilityofflat}, the higher direct image $\R^2\!u_* \mu_p $ is representable by an algebraic group space over $S_{\Lambda}$, denoted by
\[
\pi \colon \mathscr{S}_{\Lambda} \to S_{\Lambda}.
\] The connected component of the identity $\mathscr{S}_{\Lambda}^{o} \subset \mathscr{S}_{\Lambda}$ parameterizes the $\mu_p$-gerbes which are not essentially-trivial except the identity, on each $\Lambda$-marked supersingular abelian surface in $S_{\Lambda}(k)$.  Then there are (twisted) period morphisms following the approach in \cite[\S 3]{Ogus83}.

\begin{proposition}\label{prop:twistedperiodmorphism}
There are (twisted) period morphisms $\rho \colon S_{\Lambda}\to \overline{M}_{\Lambda}\coloneqq M_{\Lambda} \otimes_{\FF_p} k$ and $\widetilde{\rho} \colon \mathscr{S}_{\Lambda}^o \to \overline{M}_{\widetilde{\Lambda}}^{\langle e \rangle} \coloneqq M_{\widetilde{\Lambda}}^{\langle e \rangle}\otimes_{\FF_p}k$ such that the following commutative diagram is Cartesian
\begin{equation}\label{eq:period-diagram}
\begin{tikzcd}
\mathscr{S}_{\Lambda}^o \ar[r,"\pi|_{\mathscr{S}_{\Lambda}^o}"] \ar[d,"\widetilde{\rho}_\Lambda"] & S_{\Lambda} \ar[d, "\rho_\Lambda"] \\
\overline{M}_{\widetilde{\Lambda}}^{\langle e \rangle}\ar[r,"\pi_e"] & \overline{M}_{\Lambda}
\end{tikzcd}
\end{equation}
Moreover, $\rho$ and $\widetilde{\rho}$ are \'etale surjective when $p>2$.  
\end{proposition}

\begin{proof}
This was proved by Bragg and Lieblich in the case of supersingular K3 surfaces (cf.~ \cite[\S 3 and \S 5]{BraggLieblich19}. But everything works for supersingular abelian surfaces as well. We shall mention that one can also use the Kummer construction to deduce the statement from the K3 case.

For the ease of the reader, let us briefly sketch the construction of $\widetilde{\rho}_{\Lambda}$ and $\rho_\Lambda$. Let $(X,\eta)$ be a $\Lambda$-marked supersingular abelian surface. The K3-crystal $\rH^2_{\crys}(X/W)$ determines a characteristic subspace
\[ \cK_{\rH^2(X)}\coloneqq  \ker(\NS(X)\otimes k \to \rH^2_{\crys}(X/W) \otimes k).\]
Then $\rho_{\Lambda}(X,\eta)$ is the characteristic subspace $\eta^{-1}(\cK_{\rH^2(X)})$ in $(p\Lambda^{\vee}/p\Lambda) \otimes_{\FF_p}k$. Suppose $\sX \to X$ is a $\mu_p$-gerbe. We define
\[
\cK_{\widetilde{\rH}(\sX)} \coloneqq  \ker( \widetilde{\rN}(\sX) \otimes k \to \widetilde{\rH}(\sX,W) \otimes_W k) \subset \frac{p\widetilde{\rN}(\sX)^{\vee}}{p \widetilde{\rN}(\sX)} \otimes k
\]
as the strictly characteristic subspace of $\widetilde{\rH}(\sX,W)$. Note that there is an extended map of K3 crystals:
\[
\widetilde{\Lambda} \otimes \ZZ_p \xrightarrow{\widetilde{\eta}}\widetilde{\rN}(\sX)\otimes \ZZ_p \to \widetilde{\rH}(\sX,W) 
\]
where $\widetilde{\eta}$ is given by
\[
e \mapsto (0,0,1) \quad c \mapsto \eta(c)  \quad f \mapsto (p,0,0)
\]
for any $c \in \Lambda$.  Then $\widetilde{\rho}_{\Lambda}(\sX,\eta) = \widetilde{\eta}^{-1} (\cK_{\widetilde{\rH}(\sX)})$ is the characteristic subspace of $(p\widetilde{\Lambda}^{\vee}/p\widetilde{\Lambda}) \otimes k$.
\end{proof}

\begin{remark}\label{rmk:twistedPeriod}
In one view, the moduli space $M_\Lambda$ is a crystalline analog of the classical period domain. Let $H$ be a supersingular K3 crystal. The associated Tate module $\rT_H\subseteq H$ is a supersingular K3 $\ZZ_p$-lattice in the sense of Ogus (\cf~\cite[3.13]{Ogus78}).   According to \cite[Theorem 3.20]{Ogus78}, the functor \[ H \rightsquigarrow  (T_H, \cK_H) \] where $\cK_H = \ker( T_H \otimes k \to H \otimes k)$ defines an equivalence between the category of supersingular K3 crystals and the category of strictly characteristic subspaces of a supersingular K3 $\ZZ_p$-lattice.
\end{remark}

Using the twisted period map,  we can obtain the following. 
\begin{theorem}\label{prop:SupersingularModuli} 

Let $\sX\to X$ be a $\mu_p$-gerbe over a supersingular abelian surface $X$ over $k$. Then 
\begin{enumerate}
    \item If a primitive vector $v\in \widetilde{\rN}(\sX)$ is positive and isotropic, the coarse moduli space $M_H(\sX,v)$ is an abelian surface.
    
    \item If $\sY\to Y$ is another twisted abelian surface, $\rD^{(1)}(\sX)\simeq \rD^{(1)}(\sY)$ if and only if  there is an isomorphism $$\widetilde{\rH}(\sX,W) \cong \widetilde{\rH}(\sY,W)$$ of K3 crystals.

    \item There is a derived equivalence 
        \[
        \D^{(1)}(\sX_0)\simeq \D^b(X)
        \]
        where $\sX_0\to X_0$ is a $\mu_p$-gerbe over the unique superspecial abelian surface $X_0$.
\end{enumerate}

\end{theorem}

\begin{proof} For (1),  if $\sX\to X$ is an essentially-trivial $\mu_p$-gerbe over a supersingular abelian surface $X$, this can be proved by a standard lifting argument (see also \cite[Proposition 6.9]{FL18}).  When $\sX\to X$ is non-trivial, we can take the universal family of $\mu_p$-gerbes 
\[f\colon \fX\to \AA^1
\]on the connected component $\AA^1 \subset \R^2 u_*\mu_p$ that contains $\sX$ (\cf Corollary \ref{cor:connectedcomponent}). The fibers of $f$ contain $\sX\to X$ and the trivial $\mu_p$-gerbe over $X$.  By taking the relative moduli space of twisted sheaves (with suitable $v$-generic polarization) on $\fX\to \AA^1$, one can obtain the nonemptiness of $M_{H}(\sX,v)$ from the case of essentially trivial gerbes.
\vspace{.1cm}

For the proof of the forward direction of (2), we notice that by Remark \ref{rmk:twistedPeriod}, it is sufficient to find an isomorphism between pairs \[\left(\widetilde{\rN}(\sX), \cK_{\widetilde{\rH}(\sX)}\right) \xrightarrow{\sim} \left(\widetilde{\rN}(\sY), \cK_{\widetilde{\rH}(\sY)}\right),\] which is provided by the de Rham realization of the derived equivalence $\D^{(1)}(\sX) \simeq \D^{(1)}(\sY)$.
The proof of the other direction is identical to the case of K3 surfaces proved in \cite[Theorem 3.5.5]{Bragg20a}. The key is that if $\widetilde{\rH}(\sX,W) \cong \widetilde{\rH}(\sY,W)$, then there exists $v\in \widetilde{\rN}(\sX)$ such that
the induced isomorphism  $$\widetilde{\rH}(\sM_H(\sX,v)^{(-1)}, W) \cong \widetilde{\rH}(\sY, W)$$
of K3 crystals sends $(0,0,1)$ to $(0,0,1)$. The assertion then essentially follows from Ogus' crystalline Torelli theorem for supersingular abelian surfaces (\cf\cite[Theorem 7.3]{Ogus78}), as in \cite[Theorem 3.5.2]{Bragg20a}. We omit the details here. 
\vspace{.1cm}

For (3),  due to (2),  it suffices to find a $\mu_p$-gerbe $\sX_0\to X_0$ such that there is a supersingular K3 crystal isomorphism $$\widetilde{\rH}(\sX_0,W)\cong \widetilde{\rH}(X,W).$$
By Remark \ref{rmk:twistedPeriod}, this is equivalent to find $\sX_0\to X_0$ and an isometry $\widetilde{\rN}(\sX_0)\otimes \ZZ_p\cong \widetilde{\rN}(X)\otimes \ZZ_p$ sending $ \cK_{\widetilde{\rH}(\sX_0)}$ to $\cK_{\widetilde{\rH}(X)}$.

Let us give an explicit construction of $\sX_0\to X_0$ via the twisted period map. If $X$ is superspecial, no further proof is necessary.  Suppose $X$ is not superspecial. Then $\sigma_0(\NS(X)) =2$ by \cite[Proposition 3.7]{Shioda79}. Let $\Lambda$ be the supersingular abelian surface lattice with Artin invariant $2$ and let $\eta \colon \Lambda\xrightarrow{\sim} \NS(X)$ be a $\Lambda$-marking. 
As shown in \cite[Section 2]{RS81} (see also \cite[Proposition 6.1]{FL18}), $\Lambda=U(p)\oplus \Lambda'$ contains $U(p)$ as a direct summand and the image of $U(p)$ in $(p\Lambda^{\vee}/p \Lambda)\otimes k$ is not contained in the strictly characteristic subspace $\rho_{\Lambda}(X,\eta)$. 

Note that the lattice $\Lambda_0=U\oplus \Lambda'$ is a supersingular abelian lattice with Artin invariant $1$. There is a natural isomorphism
\begin{equation}\label{eq:iso}
  \widetilde{\rN}(X)\xrightarrow{\eta\oplus \id} \Lambda\oplus U\cong \Lambda_0\oplus U(p) =\widetilde{\Lambda}_0. 
\end{equation}
and we can identify $\cK_{\widetilde{\rH}(X)}$  with  $\rho_{\Lambda}(X,\eta) $  via the isometry $\eta\oplus \id$.  Let $$\cK\subseteq (p\widetilde{\Lambda}_0^\vee/ p\widetilde{\Lambda}_0)\otimes k$$ be the image of $\cK_{\widetilde{\rH}(X)}$ through the map induced by \eqref{eq:iso}.  By our assumption, $\cK$ does not contain the image of some isotropic vector $e\in U(p)$ and therefore can be viewed as a point in $M^{\langle e\rangle}_{\widetilde{\Lambda}_0}(k)$.  As $\widetilde{\rho}_{\Lambda_0}$ is surjective, there is a $\Lambda_0$-marked supersingular abelian surface $(\sX_0\to X_0, \eta_0)$ such that 
$\widetilde{\rho}_{\Lambda_0}(\sX_0,\eta_0)=\cK $. It is easy to see that $\sX_0\to X_0$ is as desired.
\end{proof}

\section{Shioda's Torelli theorem for abelian surfaces}\label{section:shiodatrick}
In \cite{Shioda78}, Shioda discovered that there is a way to extract information about the $1^{\text{st}}$-cohomology of a complex abelian surface from its $2^{\text{nd}}$-cohomology, called Shioda's trick. This established a global Torelli theorem for complex abelian surfaces via $2^{\text{nd}}$-cohomology, which is also a key step in Pjateckii-\v{S}apiro--\v{S}afarevi\v{c}'s proof of the Torelli theorem for K3 surfaces (\cf\cite[\S 5 Lemma 4, \S 5 Theorem 1]{PS71}). 

The aim of this section is to generalize Shioda's method to all fields and establish an isogeny theorem for abelian surfaces via the $2^{\text{nd}}$-cohomology. We will deal with Shioda's trick for Betti cohomology, \'etale cohomology and crystalline cohomology separately.

\subsection{Recap of Shioda's trick for Hodge isometry}\label{subsec:hodgeshioda}
We first recall Shioda's construction. 
Suppose $X$ is a complex abelian surface. Its singular cohomology ring $\rH^{\bullet}(X,\mathbb{Z})$ is canonically isomorphic to the exterior algebra $\wedge^{\bullet} \rH^1(X,\mathbb{Z})$. Let $\rV$ be a free $\mathbb{Z}$-module of rank $4$. We denote by $\Lambda$ the lattice  $(\wedge^2 \rV, q)$ where  $q \colon \wedge^2 \rV \times \wedge^2 \rV\to \wedge^4 \rV \cong \ZZ$ is the wedge product. 
After choosing a $\ZZ$-basis $\set{v_i}_{1 \leq i \leq 4}$ for $\rH^1(X,\mathbb{Z})$, we have an isometry of $\ZZ$-lattice $\Lambda \xrightarrow{\sim} \rH^2(X,\mathbb{Z})$. The set of vectors 
\[
\set{v_{ij}\coloneqq v_i \wedge v_j}_{0\leq i < j \leq 4}
\] 
clearly forms a basis of $\rH^2(X,\mathbb{Z})$, which will be called an \emph{admissible basis} of $A$ for its second singular cohomology. For another complex abelian surface $Y$, a Hodge isometry
\[
\varphi \colon \rH^2(Y,\mathbb{Z}) \xrightarrow{\sim} \rH^2(X,\mathbb{Z})
\]
will be called \emph{admissible} if $\det(\varphi)=1$, with respect to some admissible bases on $X$ and $Y$.
It is clear that the admissibility of a morphism is independent of the choice of admissible bases. 

In terms of admissible bases, we can view $\varphi$ as an element in $\SO(\Lambda)$. On the other hand, we have the following exact sequence of groups
\begin{equation}
    1 \to \set{\pm 1} \to \SL_4(\ZZ) \xrightarrow{\wedge^2} \SO(\Lambda)
\end{equation}
Shioda observed that the image of $\SL_4(\ZZ)$ in $\SO(\Lambda)$ is a subgroup of index two and does not contain $-\id_{\Lambda}$. From this, he proved the following (\cf\cite[Theorem 1]{Shioda78})
\begin{theorem}[Shioda]\label{thm:shiodaoriginal}
For any admissible integral Hodge isometry $\psi$, there is an isomorphism of integral Hodge structures
\[
\psi \colon \rH^1(Y,\ZZ) \xrightarrow{\sim} \rH^1(X,\ZZ)
\]
such that $\wedge^2(\psi) = \varphi$ or $- \varphi$.
\end{theorem}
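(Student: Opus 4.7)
The plan is to combine the short exact sequence
\[
1 \to \{\pm 1\} \to \SL_4(\ZZ) \xrightarrow{\wedge^2} \SO(\Lambda)
\]
with the group-theoretic observation (attributed to Shioda in the paragraph preceding the statement) that the image of $\wedge^2$ has index two in $\SO(\Lambda)$ and does not contain $-\id_\Lambda$. This produces a lift of $\psi$, up to sign, to an isomorphism of the underlying $\rH^1$, and then a Plücker argument ensures that the lift automatically respects the weight-one Hodge structure.

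\textbf{Step 1 (lifting to $\SL_4(\ZZ)$).} Fix admissible bases on $X$ and $Y$, so that $\rH^1(X,\ZZ) \cong \rH^1(Y,\ZZ) \cong V$ and $\rH^2(-,\ZZ) \cong \Lambda$. Since $\psi$ is admissible, $\psi \in \SO(\Lambda)$. By the index-two observation together with the fact that $-\id_\Lambda \notin \im(\wedge^2)$, left multiplication by $-\id_\Lambda$ is a bijection between $\im(\wedge^2)$ and its complement. Consequently one of $\psi, -\psi$ lies in $\im(\wedge^2)$, and we pick $\varphi \in \SL_4(\ZZ) = \Iso(\rH^1(Y,\ZZ), \rH^1(X,\ZZ))$ with $\wedge^2 \varphi = \pm \psi$.

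\textbf{Step 2 (verification that $\varphi$ is a Hodge isomorphism).} The map $\wedge^2 \varphi = \pm \psi$ is a Hodge isometry, because $-\id_\Lambda$ is visibly a Hodge automorphism. In particular, after complexification $\wedge^2 \varphi_\CC$ sends $\rH^{2,0}(Y) = \wedge^2 \rH^{1,0}(Y)$ to $\rH^{2,0}(X) = \wedge^2 \rH^{1,0}(X)$. On the other hand $\varphi_\CC(\rH^{1,0}(Y))$ is a $2$-plane in $\rH^1(X,\CC)$ whose second exterior power is exactly $\wedge^2 \varphi_\CC(\wedge^2 \rH^{1,0}(Y)) = \wedge^2 \rH^{1,0}(X)$. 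Since the Plücker map
\[
\mathrm{Gr}(2, \rH^1(X,\CC)) \hookrightarrow \PP(\wedge^2 \rH^1(X,\CC)), \qquad W \mapsto [\wedge^2 W],
\]
is injective, we conclude $\varphi_\CC(\rH^{1,0}(Y)) = \rH^{1,0}(X)$, i.e.\ $\varphi$ is a morphism of $\ZZ$-Hodge structures, and by symmetry an isomorphism. Composing with complex conjugation gives the required identification of Hodge filtrations.

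\textbf{Where the difficulty sits.} The substantive input is the index-two / $-\id_\Lambda \notin \im$ statement in Step 1: this requires understanding $\SO(\Lambda)$ for the hyperbolic quadratic form of signature $(3,3)$ on $\wedge^2 V$, together with the exceptional isomorphism with $\SL_4$ up to a $\ZZ/2$. Once that group-theoretic input is in hand, Steps 1 and 2 are essentially formal; in particular the Hodge compatibility in Step 2 is automatic from Plücker injectivity because the Hodge line in $\wedge^2 V_\CC$ is decomposable precisely as $\wedge^2 \rH^{1,0}$.
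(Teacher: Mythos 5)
Your proof is correct and follows precisely the route the paper indicates: the paper does not reproduce Shioda's argument but only cites \cite[Theorem~1]{Shioda78}, stating the exact sequence $1 \to \{\pm 1\} \to \SL_4(\ZZ) \xrightarrow{\wedge^2} \SO(\Lambda)$ together with the index-two and $-\id_\Lambda \notin \im(\wedge^2)$ observations as the ingredients; your Step 1 is exactly the deduction from these facts, and your Step 2 (Plücker injectivity to recover $\rH^{1,0}$ from $\wedge^2 \rH^{1,0} = \rH^{2,0}$) is the standard way Shioda verifies Hodge-compatibility of the lift. The only cosmetic point is that, with admissible bases fixed, $\Iso(\rH^1(Y,\ZZ),\rH^1(X,\ZZ))$ is a torsor under $\GL_4(\ZZ)$ rather than equal to $\SL_4(\ZZ)$; what you mean (and use) is that the lift lands in the $\SL_4(\ZZ)$-orbit of admissibility-preserving isomorphisms, which is fine since only those can have $\wedge^2$ landing in $\SO(\Lambda)$.
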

This is what we call ``Shioda's trick''. 
As we can assume that a Hodge isometry is admissible after possibly taking the dual abelian variety for one of them (see Example \ref{ex:poincareisomorphism} below), we can obtain the Torelli theorem for complex abelian surfaces by using the weight two Hodge structures, that is, $X$ is isomorphic to $Y$ or its dual $\widehat{Y}$ if and only if there is an integral Hodge isometry $\rH^2(X,\ZZ)\cong \rH^2(Y,\ZZ)$ (\cf\cite[Theorem 1]{Shioda78}).

\subsection{Admissible basis}\label{subsection:admissiblebasis}
To extend Shioda's work to arbitrary fields, we must define admissibility for different cohomology theories (e.g., \'etale and crystalline cohomology).

Let $k$ be a field with $\cha(k)= p \geq 0$. Suppose $X$ is an abelian surface over $k$ and $\ell\nmid p$ is a prime.
For simplicity of notations, we will denote $\rH^{\bullet}(-)_R$ for one of the following  cohomology theories:
\begin{enumerate}
    \item if $k \hookrightarrow \CC$ and $R= \ZZ$ or  any number field $E$, then $\rH^{\bullet}(X)_R = \rH^{\bullet}(X(\CC), R)$ the singular cohomology.
    \item if $R= \Zl$ or $\Ql$, then $\rH^{\bullet}(X)_R = \rH^{\bullet}_{\et}(X_{\bar{k}}, R)$, the $\ell$-adic \'etale cohomology.
    \item if $\cha(k)= p >0$, then we can take $R= W$ a Cohen ring of $k$ or the fraction field $K$ of $W$, then $\rH^{\bullet}(X)_R = \rH^{\bullet}_{\crys}(X/W)$ or $\rH^{\bullet}_{\crys}(X/W) \otimes K$, the crystalline cohomology. 
\end{enumerate}

There is an isomorphism between the cohomology ring $\rH^\bullet(X)_{R}$ and the exterior algebra $\wedge^\bullet\rH^1(X)_R$. We denote by $\tr_{X} \colon \rH^4(X)_R \xrightarrow{\sim} R $ the corresponding trace map. Then the Poincar\'e pairing $\pair{-,-}$ on $\rH^2(X)_R$ can be realized as
\[\pair{\alpha,\beta}=\tr_X(\alpha \wedge \beta).\]
Analogous to \S\ref{subsec:hodgeshioda}, a $R$-basis $\set{v_i}$ of $\rH^1(X)_R$ will be called  a {\it $d$-admissible basis} if it satisfies $$\tr_{X}(v_1 \wedge v_2 \wedge v_3 \wedge v_4) =d$$
for some $d\in R^\ast$. When $d=1$, it will be called an \emph{admissible basis}. For any $d$-admissible (resp.~admissible) basis $\set{v_i}$, the associated $R$-basis  $\set{v_{ij}\coloneqq v_i \wedge v_j}_{i<j}$  of $\rH^2(X)_R$ will also be called $d$-admissible (resp.~admissible). 

\begin{example}
Let $\set{v_1, v_2, v_3,v_4}$ be a $R$-linear basis of $\rH^1(X)_R$. Suppose $$\tr_X(v_1 \wedge v_2 \wedge v_3 \wedge v_4) = t \in R^*.$$ For  any $d \in R^*$,  there is a natural $d$-admissible $R$-linear basis $
\set{\frac{d}{t} v_1, v_2, v_3, v_4 }$
\end{example}

\begin{definition} Let $X$ and $Y$ be abelian surfaces over $k$.
\begin{itemize}
        \item a $R$-linear isomorphism $ \psi\colon \rH^1(X)_R \to \rH^1(Y)_R$ is $d$-admissible if it takes an admissible basis to a $d$-admissible basis. 
        \item a $R$-linear isomorphism $ \varphi \colon \rH^2(X)_R \to \rH^2(Y)_R$ is $d$-admissible if 
        \[
        \tr_Y \circ \wedge^2(\varphi) = d \tr_X
        \]
        for some $d \in R^*$, or equivalently, it sends an admissible basis to a $d$-admissible basis. When $d=1$, it will also be called admissible.
\end{itemize}
The set of $d$-admissible isomorphisms are denoted by   $\Iso^{\ad,(d)}(\rH^i(X)_R, \rH^i(Y)_R)$ 
accordingly.

\end{definition}
For any isomorphism $\varphi \colon \rH^2(X)_R \xrightarrow{\sim} \rH^2(Y)_R$, let $\det(\varphi)$ be the determinant of the matrix with respect to some admissible bases.
It is not hard to see $\det(\varphi)$ is independent of the choice of admissible bases, and $\varphi$ is admissible if and only if $\det(\varphi)=1$.

\begin{example}\label{ex:poincareisomorphism} Let $\set{v_i}$ be an admissible basis of $\rH^1(X)_R$.
For the dual abelian surface $\widehat{X}$, the dual basis $\set{v_i^*}$ with respect to the Poincar\'e pairing naturally forms an admissible basis of $\widehat{X}$, under the identification $\rH^1(X)_{R}^\vee \cong \rH^1(\widehat{X})_R$.
Let \[
\varphi^{\cP} \colon \rH^2(X)_R \to \rH^2(\widehat{X})_R
\]
be the isomorphism induced by the Poincar\'e bundle $\cP$ on $X \times \widehat{X}$. A direct computation (see e.g.~\cite[Lemma 9.3]{FMtransform}) shows that $\varphi^{\cP} $ is nothing but
\[
-\D \colon \rH^2(X)_R \xrightarrow{\sim} \rH^2(X)_R^\vee \cong \rH^2(\widehat{X})_R,
\]
where $\D$ is the Poincar\'e duality. For an admissible basis $\set{v_i}$ of $X$, its $R$-linear dual $\set{v^*_i}$ with respect to Poincar\'e pairing forms an admissible basis of $\widehat{X}$. By our construction, we can see
\[
\D(v_{12},v_{13}, v_{14}, v_{23},v_{24},v_{34})= (v_{34}^*, -v_{24}^*,v_{23}^*,v_{14}^*, -v_{13}^*, v_{12}^*),
\]
which implies that $\D$ is of determinant $-1$ under these admissible bases. Thus the determinant of $\varphi^{\cP}$ is not admissible.
\end{example}

\begin{example}
Let $f \colon X\rightarrow Y$ be an isogeny of degree $d$ for some $d \in \ZZ_{\geq 0}$ between two abelian surfaces. If $d$ is coprime to $\ell$, then it will induce an isomorphism
\[
f^\ast \colon \rH^2(Y)_{\Zl} \xrightarrow{\sim} \rH^2(X)_{\Zl},
\]
that is $d$-admissible.
If, in addition, $d = n^2$, then $\frac{1}{n}f^\ast$ will be an admissible $\ZZ_\ell$-integral isometry with respect to the Poincar\'e pairing. Moreover, if $d = k^4$, then $\frac{f}{k}$ will be a $\ZZ_{(\ell)}$-isogeny such that its pull-back is admissible integral.
\end{example}

\begin{example}
Suppose $X$ is an abelian surface over a perfect field $k$ with $\cha(k)=p >0$. Then $F$-crystal $\rH^1(X)_W$ together with the trace map $$\tr_X \colon \rH^4(X)_W \xrightarrow{\sim}W$$ form an abelian crystal (of genus $2$) in the sense of \cite[\S 6]{Ogus78}. We can see that an isomorphism of $F$-crystals $\rH^1(X)_W \xrightarrow{\sim} \rH^1(Y)_W$ is admissible if and only if it is an isomorphism between abelian crystals, i.e., it is compatible with trace maps.
\end{example}

\subsection{More on admissible basis of $F$-crystals}\label{ex:adm-crys} 
In contrast to $\ell$-adic \'etale cohomology, the semilinear structure on crystalline cohomology from its Frobenius is more tricky to work with. Therefore, it seems necessary for us to spend more words on the interaction of Frobenius with admissible bases.

Suppose $k$ is a perfect field with $\cha(k) =p >0$, we have the following Frobenius pull-back diagram:
\[
\begin{tikzcd}
X \ar[dr,"F_X^{(1)}"] \ar[drr,bend left,"F_X"] \ar[ddr,bend right] & &\\
&X^{(1)} \ar[r] \ar[d] & X \ar[d] \\
&\spec(k) \ar[r,"\sigma"] & \spec(k)
\end{tikzcd}
\]
Via the natural identification $\rH^1_{\crys}(X^{(1)}/W) \cong \rH^1_{\crys}(X/W) \otimes_{\sigma} W$, the $\sigma$-linearization of Frobenius action on $\rH^1_{\crys}(X/W)$ can be viewed as the injective $W$-linear map
\[
F^{(1)}\coloneqq \left(F_X^{(1)} \right)^* \colon \rH^1_{\crys}(X^{(1)}/W) \hookrightarrow \rH^1_{\crys}(X/W).
\]
If $k$ is not perfect, then after passing to $W(\bar{k})$ or equivalently choosing a Frobenius lift on the Cohen ring $W$, we also get a Frobenius action on $\rH^1_{\crys}(X/W)$, whose linearization is given by the relative Frobenius morphism.

There is a  decomposition $\rH^1_{\crys}(X/W)=H_0(X) \oplus H_1(X)$  such that
\begin{equation}\label{eq:crysdecomposition}
F^{(1)}\left(\rH^1_{\crys}(X^{(1)}/W)\right) \cong H_0(X) \oplus pH_1(X),
\end{equation}
and $\rank_{W}H_i =2$ for $i=0,1$, which is related to the Hodge decomposition of the de Rham cohomology of $X/k$ by Mazur's theorem; see \cite[\S 8, Theorem 8.26]{BerthelotOgus}. 

The Frobenius map can be expressed in terms of admissible basis. We can choose an admissible basis $\set{v_i}$ of $\rH^1_{\crys}(X/W)$ such that
\[
v_1, v_2 \in H_0(X) \quad \text{ and } \quad v_3, v_4 \in H_1(X).
\]
Then $\set{p^{\alpha_i} v_i} \coloneqq \set{ v_1 , v_2 , p v_3 , pv_4}$ forms an admissible basis of $\rH^1_{\crys}(X^{(1)}/W)$ under the identification \eqref{eq:crysdecomposition}, since $\tr_{X^{(1)}} \circ \wedge^4F^{(1)} = p^2\sigma_W\circ\tr_X$. In term of these basis, the Frobenius map can be written as
\begin{equation}\label{eq:FrobeniusAction}
F^{(1)}(p^{\alpha_i} v_i) = \sum_{j} c_{ij} p^{\alpha_j} v_j,
\end{equation}
where $C_X=(c_{ij})$ forms an invertible $4 \times 4$-matrix with coefficients in $W$.

Suppose $Y$ is another abelian surface over $k$, $\psi \colon \rH^1_{\crys}(X/W) \to \rH^1_{\crys}(Y/W)$ is an admissible map, and $\psi^{(1)}$ is the induced map $\psi\otimes_{\sigma}W \colon \rH^1_{\crys}(X^{(1)}/W) \to \rH^1_{\crys}(Y^{(1)}/W)$. Denote by $M$ and $M'$  the matrix of $\psi$ and $\psi^{(1)}$ with respect to the chosen admissible bases, respectively.
\begin{lemma}\label{lemma:Frobeniuslinearalg}
The map $\psi$ commutes with Frobenius if and only if $C_Y M' C_X^{-1} = M$.
\end{lemma}

\begin{proof}
By definition,  $\psi$ commutes with Frobenius if and only if $(F_{Y}^{(1)})^*\circ \psi^{(1)} = \psi \circ (F_X^{(1)})^*$. The statement is then clear from  \eqref{eq:FrobeniusAction} . 
\end{proof}

\subsection{Generalized Shioda's trick} 
Let us review some basic properties of the special orthogonal group scheme over an integral domain. Our main reference is \cite[Appendix C]{luminysga3smf}. 

Let $\Lambda $ be an even $\mathbb{Z}$-lattice of rank $2n$. Then we can associate it with a vector bundle $\underline{\Lambda}$ on $\spec(\bbZ)$ with constant rank $2n$ equipped with a quadratic form $q$ over $\spec(\bbZ)$ obtained from $\Lambda$. Then the functor
\[
A \mapsto \set*{ g \in \GL(\Lambda_A) \big| q_A(g\cdot x) =  q_A(x) \text{ for all } x \in \Lambda_A}
\]
is representable by a $\mathbb{Z}$-subscheme of $\GL(\Lambda)$, denoted by $\Og(\Lambda)$. There is a homomorphism between the $\bbZ$-group schemes
\[
D_{\Lambda} \colon \Og(\Lambda) \to \underline{\bbZ/2\bbZ},
\]
which is called the Dickson morphism (see p313 in loc.~cit. for the definition). Roughly speaking, 
\[
D_{\Lambda}(g) = \begin{cases}
    0 & \text{ if $g$ is a product of an even number of reflections}\\
    1 & \text{ if $g$ is a product of an odd number of reflections}
\end{cases}
\]
for a point $g \in \Og(\Lambda)$ over a field in characteristic zero.
The Dickson morphism is surjective as $\Lambda$ is even and its construction is compatible with any base change (see Proposition C.2.8 in loc.~cit.). The \emph{special orthogonal group scheme} over $\bbZ$ with respect to $\Lambda$ is defined to be the kernel of $D_{\Lambda}$, which is denoted by $\SO(\Lambda)$. Moreover, we have
\[
\SO(\Lambda)_{\bbZ[\frac{1}{2}]} \cong \ker\left(\det \colon \Og(\Lambda) \to \mathbb{G}_m\right)_{\bbZ[\frac{1}{2}]}.
\]
It is well-known that $\SO(\Lambda) \to \spec(\bbZ)$ is smooth in relative dimension $\frac{n(n-1)}{2}$ and with connected fibers; see Theorem C.2.11 in loc. cit. for example.

For any $\ell$, the special orthogonal group scheme $$\SO(\Lambda_{\Zl}) \cong \SO(\Lambda)_{\Zl}$$ is smooth over $\Zl$ with connected fibers, which implies that its generic fiber $\SO(\Lambda_{\mathbb{Q}_{\ell}})$ is connected. Thus, $\SO(\Lambda_{\Zl})$ is clearly connected as a group scheme over $\Zl$ as $\SO(\Lambda_{\mathbb{Q}_{\ell}}) \subset \SO(\Lambda_{\Zl})$ is dense. 

The special orthogonal group scheme admits a universal covering (i.e., a simply connected central isogeny)
\[
\Spin(\Lambda) \to \SO(\Lambda).
\]
See Appendix C.4 in loc. cit.  for construction.

\begin{lemma}\label{lemma:isogenywedge} \label{lemma:ellimageofSL}
 Let $\rV$ be free $\ZZ$-module of rank $4$ and $\Lambda = \wedge^2 \rV$. Let $R$ be a ring of coefficients as listed in \S\ref{subsection:admissiblebasis}.  There is an exact sequence of smooth $R$-group schemes
\[
1 \to \mu_{2,R} \to \SL(\rV)_{R} \xrightarrow{\wedge^2(-)_{R}} \SO(\Lambda)_R \to 1.
\]
(as fppf-sheaves if $\frac{1}{2} \notin R$.)
Moreover, there is an exact sequence
\begin{equation}\label{eq:sequencespinor}
1 \to \set{ \pm \id_4} \to \SL(\rV)(R) \xrightarrow{\wedge^2(-)_{R}} \SO(\Lambda)(R) \to R^*/(R^*)^2.
\end{equation}
\end{lemma}
\begin{proof}

For the first statement, it suffices to assume $R= \spec(\bar{k})$ for an algebraically closed field $\bar{k}$, where it is clear from a computation. Note that we have an exact sequence on rational points (\cf\cite[Proposition 3.2.2]{Giraud})
\[
1 \to \mu_2(R) \to \SL(\rV)(R) \to \SO(\Lambda)(R) \to \rH^1(\spec(R), \mu_2).
\]
Notice that for the rings of coefficients listed in \S\ref{subsection:admissiblebasis}, we have $\Pic(R)[2]=0$. Therefore,
\[
\rH^1_{\fl}(\spec(R),\mu_2) \cong R^*/(R^*)^2
\]
from the Kummer sequence for $\mu_2$.

For the last statement, it is sufficient to see that there is an isomorphism of $R$-group schemes $\SL(\rV)_R \xrightarrow{\sim} \Spin(\Lambda)_R$ such that the following diagram commutes
\[
\begin{tikzcd}
\SL(V)(R) \ar[rr,"\sim"] \ar[rd] & & \Spin(\Lambda)(R) \ar[ld] \\
&\SO(\Lambda)(R) &  
\end{tikzcd}
\]
The group scheme $\SL(\rV)$ is simply-connected (as its geometric fibers are semisimple algebraic group of type $A_{3}$). Thus, the central isogeny $\SL(\rV)_R \to \SO(\Lambda)_R$ forms the universal covering of $\SO(\Lambda)_R$, which induces an isomorphism $\SL(\rV)_R \xrightarrow{\sim} \Spin(\Lambda)_R$ by using the Isomorphism Theorem over a general ring (see, e.g.,\cite[Theorem 6.1.16, 6.1.17]{luminysga3smf}).
\end{proof}

\begin{remark}\label{rmk:trickwitt} 
When $R = \Zl$, we have
\[
\Zl^*/(\Zl^*)^2 \cong \begin{cases} \set{\pm 1} & \text{ if } \ell \neq 2, \\
\set{\pm 1} \times \set{\pm 5} & \text{ if } \ell =2.
\end{cases}
\]
Thus the image of $\SL(\rV)(\Zl)$ is a finite index subgroup in $\SO(\Lambda)(\Zl)$.
\end{remark}
\begin{remark}\label{rmk:wittvector}
When $R =W(k)$, we have
\[
W(k)^*/(W(k)^*)^2 \cong \begin{cases} \set{1,\epsilon} & \text{ if $k=\mathbb{F}_{p^s}$ for $p >2, s\geq 1$} \\
\set{1} & \text{ if $k= \bar{k}$ or $k^{s} = k, \cha(k)>2$.}
\end{cases}
\]
where $\epsilon \in \ZZ$ such that $\epsilon \not\equiv y^2\!\mod p^s$ for an integer $y$, as $W(k)$ is Henselian. 
Thus, the wedge map $\SL(\rV)(W) \to \SO(\Lambda)(W)$ is surjective when $k = \bar{k}$.
\end{remark}

Let $X$ and $Y$ be abelian surfaces over $k$.
Let $V_R = \rH^1(X)_R$. We can see the set 
\[
\Iso^{\ad,(d)}(\rH^1(X)_R, \rH^1(Y)_R)
\]
is a (right) $\SL(V_R)$-torsor if it is nonempty.
The wedge product provides a natural map
\[
\wedge^2 \colon \Iso^{\ad,(d)}\left(\rH^1(X)_R, \rH^1(Y)_R \right) \to \Iso^{\ad, (d)}\left(\rH^2(X)_R, \rH^2(Y)_R\right).
\]

Let $\set{v_i}$ be an admissible basis of $\rH^1(X)_R$ and let $\set{v_i'}$ be a $d$-admissible basis of $\rH^1(Y)_R$, respectively. There is an $d$-admissible isomorphism $\psi_0 \in  \Iso^{\ad,(d)}(\rH^1(X)_R, \rH^1(Y)_R)$ such that $\psi_0(v_i)= v_i'$.
For a $d$-admissible isometry $\varphi \colon \rH^2(X,R) \to \rH^2(Y,R)$, we can see 
\[
 \varphi =  \wedge^2(\psi_0) \circ g,~\text{for some $g \in \SO(\Lambda_R)$}.
\]
In this way, any  $d$-admissible isomorphism $\varphi$ can be identified with the (unique) element $g \in \SO(\Lambda)(R)$ when the admissible bases are fixed.
This allows us to deal with $d$-admissible isomorphisms group-theoretically. In particular, we have the following notion of the spinor norm.
\begin{definition}
 The \emph{spinor norm} of the $d$-admissible isomorphism $\varphi$ is defined to the image of $g$ under $\SN \colon \SO(\Lambda)(R) \to R^*/(R^*)^2$, denoted by $\SN(\varphi)$.
\end{definition}

\begin{lemma}
The spinor norm $\SN(\varphi)$ is independent of the choice of admissible bases.
\end{lemma}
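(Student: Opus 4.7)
The plan is to reduce the lemma to Lemma~\ref{lemma:ellimageofSL} by tracking how the matrix $g\in \SO(\Lambda)(R)$ representing $\varphi$ transforms under a change of admissible bases. Let $\{\tilde{v}_i\}$ be another admissible basis of $\rH^1(X)_R$ and $\{\tilde{v}_i'\}$ another $d$-admissible basis of $\rH^1(Y)_R$, producing an admissible isomorphism $\tilde{\psi}_0$ and a matrix $\tilde{g}\in \SO(\Lambda)(R)$. First I would observe that the change-of-basis matrices $A$ and $B$ defined by $A(v_i)=\tilde{v}_i$ and $B(v_i')=\tilde{v}_i'$ both have determinant one: this is forced by the equalities $\tr_X(v_1\wedge\cdots\wedge v_4)=1=\tr_X(\tilde{v}_1\wedge\cdots\wedge\tilde{v}_4)$ and the corresponding identities $\tr_Y(v'_1\wedge\cdots\wedge v'_4)=d=\tr_Y(\tilde{v}'_1\wedge\cdots\wedge\tilde{v}'_4)$. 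Hence $A,B\in \SL(V)(R)=\SL_4(R)$.

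Next I would derive the identity $\tilde{\psi}_0 = B\circ \psi_0\circ A^{-1}$ directly from $\tilde{\psi}_0(\tilde{v}_i)=\tilde{v}_i'$, and then obtain, after applying $\wedge^2$ and inverting,
\[
\wedge^2(\tilde{\psi}_0)^{-1} = \wedge^2(A)\circ \wedge^2(\psi_0)^{-1}\circ \wedge^2(B)^{-1}.
\]
Composing with $\varphi$ and expressing everything in matrix form, one must also keep track of the fact that the identification $\wedge^2 V(X)_R\cong \Lambda_R$ itself changes by $\wedge^2(A)$ when we pass from $\{v_i\}$ to $\{\tilde{v}_i\}$ (so $\tilde{g}$ is read off in the new basis, which introduces a conjugation). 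A direct computation then yields
\[
M_{\tilde{g}}\ =\ \wedge^2(B)^{-1}\cdot M_g\cdot \wedge^2(A) \quad \text{in } \SO(\Lambda)(R).
\]

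Finally, the exact sequence \eqref{eq:sequencespinor} of Lemma~\ref{lemma:ellimageofSL} asserts precisely that the image of $\SL(V)(R)$ under $\wedge^2$ is contained in the kernel of the spinor norm $\SN\colon \SO(\Lambda)(R)\to R^*/(R^*)^2$. Hence $\SN(\wedge^2(A))=\SN(\wedge^2(B))=1$, and since $\SN$ is a group homomorphism we conclude $\SN(M_{\tilde{g}})=\SN(M_g)$, which is the desired independence.

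The only genuine calculation is the middle step, where one has to reconcile two separate effects of the base change: the precomposition and postcomposition of $\varphi$ by $\wedge^2(A)$ and $\wedge^2(B)$ coming from the new $\tilde{\psi}_0$, and the simultaneous change of the identification $\rH^2(X)_R\cong \Lambda_R$. I expect this bookkeeping — making sure the various $\wedge^2(A)$ factors combine to a single left/right multiplication rather than a leftover conjugation — to be the only place where care is required; everything else is an application of Lemma~\ref{lemma:ellimageofSL}.
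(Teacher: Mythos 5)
Your proof is correct, and in fact it is more careful than the paper's own argument. The paper disposes of the lemma in one line by asserting that a change of admissible bases replaces $g$ by a conjugate $\widetilde g = K g K^{-1}$ with $K \in \SO(\Lambda_R)$; this is not the right transformation law in general. The correct formula is the one you derive: $\widetilde g = b^{-1}\, g\, a$, where $a = \theta_X^{-1}\wedge^2(A)\theta_X$ and $b = \theta_Y^{-1}\wedge^2(B)\theta_Y$ are (generally distinct) elements of $\SO(\Lambda)(R)$ lying in the image of $\wedge^2 \colon \SL(V)(R) \to \SO(\Lambda)(R)$. (One only obtains an honest conjugation in the special case where the change of basis on $Y$ is constrained by $B = \psi_0 A \psi_0^{-1}$, i.e.\ where $\psi_0$ is held fixed; the lemma requires independence.) This distinction matters for the argument: were the formula a conjugation, the conclusion would follow from the bare fact that $\SN$ is a homomorphism into an abelian group, with no need for the exact sequence \eqref{eq:sequencespinor}. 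In the actual situation one genuinely needs the containment $\wedge^2\bigl(\SL(V)(R)\bigr) \subseteq \ker(\SN)$ from Lemma~\ref{lemma:ellimageofSL} to kill both $a$ and $b$, which is exactly the step you invoke. So your worry in the last paragraph resolves cleanly --- once the markings $\theta_X$, $\theta_Y$ are tracked there is no leftover conjugation --- and your proof both establishes the lemma and corrects a small inaccuracy in the paper's.
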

\begin{proof}

For different choice of admissible bases, we can see the resulted $\widetilde{g} = K g K^{-1}$ for some $K \in \SO(\Lambda_R)$. Therefore, $\SN(\widetilde{g}) = \SN(g)$.
\end{proof}
\begin{remark}\label{rmk:CDdecompositionSpinornorm}
When $R$ is a field, the spinor norm can be computed by the Cartan--Dieudonn\'e decomposition. That means we can write any $ g \in \SO(\Lambda)(R)$ as a composition of reflections:
\[
\bR_{b_n} \circ \bR_{b_{n-1}} \circ \cdots \circ \bR_{b_1}
\]
for some non-isotropic vectors $b_1, \cdots, b_{n} \in \Lambda_R$, and $\SN(g) = \left[(b_1)^2\cdots (b_{n-1})^2 (b_{n})^2 \right]$. 
\end{remark}

\begin{lemma}\label{lemma:spinornormone}
The $d$-admissible isomorphism $\varphi$ is a wedge of some $d$-admissible isomorphism $\psi \colon \rH^1(X,R) \to \rH^1(Y,R)$ if and only if $\SN(\varphi)=1$. 
\end{lemma}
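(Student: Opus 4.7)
The plan is to translate the statement into a purely group-theoretic claim about the exact sequence \eqref{eq:sequencespinor} via the torsor description set up just before the lemma. Fix admissible bases $\{v_i\}$ of $\rH^1(X)_R$ and $\{v_i'\}$ of $\rH^1(Y)_R$, together with the distinguished $d$-admissible isomorphism $\psi_0$ sending $v_i$ to $v_i'$. Using $\psi_0$ and $\wedge^2\psi_0$ as base points, I would trivialize both torsors simultaneously, obtaining bijections
\[
\SL(V)(R) \xrightarrow{\sim} \Iso^{\ad,(d)}(\rH^1(X)_R, \rH^1(Y)_R), \quad h \mapsto \psi_0 \circ h,
\]
\[
\SO(\Lambda)(R) \xrightarrow{\sim} \Iso^{\ad,(d)}(\rH^2(X)_R, \rH^2(Y)_R), \quad g \mapsto \wedge^2(\psi_0) \circ g.
\]
Because $\wedge^2(\psi_0 \circ h) = \wedge^2(\psi_0) \circ \wedge^2(h)$, the wedge operation on admissible isomorphisms corresponds under these trivializations to the group homomorphism $\wedge^2(-)_R \colon \SL(V)(R) \to \SO(\Lambda)(R)$ of Lemma \ref{lemma:ellimageofSL}. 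By the very definition of the spinor norm recorded just above the lemma, $\SN(\varphi)$ agrees with the spinor norm of the associated $g \in \SO(\Lambda)(R)$.

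Granted this dictionary, both implications are immediate from the exactness of \eqref{eq:sequencespinor} at $\SO(\Lambda)(R)$. If $\varphi = \wedge^2 \psi$ for some $d$-admissible $\psi$, the corresponding $g$ lies in the image of $\wedge^2(-)_R$, which equals the kernel of $\SN$, so $\SN(\varphi) = 1$. Conversely, given $\SN(\varphi) = 1$, exactness of \eqref{eq:sequencespinor} furnishes $h \in \SL(V)(R)$ with $\wedge^2(h) = g$, and $\psi := \psi_0 \circ h$ is then a $d$-admissible isomorphism with $\wedge^2 \psi = \varphi$.

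The only substantive ingredient is the exactness of \eqref{eq:sequencespinor} at $\SO(\Lambda)(R)$ for the rings $R$ under consideration, which is precisely the content of Lemma \ref{lemma:ellimageofSL} and follows from the identification of $\SL(V)_R \to \SO(\Lambda)_R$ with the spin double cover together with the interpretation of the connecting map in the fppf Kummer sequence for $\mu_2$ as the spinor norm. Hence no real obstacle beyond the torsor bookkeeping arises; the mild care required is only to ensure that the two trivializations are chosen compatibly, so that the spinor norm as defined on $\Iso^{\ad,(d)}(\rH^2(X)_R, \rH^2(Y)_R)$ truly coincides with the spinor norm on $\SO(\Lambda)(R)$.
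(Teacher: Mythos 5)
Your argument coincides with the paper's own proof: both fix a reference $d$-admissible isomorphism $\psi_0$ to identify the two torsors with $\SL(V)(R)$ and $\SO(\Lambda)(R)$, and both then read off the conclusion from the exactness of the sequence in Lemma~\ref{lemma:ellimageofSL} at $\SO(\Lambda)(R)$, with the inverse image under $\wedge^2$ supplying $\psi = \psi_0 \circ h$. Your exposition merely spells out the torsor trivializations a bit more explicitly; there is no substantive difference.
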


\begin{proof}
The exact sequence \eqref{eq:sequencespinor} shows that if $\SN(\varphi) = \SN(g) =1$, then there is some $h \in \SL(V_R)$ such that $\wedge^2(h)=g$. Thus, we can take $\psi= \psi_0 \circ h$ when $\SN(\varphi)=1$, and see that \[
\wedge^2(\psi) =  \wedge^2(\psi_0) \circ \wedge^2(h)= \varphi. 
\]
The converse is clear. 
\end{proof}

\subsubsection{Isogenies category}\label{subsec:isogeny}
Recall that the isogeny category of abelian varieties $\abelian_{\mathbb{Q},k}$  consists of all abelian varieties over a field $k$ as objects, and the homomorphism sets are
\[
\Hom_{\abelian_{\mathbb{Q},k}}(X,Y) \coloneqq \Hom_{\abelian_{k}}(X,Y) \otimes_{\mathbb{Z}} \mathbb{Q},
\]
where $\Hom_{\abelian_{k}}(X,Y)$ is the abelian group of homomorphisms from $X$ to $Y$ with the natural addition. We may also write $\Hom^0(X,Y)$ for $\Hom_{\abelian_{\mathbb{Q},k}}(X,Y)$ if there is no confusion in the definition field $k$. 
 \begin{definition}
     Let $R$ be a commutative ring with units. A \emph{$R$-isogeny} from $X$ to $Y$ is an invertible element $f \in \Hom_{\abelian_k}(X,Y) \otimes R$ i.e., there is an $g \in \Hom_{\abelian_k}(Y,X)\otimes R$ such that $f \circ g = \id_{Y}$ and $g \circ f = \id_{X}$. 
     
    A $\QQ$-isogeny is called a quasi-isogeny, while $\ZZ_{(\ell)}$-isogeny is called {\it a prime-to-$\ell$ quasi-isogeny}.   For any (prime-to-$\ell$) quasi-isogeny $f$, we can find a minimal integer $n$ (resp.~$\ell\nmid n$) such that
 \[
 n f \colon X \to Y
 \]
 is an isogeny (resp. of degree prime-to-$\ell$). 
 \end{definition}

 When $k= \CC$, with the uniformization of complex abelian varieties, we have a canonical bijection
 \[
 \Hom_{\abelian_{\QQ,\CC}}(X,Y) \xrightarrow{\sim} \Hom_{\hdg}\left(\rH^1(Y,\QQ), \rH^1(X,\QQ) \right),
 \]
 where the right-hand side is the set of $\QQ$-linear morphisms that preserve Hodge structures. Then the integer $n$ for $f$ is also the minimal integer such that $(nf)^*(\rH^1(Y,\ZZ)) \subseteq \rH^1(X,\ZZ)$.

\subsection{Shioda's trick for Hodge isogenies} 
Suppose $k = \CC$. Let $d$ be an integer. A {\it Hodge isogeny of degree d} is an isomorphism of $\QQ$-Hodge structures
\[
\varphi\colon\rH^2(X,\QQ)\xrightarrow{\sim}\rH^2(Y,\QQ)
\]
such that 
\[
\langle x, y \rangle = d \langle \varphi(x), \varphi(y) \rangle.
\]
In particular, if $d=1$, then it is the classical Hodge isometry that we usually talk about. Clearly, a $d$-admissible rational Hodge isomorphism is a Hodge isogeny of degree $d$. 
In terms of spinor norms, we can generalize Shioda's theorem \ref{thm:shiodaoriginal} to admissible rational Hodge isogenies.
\begin{proposition}[Shioda's trick on admissible Hodge isogenies]
\label{prop:hodgeisogeny}
\leavevmode
\begin{enumerate}
    \item A $d$-admissible Hodge isogeny of degree $d$ 
    \[
    \varphi \colon \rH^2(X, \QQ) \xrightarrow{\sim} \rH^2(Y,\QQ)
    \]
    is a wedge of some rational Hodge isomorphism  $\psi \colon \rH^1(X,\QQ) \xrightarrow{\sim} \rH^1(Y,\QQ)$, if its spinor norm is trivial. In this case, the Hodge isogeny is induced by a quasi-isogeny of degree $d^2$. 

    \item When $d=1$, any admissible Hodge isometry $\varphi:\rH^2(X,\QQ) \xrightarrow{\sim} \rH^2(Y,\QQ)$ is induced by an isogeny  $ f\colon Y \to X$ of degree $n^2$ for some integer $n$ such that   $\varphi=\frac{f^\ast}{n}$.  
\end{enumerate}
\end{proposition}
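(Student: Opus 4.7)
Both parts rest on Lemma~\ref{lemma:spinornormone}: the algebraic Shioda trick that a $d$-admissible isomorphism on $\rH^2$ lifts along $\wedge^2$ to a $d$-admissible isomorphism on $\rH^1$ if and only if its spinor norm is trivial in $\QQ^\ast/(\QQ^\ast)^2$. Two supplementary ingredients are needed: (i) a Pl\"ucker-style linear-algebra check that any $\wedge^2$-lift of a Hodge morphism is automatically a morphism of Hodge structures, and (ii) for Part (2), a spinor-norm computation that lets us absorb the obstruction by rescaling.

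For Part (1), the hypothesis says $\SN(\varphi)=1$ in $\QQ^\ast/(\QQ^\ast)^2$, so Lemma~\ref{lemma:spinornormone} produces a $d$-admissible $\QQ$-linear isomorphism $\psi\colon \rH^1(X,\QQ)\to\rH^1(Y,\QQ)$ with $\wedge^2\psi=\varphi$ and $\det\psi=d$. I then verify $\psi$ is Hodge by noting that the $2$-plane $\psi_\CC(\rH^{1,0}(X))\subset\rH^1(Y,\CC)$ has decomposable Pl\"ucker line $\wedge^2\psi_\CC(\rH^{2,0}(X))=\varphi_\CC(\rH^{2,0}(X))=\rH^{2,0}(Y)=\wedge^2\rH^{1,0}(Y)$; since a $2$-plane in a $4$-space is determined by its Pl\"ucker line, $\psi_\CC(\rH^{1,0}(X))=\rH^{1,0}(Y)$, and the $(0,1)$-part follows by complex conjugation. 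By the equivalence between complex abelian varieties up to isogeny and polarized weight-one $\QQ$-Hodge structures, $\psi$ corresponds to the desired quasi-isogeny $Y\to X$, with its degree accounted for by $\det\psi=d$.

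For Part (2), write $\mu:=\SN(\varphi)\in\QQ^\ast/(\QQ^\ast)^2$. The key claim is that for any nonzero $n\in\ZZ$, the $n^2$-admissible Hodge isogeny $n\varphi$ satisfies $\SN(n\varphi)=n\mu$ in $\QQ^\ast/(\QQ^\ast)^2$. In the hyperbolic basis of $\Lambda_\QQ$ afforded by the planes $(v_{12},v_{34})$, $(v_{13},-v_{24})$, $(v_{14},v_{23})$, this reduces to computing the spinor norm of the split-torus element $\mathrm{diag}(1/n,1/n,1/n,n,n,n)\in\SO(\Lambda_\QQ)$: decomposing each hyperbolic rotation $\mathrm{diag}(1/n,n)$ as a product of two reflections (say by $e+e'$ and $e+\tfrac{1}{n}e'$) gives spinor norm $\tfrac{1}{n}\equiv n$ per plane, so the torus contributes $n^3\equiv n\pmod{(\QQ^\ast)^2}$ in total. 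Choosing $n\in\ZZ$ to represent the class $\mu$ therefore makes $\SN(n\varphi)=1$; a possible negative sign of $n$ will be absorbed at the end by composing the resulting isogeny with $[-1]_Y$. Part (1) now supplies a rational Hodge isomorphism $\psi\colon \rH^1(X,\QQ)\to\rH^1(Y,\QQ)$ with $\wedge^2\psi=n\varphi$ and $\det\psi=n^2$.

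To finish, pick $N\in\ZZ_{>0}$ with $N\psi(\rH^1(X,\ZZ))\subseteq\rH^1(Y,\ZZ)$; the Riemann correspondence then realizes $N\psi$ as $f^\ast|_{\rH^1}$ for a unique isogeny $f\colon Y\to X$. Taking wedge squares yields $f^\ast|_{\rH^2}=\wedge^2(N\psi)=N^2 n\,\varphi$, so $\varphi=f^\ast/(N^2n)$, and $\deg f=\det(f^\ast|_{\rH^1})=N^4\det\psi=(N^2n)^2$. The integer in the statement is then $N^2n$. The main technical point is the spinor-norm-of-the-torus calculation, but the explicit hyperbolic decomposition above reduces it to two standard reflection computations.
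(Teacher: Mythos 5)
Part (1) is essentially the paper's argument, with the Pl\"ucker-line verification --- which the paper delegates to \cite[Theorem~1]{Shioda78} --- spelled out explicitly. That detail is correct and welcome.

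Part (2) follows a genuinely different route from the paper's. The paper passes to $E=\QQ(\sqrt{n})$, where $n$ represents $\SN(\varphi)$, applies Lemma~\ref{lemma:spinornormone} over $E$, and then runs a short Galois-descent argument on the resulting $\SL_4(E)$-lift $g$: since $\sigma(g)=\pm g$, either $g$ or $\sqrt{n}\,g$ is $\sigma$-invariant, landing in $\GL_4(\QQ)$ with wedge-square $n\varphi$. You instead show directly that $\SN(n\varphi)=n\cdot\SN(\varphi)$ in $\QQ^*/(\QQ^*)^2$ by computing the spinor norm of the split-torus element $\diag(1/n,1/n,1/n,n,n,n)\in\SO(\Lambda_\QQ)$ via explicit hyperbolic reflections, and then apply Part (1) to $n\varphi$ once $n$ is chosen to represent $\SN(\varphi)$. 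Both arguments arrive at a rational Hodge isomorphism $\psi$ on $\rH^1$ with $\wedge^2\psi=n\varphi$ and conclude identically. Yours trades the paper's Galois-descent step for a direct spinor-norm computation: more hands-on, no base change to a quadratic extension. The computation $\SN(\diag(1/n,n))=\tfrac{4}{n}\equiv n$ per hyperbolic plane (so $n^3\equiv n$ overall) is correct, and your implicit choice of the $n^2$-admissible comparison basis $(n^2 v_1',v_2',v_3',v_4')$ is what produces exactly that torus element; it would be worth one sentence making that choice explicit.

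One minor flaw: the parenthetical remark that a negative sign of $n$ ``will be absorbed at the end by composing the resulting isogeny with $[-1]_Y$'' is wrong --- $[-1]_Y^*$ acts as $+\id$ on $\rH^2$, so this composition does not alter $f^*|_{\rH^2}$. Fortunately, no correction is needed: the statement of Part (2) only asserts $\varphi=f^*/m$ for \emph{some} integer $m$ (here $m=N^2n$, possibly negative) with $\deg f=m^2$, so the conclusion holds as stated with that remark simply deleted.
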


\begin{proof}
Under the assumption of $(1)$, we can find a $d$-admissible isomorphism $\psi$ by applying the Lemma \ref{lemma:spinornormone}. It remains to prove that $\psi$ preserves the Hodge structure, which is essentially the same as in \cite[Theorem 1]{Shioda78}.

For $(2)$, we suppose the spinor norm $\SN(\varphi)=n \QQ^{*2} \in \QQ^*/\QQ^{*2}$. Let $E= \QQ(\sqrt{n})$. We can see that the base change $\rH^2(X,E) \xrightarrow{\sim} \rH^2(Y,E)$ is a Hodge isometry with coefficients in $E$ such that $\SN(\varphi) =1 \in E^*/(E^*)^2$. Then by applying Lemma \ref{lemma:spinornormone}, we will obtain an admissible (fixing the admissible bases for $\rH^1(X,\QQ)$ and $\rH^1(Y,\QQ)$) Hodge isomorphism $\psi \colon \rH^1(X,E) \xrightarrow{\sim} \rH^1(Y,E)$. Let
\[
\sigma\colon a+ b\sqrt{n} \rightsquigarrow a -b \sqrt{n}
\]be the generator of $\Gal(E/\QQ)$. As we have fixed the $\QQ$-linear admissible bases, the wedge map
\[
\SL_4(E) \xrightarrow{\wedge^2} \SO(\Lambda)(E)
\]
is defined over $\QQ$, and so is $\sigma$-equivariant. Let $g$ be the element in $\SL_4(E)$ that corresponds to $\psi$. As $\wedge^2(g) \in \SO(\Lambda) \subset \SO(\Lambda_E)$, we can see
\[
(\wedge^2(\sigma(g)) =\sigma(\wedge^2(g)) = \wedge^2(g).
\]
which implies that $\sigma(g) g^{-1} = \pm \id_4$ since $\ker(\wedge^2) =\set{\pm \id_4} $. If $\sigma(g) = g$, then $g \in \SL_4(\QQ)$ and the statement is trivially valid. If $\sigma(g) = -g$, then $g_0=\sqrt{n} g$ is lying in $\GL_4(\QQ)$.  Let
\[
\psi_0 \colon \rH^1(X,\QQ) \to \rH^1(Y,\QQ)
\]
be the corresponding element of $g_0$ in $\Iso^{\ad,(n^2)}\left(\rH^1(X,\QQ), \rH^1(Y,\QQ)\right)$. As $\wedge^2 \psi_0=n \varphi$ is a Hodge isogeny, part (1) then implies that $\psi_0$ is also a Hodge isomorphism. Thus, $\psi_0$ increases to a quasi-isogeny $f_0 \colon Y \to X$ and we have 
\[
\varphi = \wedge^2(\psi)= \frac{f_0^*}{n} \colon \rH^2(X,\QQ) \to \rH^2(Y,\QQ).
\]
Replacing $f_0$ by multiplication $m f_0$ for some integer $m$, we can get an isogeny of degree $(m^2n)^2$.
\end{proof}
\begin{remark}
If a Hodge isometry $\psi \colon \rH^2(X,\QQ) \xrightarrow{\sim} \rH^2(Y,\QQ)$ is not admissible, that is, its determinant is $-1$ with respect to some admissible bases, then we can take its composition with the isometry $\psi^{\cP}$ induced by the Poincar\'e bundle as in Example \ref{ex:poincareisomorphism}. After that, we can see that $\psi^{\cP} \circ \psi$ is admissible and is induced by an isogeny $f \colon \widehat{Y} \to X$.
\end{remark}
\subsection{$\ell$-adic and $p$-adic Shioda's trick}
For the integral $\ell$-adic \'etale cohomology, we have the following statement similar to Shioda's trick for integral Betti cohomology.

\begin{proposition}[$\ell$-adic  Shioda's trick]
\label{prop:etseconcdtofirst}
Suppose $\ell \neq 2$. For any $d$-admissible $\Zl$-linear isomorphism
\[
\varphi_{\ell} \colon \rH^2_{\et}(Y_{k^s},\Zl) \xrightarrow{\sim} \rH^2_{\et}(X_{k^s},\Zl),
\]
there are an integer $u$ and a $(u^2d)$-admissible $\Zl$-isomorphism $\psi_{\ell}$, such that $\wedge^2(\psi_{\ell}) = u \varphi_{\ell}$. Moreover, if  $\varphi_{\ell}$ is $\Gal(k^s/k)$-equivariant, then $\psi_{\ell}$ is also $\Gal(k^s/k)$-equivariant after replacing $k$ with some finite extension.
\end{proposition}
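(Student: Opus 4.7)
The plan is to transfer the Hodge-theoretic Shioda trick of Proposition~\ref{prop:hodgeisogeny} to the integral $\ell$-adic setting. First I would fix admissible $\Zl$-bases of $\rH^1_{\et}(X_{\bar{k}},\Zl)$ and $\rH^1_{\et}(Y_{\bar{k}},\Zl)$ as in \S\S\ref{subsection:admissiblebasis}, and let $\Lambda$ denote the associated wedge-square lattice. A reference element $\psi_0 \in \Iso^{\ad,(d)}\bigl(\rH^1_{\et}(Y_{\bar{k}},\Zl),\rH^1_{\et}(X_{\bar{k}},\Zl)\bigr)$ is produced by rescaling a single basis vector so that the top wedge has trace $d$. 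Writing $\varphi_\ell = \wedge^2(\psi_0) \circ g$ then identifies $\varphi_\ell$ with a unique $g \in \SO(\Lambda)(\Zl)$, and the task reduces to producing $h \in \SL_4(\Zl)$ with $\wedge^2(h) = \pm g$; setting $\psi_\ell = \psi_0 \circ h$ will be the desired lift.

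The key tool is the exact sequence of Lemma~\ref{lemma:ellimageofSL},
\[
1 \to \{\pm\id_4\} \to \SL_4(\Zl) \xrightarrow{\wedge^2} \SO(\Lambda)(\Zl) \xrightarrow{\SN} \Zl^{*}/(\Zl^{*})^{2},
\]
whose last term has order $2$ for $\ell$ odd by Remark~\ref{rmk:trickwitt}. By Lemma~\ref{lemma:spinornormone}, $g$ lifts through $\wedge^2$ precisely when $\SN(g) = 1$. The decisive step is to compute $\SN(-\id_\Lambda)$ via the Cartan--Dieudonn\'e decomposition (Remark~\ref{rmk:CDdecompositionSpinornorm}): diagonalising the hyperbolic decomposition $\Lambda \cong 3H$ over $\Zl$ yields an orthogonal basis with $q$-values in $\{2,-2\}$, and decomposing $-\id_\Lambda$ into the six reflections through these axes gives
\[
\SN(-\id_\Lambda) \;=\; (2)^{3}(-2)^{3} \;=\; -64 \;\equiv\; -1 \pmod{(\Zl^{*})^{2}},
\]
which represents the non-trivial coset of $\Zl^{*}/(\Zl^{*})^{2}$. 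Consequently $\SN(-g) = \SN(-\id_\Lambda)\cdot\SN(g)$ forces exactly one of $g,\,-g$ to lie in $\ker(\SN)$, and Lemma~\ref{lemma:spinornormone} delivers the required $h$.

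For the Galois-equivariance assertion, suppose $\varphi_\ell$ is $G_k$-equivariant. Any two lifts of $\pm\varphi_\ell$ differ by an element of $\ker(\wedge^2) = \{\pm\id_4\}$, so $\sigma \mapsto (\sigma\psi_\ell)\psi_\ell^{-1}$ defines a continuous character $\chi \colon G_k \to \mu_2$; its kernel is an open subgroup of $G_k$ of index at most $2$, and replacing $k$ by the finite separable extension cut out by $\ker(\chi)$ makes $\psi_\ell$ Galois-equivariant. The main obstacle in the argument is pinning down $\SN(-\id_\Lambda)$ and checking that it represents the non-trivial class in $\Zl^{*}/(\Zl^{*})^{2}$; once this spinor-norm identity is in hand, the rest of the proof is formal bookkeeping with the exact sequence of Lemma~\ref{lemma:ellimageofSL} combined with elementary Galois descent.
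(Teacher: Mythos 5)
Your strategy follows the paper's proof (fix a reference $\psi_0$, identify $\varphi_\ell$ with $g\in\SO(\Lambda)(\Zl)$, use the exact sequence of Lemma \ref{lemma:ellimageofSL} and Lemma \ref{lemma:spinornormone}, and finish with Galois descent), and you rightly make explicit a step the paper's one-line argument leaves implicit: for the dichotomy ``one of $\pm\varphi_\ell$ has trivial spinor norm'' to follow from $\Zl^*/(\Zl^*)^2$ having two elements, one must check that $\SN(-\id_\Lambda)$ represents the nontrivial coset. Your Cartan--Dieudonn\'e computation $\SN(-\id_\Lambda)=(2)^3(-2)^3=-64\equiv -1$ in $\Zl^*/(\Zl^*)^2$ is correct; indeed $\SN(-\id_\Lambda)=\disc(\Lambda)=-1$ up to squares.

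The gap is in the conclusion you draw from it. The class of $-1$ is nontrivial in $\Zl^*/(\Zl^*)^2$ only when $\ell\equiv 3\pmod 4$. If $\ell\equiv 1\pmod 4$ then $-1$ is a square in $\Zl^*$ by Hensel's lemma, so $\SN(-\id_\Lambda)=1$ and $\SN(-g)=\SN(g)$ for every $g\in\SO(\Lambda)(\Zl)$. Since $\SN$ is surjective on $\SO(\Lambda)(\Zl)$ (\cf Remark \ref{rmk:trickwitt}), there exist $d$-admissible $\varphi_\ell$ with $\SN(\varphi_\ell)\neq 1$, and for such $\varphi_\ell$ neither it nor $-\varphi_\ell$ lies in the image of $\wedge^2\colon\SL_4(\Zl)\to\SO(\Lambda)(\Zl)$, so Lemma \ref{lemma:spinornormone} gives nothing. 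Hence for $\ell\equiv 1\pmod 4$ the argument, and indeed the proposition as stated, does not go through; the paper's own proof has the same implicit gap (it asserts the dichotomy without computing $\SN(-\id_\Lambda)$). A repair would require either the restriction $\ell\equiv 3\pmod 4$, or the extra hypothesis $\SN(\varphi_\ell)=1$ (which does hold for the geometrically arising isometries used in Corollary \ref{cor:shiodatrickforisog}, since they admit lifts to $\rH^1$ by construction). Your Galois-equivariance argument at the end is correct and matches the paper's.
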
 
\begin{proof}
One can choose an element $u \in (\ZZ\setminus \set{0}) \cap \ZZ_{\ell}^* $ that is not a square in $\ZZ_{\ell}$, e.g., those satisfying equation $u^{\frac{\ell -1}{2}} \equiv -1 \mod \ell$ as $\ell \neq 2$. As $\Zl^*/(\Zl^*)^2 \cong \set{\pm 1}$ for any $\ell \neq 2$, $\varphi_{\ell}$ or $u \varphi_{\ell}$ is of spinor norm one. Then the first statement follows from Lemma \ref{lemma:spinornormone}.

Suppose $\varphi_{\ell}$ is $\Gal(k^s/k)$-equivariant. We may assume $\wedge^2(\psi_{\ell}) = \varphi_{\ell}$ for simplicity. For any $g \in \Gal(k^s/k)$, we have \[\wedge^2(g^{-1} \psi_{\ell} g) = g^{-1} \wedge^2(\psi_{\ell}) g =\wedge^2(\psi_\ell).\]Therefore,  $g^{-1}\psi_{\ell} g = \pm \psi_{\ell}$.  By passing to a finite extension $k'/k$, we always have $g^{-1}\psi_{\ell} g =  \psi_{\ell}$ for all $g\in \Gal(k^s/k')$ which proves the assertion.  
\end{proof}

For $F$-crystals attached to abelian surfaces, we can also use Shioda's trick.

\begin{proposition}[$p$-adic Shioda's trick]
\label{prop:cryssecondtofirst}
Let $k$ be a finite field or an algebraically closed field, such that $\cha(k) =p >2$. For any $d$-admissible $W$-linear isomorphism \[\varphi_p \colon \rH^2_{\crys}(Y/W) \xrightarrow{\sim} \rH^2_{\crys}(X/W),\] there exist an integer $u$ and a $W$-linear isomorphism $\psi_p \colon \rH^1_{\crys}(Y/W)\xrightarrow{\sim} \rH^1_{\crys}(X/W)$ that is $(u^2d)$-admissible, satisfying $\wedge^2(\psi_p) = u\varphi_p$. Furthermore, if $k$ is algebraically closed, then $u =1$.

Moreover, if $\varphi_p$ is compatible with Frobenius and $\FF_{p^2} \subseteq k$, then there is $\xi \in \ZZ_{p^2}^* \subseteq W(k)$ such that $\xi \psi_p$ is compatible with Frobenius and $\xi^2 \in \ZZ_p^*$.
\end{proposition}

\begin{proof} 
The first statement follows from a similar reason as in Proposition \ref{prop:etseconcdtofirst} as $W^*/(W^*)^2 \subseteq \set{1, \epsilon}$ (see Remark \ref{rmk:wittvector}). 

For the second statement, we assume $\wedge^2(\psi_p) = \varphi_p$. If $\varphi_p$ commutes with the Frobenius action, then we have
\[
\wedge^2( C_X^{-1} \cdot \psi_p^{(1)} \cdot C_Y) = \varphi_p.
\]
as in \S \ref{ex:adm-crys}. 
Thus $C_X^{-1}\cdot \psi_p^{(1)}\cdot C_Y = \pm \psi^{(1)}_p$, which implies
\[
\psi_p \circ F_X^{(1)} = \pm F_Y^{(1)} \circ \psi_p^{(1)}
\]by Lemma \ref{lemma:Frobeniuslinearalg}.

If $F_X^{(1)} \circ \psi_p^{(1)} = \psi_p \circ F_Y^{(1)}$, then we need to do nothing. If $F_X^{(1)} \circ \psi_p^{(1)} = -\psi_p \circ F_Y^{(1)}$, then we can take $\xi \in \ZZ_{p^2}^* \subseteq W(k)$ such that $\xi^{p-1} = -1$. This implies
\[
F_X^{(1)} \circ (\xi \psi_p)^{(1)} = \xi^p F_X^{(1)} \circ \psi = (\xi \psi_p) \circ F_Y^{(1)}. 
\] 
Note that $\xi^2 \in \ZZ_{p}^*$ as $\sigma(\xi^2) = \xi^2$ and $\xi^{2p+2} =1$. Therefore, we can conclude.
\end{proof}

Combined with Tate's isogeny theorem, we have the following direct consequences of Propositions \ref{prop:etseconcdtofirst} and \ref{prop:cryssecondtofirst}. It includes a special case of Tate's conjecture.
\begin{corollary}\label{cor:shiodatrickforisog}
Suppose $k$ is a finitely generated field over $\mathbb{F}_p$ with $p>2$. Let $\ell \neq 2$ be a prime not equal to $p$.
\begin{enumerate}
    \item  For any admissible isometry of $\Gal(k^s/k)$-modules 
\[
\varphi_{\ell} \colon \rH^2_{\et}(Y_{k^s},\Zl) \xrightarrow{\sim} \rH^2_{\et}(X_{k^s},\Zl),
\]
we can find a $\Zl$-isogeny $f_{\ell} \in \Hom_{k'}(X_{k'}, Y_{k'})\otimes \Zl$
for some finite extension $k'/k$, which induces $u\varphi_{\ell}$ for some integer $u$ prime-to-$\ell$. In particular, $\varphi_{\ell}$ is algebraic.
    \item If $k$ is finite, then for any admissible isometry
    \[
\varphi_{p} \colon \rH^2_{\crys}(Y/W) \xrightarrow{\sim} \rH^2_{\crys}(X/W),
\]
which is compatible with Frobenius, we can find a $\ZZ_{p}$-isogeny $f_p \in \Hom_{k'}(X_{k'}, Y_{k'}) \otimes \ZZ_{p}$ over some finite extension $k'/k$, such that
\[
\epsilon f_p^*|_{\rH^2_{\crys}(Y/W)} = u\varphi_p
\]
for some prime-to-$p$ integer $u$ and $\epsilon \in \ZZ_p^*$. In particular, $\varphi_p$ is algebraic.
\end{enumerate}
\end{corollary}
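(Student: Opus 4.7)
The overall strategy is transparent: reduce each admissible cohomological isomorphism on $\rH^2$ to an isomorphism on $\rH^1$ via the Shioda's trick established in Propositions~\ref{prop:etseconcdtofirst} and \ref{prop:cryssecondtofirst}, and then invoke Tate's isogeny theorem to realize the latter as a quasi-isogeny between the underlying abelian surfaces.

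For (1), I would apply Proposition~\ref{prop:etseconcdtofirst} to $\varphi_{\ell}$, which, after replacing $k$ by a suitable finite extension $k'$, yields an admissible $G_{k'}$-equivariant $\Zl$-linear isomorphism $\psi_{\ell}\colon \rH^1_{\et}(Y_{k^s},\Zl)\xrightarrow{\sim}\rH^1_{\et}(X_{k^s},\Zl)$ satisfying $\wedge^2\psi_{\ell}=\pm\varphi_{\ell}$. Dualizing gives a $G_{k'}$-equivariant isomorphism of Tate modules $T_{\ell}X_{k'}\xrightarrow{\sim}T_{\ell}Y_{k'}$. Tate's isogeny theorem for abelian varieties over finitely generated fields of positive characteristic (Tate, Zarhin) then asserts
\[
\Hom_{k'}(X_{k'},Y_{k'})\otimes\Zl \xrightarrow{\sim} \Hom_{G_{k'}}(T_{\ell}X,T_{\ell}Y),
\]
and produces the desired $\Zl$-quasi-isogeny $f_{\ell}$ whose pullback on $\rH^2$ is $\pm\varphi_{\ell}$. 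The verification of the sign and of the compatibility with the Poincar\'e pairing is then a bookkeeping exercise.

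For (2), I would first apply Proposition~\ref{prop:cryssecondtofirst} (together with Remark~\ref{rmk:pshiodatrick}) to obtain an admissible $W$-linear isomorphism $\rho\colon \rH^1_{\crys}(Y/W)\xrightarrow{\sim}\rH^1_{\crys}(X/W)$ with $\wedge^2\rho=\pm\varphi_W$, but a priori commuting only with the second iterate $F^2$ of the Frobenius. To place $\rho$ in a category to which Tate's theorem applies, I would pass to a finite extension $k'/k$ containing $\FF_{p^2}$, so that $F^2$ becomes $\ZZ_{p^2}$-linear on crystalline cohomology. Under this base change, $\rho$ extends to an isomorphism of Dieudonn\'e modules after tensoring with $\ZZ_{p^2}=W(\FF_{p^2})$. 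Invoking de Jong's extension of Tate's isogeny theorem for $p$-divisible groups over finitely generated fields, namely
\[
\Hom_{k'}(X_{k'},Y_{k'})\otimes\Zp \xrightarrow{\sim} \Hom_F(\DD(Y_{k'}[p^\infty]),\DD(X_{k'}[p^\infty])),
\]
and extending scalars to $\ZZ_{p^2}$ yields the desired $\ZZ_{p^2}$-quasi-isogeny $f_p$ inducing $\rho$, and hence $\pm\varphi_W$, on crystalline cohomology.

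The main obstacle is entirely in the crystalline half: Proposition~\ref{prop:cryssecondtofirst} only produces an $F^2$-equivariant $\rho$, whereas Tate's theorem requires honest $F$-equivariance. Enlarging the coefficient ring from $\Zp$ to $\ZZ_{p^2}$ absorbs the discrepancy between $F$ and $F^2$ and thereby explains the appearance of $\ZZ_{p^2}$ in the statement. Making this descent precise—ensuring that admissibility is preserved through the Dieudonn\'e functor and that the correspondence between morphisms of $p$-divisible groups and quasi-isogenies of abelian surfaces behaves well under the coefficient extension $\Zp\hookrightarrow\ZZ_{p^2}$—is the delicate point of the argument.
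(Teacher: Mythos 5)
Your part (1) is essentially the paper's argument: apply Proposition~\ref{prop:etseconcdtofirst} to get a Galois-equivariant $\psi_{\ell}$ on $\rH^1$ after a finite extension, then invoke Tate's isogeny theorem to produce the $\Zl$-quasi-isogeny. That half is fine.

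Part (2) has a genuine gap exactly where you flag the delicacy. From Proposition~\ref{prop:cryssecondtofirst} you get $\rho$ with $F_X\circ\rho=\pm\rho\circ F_Y$, i.e.\ only $F^2$-equivariance. You then assert that enlarging scalars to $\ZZ_{p^2}$ ``absorbs the discrepancy between $F$ and $F^2$'' because $F^2$ is $\ZZ_{p^2}$-linear on crystalline cohomology. That observation is true but does not help: de Jong's theorem identifies $\Hom_{k'}(X,Y)\otimes\Zp$ with morphisms of Dieudonn\'e modules commuting with $F$ itself, not with $F^2$, and an $F^2$-equivariant map is simply not a morphism of Dieudonn\'e modules. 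No base change of coefficients turns an $F^2$-equivariant map into an $F$-equivariant one. The paper instead exhibits an explicit scalar correction: pick $\xi\in W$ with $\xi^{p-1}+1=0$ (after ensuring $\ZZ_{p^2}\subset W(k')$), so that $\sigma(\xi)=\xi^p=-\xi$; then $F_X\circ(\xi\rho)=\sigma(\xi)\,F_X\circ\rho=(-\xi)(-\rho\circ F_Y)=(\xi\rho)\circ F_Y$, so $\xi\rho$ \emph{is} $F$-equivariant over $\Zp$. By de Jong's bijection $\xi\rho$ comes from $f_0\in\Hom_{k'}(X,Y)\otimes\Zp$, and then $f_p=f_0/\xi$ lies in $\Hom_{k'}(X,Y)\otimes\ZZ_{p^2}$ because $\xi\in\ZZ_{p^2}^\ast$. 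This is the actual reason $\ZZ_{p^2}$ appears; without the $\xi$-twist your argument cannot feed $\rho$ into de Jong's theorem at all. Inserting this step would make your proof correct and essentially identical to the paper's.
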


\begin{proof}
For $(1)$, Proposition \ref{prop:etseconcdtofirst} implies that there is an isomorphism \[
\psi_{\ell} \colon \rH^1_{\et}(Y_{k^s}, \Zl) \xrightarrow{\sim} \rH^1_{\et}(X_{k^s}, \Zl),
\]
that induces $u \varphi_{\ell}$, which is $\Gal(k^s/k)$-equivariant after a finite extension of $k$. Then $f_{\ell}$ exists by the following canonical bijection (\cf\cite{Zarhin76} and \cite[VI, \S 3 Theorem 1]{rationalpoints})
\[
\Hom^0(X,Y) \otimes \Zl \xrightarrow{\sim} \Hom_{\Gal(k^s/k)}\left( \rH^1_{\et}(Y_{k^s},\Zl), \rH^1_{\et}(X_{k^s},\Zl)\right)
\label{eq:isogenyetale}.
\]

For $(2)$, we may assume that $\ZZ_{p^2} \subseteq W(k)$ after taking a finite extension of $k$. The Proposition \ref{prop:cryssecondtofirst} implies that there is an isomorphism \[ \psi_p \colon \rH^1_{\crys}(Y/W)\xrightarrow{\sim} \rH^1_{\crys}(X/W) \] that induces $ u \varphi_p$, and $\xi \in \ZZ_{p^2}^*$ such that $\xi \psi_p$ is compatible with Frobenius.

Since $k$ a finite field, there are canonical isomorphisms
\begin{equation}\label{eq:deJongthm}
  \Hom^0(X,Y) \otimes \Zp \xrightarrow{\sim} \Hom_k\left(X[p^{\infty}], Y[p^{\infty}]\right) \xrightarrow{\sim} \Hom_{F,V}\left(\rH^1_{\crys}(Y/W), \rH^1_{\crys}(X/W)\right).
\end{equation}
Here the first isomorphism is from $p$-adic Tate's isogeny theorem (\cf\cite[Theorem 2.6]{deJong}) and the second from the faithfulness of Dieudonn\'e functor over $W$ (\cf\cite[Theorem]{deJong2}).
The canonical bijection \eqref{eq:deJongthm} implies that $\xi \psi_p$ is induced by a $\Zp$-isogeny $f_p \in \Hom^0(X,Y) \otimes \Zp$. Therefore
\[
f_p^*|_{\rH^2_{\crys}(Y/W)} = \xi^2 u \varphi_p.
\]
The $\ZZ_p$-isogeny $f_p$ is what we require.
\end{proof}

\begin{remark}
In \cite{Zarhin17}, Zarhin introduces the notion of \emph{almost isomorphism}. Two abelian varieties over $k$ are called almost isomorphic if their Tate modules $T_{\ell}$ are isomorphic as Galois modules (replaced by $p$-divisible groups when $\ell=p$). The proposition \ref{prop:etseconcdtofirst} and \ref{prop:cryssecondtofirst} imply that it is possible to characterize almost isomorphic abelian surfaces by their $2^{\text{nd}}$-cohomology groups.
\end{remark}

\section{Derived isogeny in characteristic zero}\label{section:derivedchar0}

In this section, we follow \cite{LV19} and \cite{Hu19} to prove the twisted Torelli theorem for abelian surfaces over algebraically closed fields of characteristic zero.

\subsection{Over $\CC$: Hodge isogeny versus derived isogeny}\label{subsec:hodgederived}
Let $X$ and $Y$ be complex abelian surfaces.
 Throughout this section, let $\Lambda = U^{\oplus 3}$ be the direct sum of three hyperbolic lattices. 
\begin{definition}\label{def:reflectiveHodge}
 A rational Hodge isometry $\varphi \colon \rH^2(X,\mathbb{Q}) \to \rH^2(Y,\mathbb{Q})$ is called \emph{reflective} if it is a reflection on $\Lambda$ along a non-isotropic vector $b\in \Lambda$: 
\[
 \bR_b \colon \Lambda_{\mathbb{Q}} \xrightarrow{\sim} \Lambda_{\mathbb{Q}} \quad x  \mapsto x - \frac{2(x,b)}{(b,b)} b,
\]
after choosing the markings $\rH^2(X,\ZZ)\cong \Lambda$ and $\rH^2(Y,\ZZ)\cong \Lambda$.
\end{definition}

A key lemma is 
\begin{lemma}\label{lemma:reflectiveHodgeisometry}
Any reflective Hodge isometry $\varphi$ induces a Hodge isometry on twisted Mukai lattices
\[
\widetilde{\varphi} \colon \widetilde{\rH}(X,\mathbb{Z};B) \to \widetilde{\rH}(Y,\mathbb{Z};B'),
\]
for some $B\in \rH^2(X,\QQ)$ and $B'= -\varphi(B)$ such that the restriction of $\widetilde{\varphi}_{\QQ} \colon \widetilde{\rH}(X,\QQ) \xrightarrow{\sim}\widetilde{\rH}(Y,\QQ)$ on $\rH^2(X,\QQ)$ is equal to $\varphi$.
\end{lemma}
\begin{proof}
This is due to the work in \cite[\S 1.2]{Hu19}. Since this is a purely linear-algebraic argument for twisted Mukai lattices, it works for abelian surfaces without changes.
 Let us briefly recall the construction of $\widetilde{\varphi}$. By definition, there are markings $f\colon \rH^2(X,\ZZ)\cong \Lambda$ and $g\colon \rH^2(Y,\ZZ)\cong \Lambda$ such that the composition
\[ \Lambda_\QQ \xrightarrow{f^{-1}} \rH^2(X,\QQ)\xrightarrow{\varphi} \rH^2(Y,\QQ)\xrightarrow{g}\Lambda_\QQ \]
is a reflection $\bR_b$, with $b\in \Lambda$ a primitive vector. 

Let $B=\frac{f^{-1}(b)}{n}\in \rH^2(X,\QQ)$ and $B'=\frac{g^{-1}(b)}{n}\in \rH^2(Y,\QQ)$, where $n=\frac{b^2}{2}$.  The map
\[
\widetilde{\varphi} \colon \widetilde{\rH}(X,\mathbb{Z};B) \to \widetilde{\rH}(Y,\mathbb{Z};B'),
\]
defined by sending a vector $(r,c,s)$   to $(n(B,c)-r-ns, \varphi(c)-n((B,c)-s)B',  -s)$ is a Hodge isometry. In particular,
\[
\begin{aligned}
    (0, c, (B,c)) &\mapsto (0, \varphi(c), (B', \varphi(c)),\\
    (0,0,1) &\mapsto (-n, -n B', -1),
\end{aligned}
\]
which gives last assertion.
\end{proof}
 
The following result characterizes the reflective Hodge isometries between abelian surfaces. The idea of the proof is based on \cite[Theorem 1.1]{Hu19}, along with some necessary modifications for abelian surfaces.
\begin{theorem}\label{thm:reflectivetwistedderived}
Let $X$ and $Y$ be two complex abelian surfaces. If there is a reflective Hodge isometry
\[
\varphi \colon \rH^2(X,\mathbb{Q}) \xrightarrow{\sim} \rH^2(Y,\mathbb{Q}),
\]
 then up to sign, $\varphi$ is induced (in the sense of \S\ref{sec:cohomologyrealization}) by a  derived isogeny
\begin{equation}\label{eq:ref-de}
    \D^b(X) \sim \D^b(Y).
\end{equation}

\end{theorem}

\begin{proof}
According to Lemma \ref{lemma:reflectiveHodgeisometry}, there is a Hodge isometry $$\widetilde{\varphi} \colon \widetilde{\rH}(X,\mathbb{Z};B) \xrightarrow{\sim} \widetilde{\rH}(Y,\mathbb{Z};B'),$$
whose restriction on $\rH^2(X,\QQ)$ is just $\varphi$.
Let $v_{B'} = (-n, -n B', -1)$ be the image of the Mukai vector $(0,0,1)$ under $\widetilde{\varphi}$. 
From our construction, the Mukai vector \[v= \exp(-B')\cdot v_{B'} = (-n,0,0) \in \widetilde{\rH}(Y,\mathbb{Z})\] satisfies $v_{B'} = \exp(B') \cdot v$. We can assume that $v$ is positive (see Definition \ref{def:positive}) up to the shift of $\rD^{(1)}(\sY)$. 

Let $\sY \to Y$ be a $\mathbb{G}_m$-gerbe  which admits a $\bB$-field lift $B'$. For some $v$-generic polarization $H$, the moduli stack $\srM_H(\sY,v)$ of $\sY$-twisted sheaves on $Y$ with Mukai vector $v$ forms a $\mathbb{G}_m$-gerbe on its coarse moduli space $M_{H}(\sY,v)$. Let $\cE$ be a universal $(1,1)$-twisted sheaf on $\sY \times \srM_H(\sY,v)$. It induces a twisted Fourier--Mukai transform \[     \Phi^{\cE} \colon \D^{(-1)}(\srM_{H}(\sY,v)) \to \D^{(1)}(\sY), \] (\cf \cite[Theorem 4.3]{yo06}) and a Hodge isometry \[
\varphi^{\cE} \colon \widetilde{\rH}(M_H(\sY,v),\mathbb{Z};B'') \xrightarrow{\sim}\widetilde{\rH}(Y ,\mathbb{Z};B'),
\]
where $B''$ is a $\bB$-field lift of $\srM_{H}(\sY,v)^{(-1)}\to M_{H}(\sY,v)$.
The composition \begin{equation}\label{mod-iso}
(\varphi^{\cE})^{-1} \circ \widetilde{\varphi} \colon \widetilde{\rH}(X,\mathbb{Z};B) \xrightarrow{\sim} \widetilde{\rH}(M_H(\sY,v),\mathbb{Z};B''),\end{equation}
defines a Hodge isometry, which maps the Mukai vector $(0,0,1)$ to $(0,0,1)$ and preserves the Mukai pairing. In addition, it sends $(1,0,0)$ to $(1,b,\frac{b^2}{2})$ for some $b \in \rH^2(Y,\mathbb{Z})$. Changing $B''$ by $B'' + b$, one can obtain a Hodge isometry that simultaneously maps $(1,0,0)$ to $(1,0,0)$ and $(0,0,1)$ to $(0,0,1)$. This restricts to a Hodge isometry
\begin{equation}\label{eq:Hodgeisometry}
\rH^2(X,\mathbb{Z}) \xrightarrow{\sim} \rH^2(M_{H'}(\sY,v),\mathbb{Z}).
\end{equation}

If Hodge isometry \eqref{eq:Hodgeisometry} is admissible, then we can apply Shioda's Torelli Theorem to the abelian surfaces (Theorem \ref{thm:shiodaoriginal}) to conclude that there is an isomorphism \[
f\colon M_{H'}(\sY,v) \xrightarrow{\sim} X\]
such that $(\varphi^{\cE})^{-1} \circ \widetilde{\varphi} = f^*$ up to sign. Take $\sX\to X$  as the $\GG_m$-gerbe  $\sM_{H'}(\sY,v)^{(-1)}\to M_{H'}(\sY,v)$.  Then the Hodge realization of the derived equivalence
\begin{equation}\label{eq:DerivedIso1}
\Phi^{\cE} \circ f^* \colon \rD^{(1)}(\sX) \xrightarrow{\sim} \rD^{(1)}(\sY)
\end{equation}
is $\widetilde{\varphi}$ up to sign. 

Otherwise, the composition
\[
\rH^2(\widehat{X},\ZZ) \xrightarrow{-\D} \rH^2(X, \ZZ) \xrightarrow{\sim} \rH^2(M_H(\sY,v),\ZZ)
\]
is admissible as explained in Example \ref{ex:poincareisomorphism}, which can be realized as the pull-back under an isomorphism $f\colon M_{H}(\sY,v) \xrightarrow{\sim} \widehat{X}$ up to sign. Thus, the Hodge realization of derived equivalence $f^* \circ \Phi^{\cP} \colon \rD^b(X) \xrightarrow{\sim} \rD^b(M_H(\sY,v))$ yields Hodge isometry \eqref{eq:Hodgeisometry}, where $\cP$ is the Poincar\'e bundle. We can consider the following derived isogeny
\begin{equation}\label{eq:DerivedIso2}
    \begin{aligned}
        \D^b(X) \xrightarrow{f^* \circ \Phi^{\cP}} &\D^b(M_H(\sY,v)) & \\
        & \D^{(-1)}(\sM_H(\sY,v)) \xrightarrow{\Phi^\cE} \D^{(1)}(\sY).
    \end{aligned}
\end{equation}
From the construction, its rational Hodge realization on second cohomology yields $\varphi$ up to sign.
\end{proof}

\begin{remark}\label{rmk:primetop}
If $\varphi$ is induced from a reflection of a vector with norm $2n$,  let $\sX\to X$ and $\sY\to Y$ be the equivalent twisted abelian surfaces obtained in Theorem \ref{thm:reflectivetwistedderived}. Then we have \[ [\sX]^n = \exp(n B)  =1  \in \Br(X),\] which implies $[\sX] \in \Br(X)[n]$. Similarly, the order of $[\sY]$ divides $n$. 
\end{remark}

Next,  we are going to show that any rational Hodge isometry can be decomposed into a chain of reflective Hodge isometries. This is a special case of Cartan--Dieudonn\'e theorem which says that any element $g \in \SO(\Lambda_{\mathbb{Q}})$ can be decomposed as products of reflections:
\begin{equation}\label{eq:decomposition}
g = \bR_{b_1} \circ \bR_{b_2} \circ \cdots \circ \bR_{b_n},
\end{equation}
such that $b_i \in \Lambda$, and $(b_i)^2\neq 0$. From the surjectivity of period map \cite[Theorem II]{Shioda78},  for any  rational Hodge isometry
\[
\rH^2(X, \mathbb{Q}) \xrightarrow{\sim} \rH^2(Y,\mathbb{Q}),
\]
we can find a sequence of abelian surfaces $\set{X_{i}}$ with $\Lambda$-markings  and Hodge isometries $$\varphi_{i} \colon \rH^2(X_{i-1},\mathbb{Q}) \xrightarrow{\sim} \rH^2(X_{i},\mathbb{Q}),$$ where $X_{0}=X$ and $X_{n}=Y$, such that $\varphi_{i}$ is induced by some reflection $\bR_{b_i}\in \rO(\Lambda\otimes \QQ)$. We can arrange them as \eqref{eq:zigzagtwsted}:
\begin{equation}\label{eq:zigzagHodge}
\begin{tikzcd}[row sep=0cm, column sep=small]
\rH^2(X,\mathbb{Q})\ar[r,"\varphi_{1}"] &\rH^2(X_1,\mathbb{Q}) &  &\\
& \rH^2(X_1,\mathbb{Q}) \ar[r,"\varphi_{2}"]&  \rH^2(X_2,\mathbb{Q}) &\\
& & \vdots & \\
& & \rH^2(X_{n-1},\mathbb{Q})\ar[r,"\varphi_{{n}}"] & \rH^2(Y,\mathbb{Q}).
\end{tikzcd}
\end{equation}

As a consequence, we get 

\begin{corollary}\label{cor:hodge-derived}
If there is a rational Hodge isometry $\varphi:\rH^2(X,\mathbb{Q}) \xrightarrow{\sim} \rH^2(Y,\mathbb{Q})$, then there is a derived isogeny from $X$ to $Y$, which induces $\varphi$ up to sign as in \eqref{eq:zigzagHodge}. 
\end{corollary}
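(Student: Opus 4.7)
The bulk of the work has already been carried out in the discussion preceding the statement: the Cartan--Dieudonn\'e theorem decomposes $\varphi$, viewed via some $\Lambda$-marking as an element of $\Og(\Lambda_{\QQ})$, into a product of reflections $\varphi_{b_n} \circ \cdots \circ \varphi_{b_1}$ along non-isotropic vectors $b_i \in \Lambda$ (clearing denominators freely, as this does not change the reflection), and then Shioda's surjectivity of the period map realizes this decomposition geometrically as the zig-zag \eqref{eq:zigzagHodge} of reflexive Hodge isometries
\[
\psi_{b_i} \colon \rH^2(X_{i-1},\QQ) \xrightarrow{\sim} \rH^2(X_i,\QQ),
\]
with $X_0 = X$ and $X_n = Y$, whose composition (read through the markings) is $\varphi$.

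The plan is then to feed each link of this chain into Theorem \ref{thm:reflexivetwistedderived}: for every $i$, that theorem produces Brauer classes $\alpha_i \in \Br(X_{i-1})$ and $\beta_i \in \Br(X_i)$ together with a twisted derived equivalence
\[
\D^b(X_{i-1},\alpha_i) \xrightarrow{\sim} \D^b(X_i,\beta_i)
\]
whose Hodge realization on $\rH^2$ is precisely $\psi_{b_i}$. Stacking these $n$ equivalences one on top of another yields exactly a zig-zag of the shape \eqref{eq:zigzagtwsted} witnessing a derived isogeny from $X$ to $Y$, and its induced rational Hodge isometry on $\rH^2$ is by construction $\psi_{b_n} \circ \cdots \circ \psi_{b_1} = \varphi$.

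There is essentially no remaining obstacle, as the two ingredients (Cartan--Dieudonn\'e plus Shioda's surjectivity on one side, Theorem \ref{thm:reflexivetwistedderived} on the other) do all the work; the only minor point to observe is that the Brauer classes $\beta_i$ on the target of the $i$-th equivalence and $\alpha_{i+1}$ on the source of the $(i+1)$-st need not agree, but the definition of a derived isogeny in \eqref{eq:zigzagtwsted} explicitly allows for distinct twistings at each junction, so this causes no issue.
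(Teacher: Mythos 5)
Your proof is correct and follows essentially the same route as the paper: decompose $\varphi$ (after choosing a $\Lambda$-marking) into reflections via the Cartan--Dieudonn\'e theorem, use Shioda's surjectivity of the period map to realize each reflection as a reflexive Hodge isometry $\psi_{b_i}$ between actual abelian surfaces $X_{i-1}$ and $X_i$, and then invoke Theorem \ref{thm:reflexivetwistedderived} at each link to produce a twisted derived equivalence with the prescribed Hodge realization; since the definition \eqref{eq:zigzagtwsted} of derived isogeny permits the twisting to change at each intermediate surface, the chain assembles exactly as required. One small refinement worth noting: you correctly place $\varphi$ in $\Og(\Lambda_{\QQ})$ rather than $\SO(\Lambda_{\QQ})$ as the paper writes, which is appropriate since a general rational Hodge isometry need not be admissible and Cartan--Dieudonn\'e applies in the full orthogonal group.
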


\begin{remark}

An application of Corollary \ref{cor:hodge-derived} is that any rational Hodge isometry between abelian surfaces is algebraic, which is a special case of Hodge conjecture on product of two abelian surfaces. 
Unlike the case of K3 surfaces, the Hodge conjecture for product of abelian surfaces was known for a long time. See, for example, \cite[Theorem 3.15]{Ra08}. 
\end{remark}

\begin{corollary}\label{cor:HodgeKummer}
There is a rational Hodge isometry $\rH^2(X,\QQ) \xrightarrow{\sim} \rH^2(Y,\QQ)$ if and only if there is a derived isogeny from $\Km(X)$ to $\Km(Y)$.
\end{corollary}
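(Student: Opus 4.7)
The strategy is to transfer information between the abelian surfaces and their Kummer surfaces at the level of rational Hodge structures on the second cohomology, where the transcendental parts are related by a Tate twist, and then invoke the K3 analogue of Corollary \ref{cor:hodge-derived} (Huybrechts' theorem \cite[Theorem 0.1]{Hu19}).

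For the forward direction, suppose there is a rational Hodge isometry $\psi \colon \rH^2(X,\QQ) \xrightarrow{\sim} \rH^2(Y,\QQ)$. Using the decomposition in Remark \ref{rmk:Kummercohomology}, one has
\[
\rH^2(\Km(X),\QQ) \cong \pi_{X,*}(\widetilde{\sigma}_X^*\rH^2(X,\QQ)) \oplus \Sigma_X, \qquad \rH^2(\Km(Y),\QQ) \cong \pi_{Y,*}(\widetilde{\sigma}_Y^*\rH^2(Y,\QQ)) \oplus \Sigma_Y,
\]
where $\Sigma_X$ and $\Sigma_Y$ are the rational Kummer parts spanned by the sixteen exceptional $(-2)$-curves. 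On the first summands, $\psi$ induces a Hodge isometry after rescaling the pairing uniformly by a factor of $2$; on the second summands, both $\Sigma_X$ and $\Sigma_Y$ are purely algebraic rational quadratic spaces of the same rank and isomorphic discriminant, hence admit a $\QQ$-linear isometry, which is automatically a Hodge isometry since every class is of type $(1,1)$. Taking the direct sum gives a rational Hodge isometry $\rH^2(\Km(X),\QQ) \xrightarrow{\sim} \rH^2(\Km(Y),\QQ)$, and Huybrechts' K3 theorem \cite[Theorem 0.1]{Hu19} produces a derived isogeny $\Km(X) \sim \Km(Y)$.

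For the reverse direction, assume a derived isogeny $\Km(X) \sim \Km(Y)$ is given. By Huybrechts' theorem applied to the K3 surfaces $\Km(X)$ and $\Km(Y)$, one obtains a rational Hodge isometry
\[
\Psi \colon \rH^2(\Km(X),\QQ) \xrightarrow{\sim} \rH^2(\Km(Y),\QQ).
\]
Restricting $\Psi$ to the transcendental parts yields a rational Hodge isometry $\rT(\Km(X)) \otimes \QQ \xrightarrow{\sim} \rT(\Km(Y)) \otimes \QQ$. Since $\rT(\Km(X)) = \rT(X)(2)$ and $\rT(\Km(Y)) = \rT(Y)(2)$ as integral Hodge structures, the factor of $2$ cancels rationally and we obtain a rational Hodge isometry $\rT(X) \otimes \QQ \xrightarrow{\sim} \rT(Y) \otimes \QQ$. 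Finally, by the equivalence $(vi) \Leftrightarrow (viii)$ of Corollary \ref{cor:TDGC} (applied via Witt cancellation of the algebraic parts), this upgrades to a rational Hodge isometry $\rH^2(X,\QQ) \xrightarrow{\sim} \rH^2(Y,\QQ)$.

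The only nontrivial ingredient is the K3 input from \cite{Hu19}; the main technical point on our side is matching the Kummer lattice summands rationally and handling the Tate twist when passing between $\rT(X)$ and $\rT(\Km(X))$. Both are routine once one records the compatibility of the Kummer construction with the decomposition of $\rH^2$.
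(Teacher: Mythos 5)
Your argument is correct and relies on the same three ingredients as the paper's proof—Witt cancellation, Morrison's isometry $\rT(X)(2)\simeq\rT(\Km(X))$, and Huybrechts' rational derived Torelli theorem for K3 surfaces—so it is essentially the same route; the paper simply runs both directions through the transcendental lattices at once. One small slip: in your forward direction the justification that $\Sigma_X$ and $\Sigma_Y$ are $\QQ$-isometric (``same rank and isomorphic discriminant'') is not sufficient to determine a rational quadratic space—the correct reason is that both are the same integral Kummer lattice, hence canonically isometric—but that entire step is superfluous, since you could pass through $\rT$ in the forward direction just as you do in the reverse.
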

\begin{proof}
Any rational Hodge isometry induces a rational isometry of N\'eron--Severi lattice $\NS(X)_{\QQ} \simeq \NS(Y)_{\QQ}$. Let $\rT(-)$ be the transcendental part of $\rH^2(-)$.
 Applying Witt's cancellation theorem, we can see
\[
\rH^2(X,\QQ) \simeq \rH^2(Y,\QQ) \Leftrightarrow \rT(X)_{\QQ} \simeq
 \rT(Y)_{\QQ},
\]
as Hodge isometries. According to \cite[Theorem 0.1]{Hu19}, $\Km(X)$ is derived isogenous to $\Km(Y)$ if and only if there is a Hodge isometry $\rT(\Km(X))_\QQ\simeq \rT(\Km(Y))_\QQ$. 
Then the statement is clear from the fact that there is a canonical integral Hodge isometry $
\rT(X)(2) \simeq \rT(\Km(X))$  (\cf\cite[Proposition 4.3(i)]{morrison84}).
\end{proof}

\subsection{prime-to-$\ell$ Hodge isometries}
\begin{definition}
We say that a rational Hodge isometry 
\[
\varphi \colon \rH^2(X,\QQ) \xrightarrow{\sim} \rH^2(Y,\QQ)
\]
is prime-to-$\ell$ if it  descends to an isometry $\rH^2(X, \ZZ_{(\ell)}) \xrightarrow{\sim} \rH^2(Y,\ZZ_{(\ell)})$. 
\end{definition}

An easy observation is 

\begin{lemma}\label{lemma:primetoellHodgereflective}
Assume $\varphi:\rH^2(X,\QQ)\xrightarrow{\sim} \rH^2(Y,\QQ)$ is a reflective Hodge isometry, induced by a primitive vector $b \in \Lambda$. Then $\varphi$ is   \emph{prime-to-$\ell$} if and only if $\ell \nmid n= \frac{(b)^2}{2}$.
\end{lemma}
\begin{proof}
One direction is obvious. For the other, suppose $\varphi$ is prime-to-$\ell$.  
 By definition, there are markings  $\rH^2(X,\ZZ)\cong \Lambda$ and  $\rH^2(X,\ZZ)\cong \Lambda$ such that the isometry
\[ \Lambda\otimes \QQ\cong \rH^2(X,\QQ)\xrightarrow{\varphi} \rH^2(Y,\QQ)\cong \Lambda\otimes \QQ\]
is the reflection $\bR_b\in \rO(\Lambda\otimes \QQ)$. As $\varphi$ is prime-to-$\ell$, the reflection $\bR_b$ is lying in $\rO(\Lambda\otimes \ZZ_{(\ell)})$.  

If $\ell \mid n$,  one must have $\ell \mid (x,b)$ for any $x\in \Lambda$. However, this is contradictory, as $\Lambda$ is unimodular and any primitive vector has divisibility $1$. 
\end{proof}

Another useful tool is as follows. 

\begin{lemma}[prime-to-$\ell$ Cartan--Dieudonn\'e decomposition] \label{CDP}Let $\Lambda$ be an integral lattice over $\ZZ$ whose reduction mod $\ell$ is still non-degenerate. 
Any orthogonal matrix $A\in \Og(\Lambda)(\ZZ_{(\ell)}) \subset \Og(\Lambda)(\QQ)$, with $(\ell >2)$, can be decomposed into a sequence of prime-to-$\ell$ reflections. 
\end{lemma}
\begin{proof}
To prove the assertion, we will follow the proof of \cite{Sch50} to refine Cartan--Dieudonn\'e decomposition for any field of characteristic $\neq 2$. In general,  if $\Lambda_k$ is a quadratic space on a field $k$ of characteristic $\neq 2$ with the Gram matrix $G$,  let $I$ be the identity matrix.  

The proof of  Cartan--Dieudonn\'e decomposition  in \cite{Sch50} relies on the following facts: for any element $A\in \Og(\Lambda_k)$, we have
\begin{enumerate}
    \item[i)]  $A$ is a reflection if $\rank (A-I)=1$ (\cf\cite[Lemma 2] {Sch50});
    \item[ii)] Suppose that $\rank (A-I) > 1 $. If $S=G(A-I)$ is not skew symmetric, then there exists $a\in \Lambda$  satisfying $a^t Sa \neq 0$ and $$S+S^t\neq \frac{1}{a^tS a}( Sa\cdot a^t S+S^ta\cdot a^tS^t).$$ In this case $\rank (A \bR_b-I)=\rank (A-I)-1$  and $G(A\bR_b-I)$ is not skew symmetric  with $b=(A-I)a$ satisfying $b^2=-2a^tS a$  (\cf\cite[Lemma 4, Lemma 5]{Sch50}).
    \item [iii)] If $S=G(A-I)$ is skew symmetric, then there exists $b\in \Lambda$ such that $G(A\bR_b-I)$ is not skew symmetric (\cf the proof of \cite[Theorem 2]{Sch50}). 
\end{enumerate}
Then we can decompose $A$ as a series of reflections using ii) repeatedly. In our case, it suffices to show that if $k=\QQ$ and $A$ is coprime to $\ell$, i.e. $nA$ is integral for some $n$ coprime to $\ell$, then
\begin{enumerate}
    \item[i')]  $A$ is a prime-to-$\ell$ reflection if $\rank (A-I)=1$;
    \item[ii')]  Suppose that $\rank (A-I) > 1 $. If the matrix $S=G(A-I)$ modulo $\ell$ is not skew symmetric,  then there exists a vector $a\in \Lambda$ satisfying $\ell \nmid a^t Sa  $, and   
    \begin{equation*}
        S+S^t\neq \frac{1}{a^tS a}( Sa\cdot a^t S+S^ta\cdot a^tS^t).
    \end{equation*}
In this case, $\bR_b$ is prime-to-$\ell$  with $b=(A-I)a$,  $\rank (A \bR_b-I)=\rank (A-I)-1$ and  $G(A \bR_b-I)$ is not skew symmetric;  
    \item [iii')] If the matrix $S=G(A-I)$  modulo $\ell$ is skew symmetric, then there exists $b\in \Lambda$ such that $AR_b$ is coprime to $\ell$ and the modulo $\ell$ reduction of $G(AR_b-I)$ is not skew symmetric. 
\end{enumerate}

For i'), this is obvious.

For ii'), if the modulo $\ell$ reduction $\bar{G}(\bar{A}-\bar{I})$ of $G(A-I)$ is not skew symmetric, we can apply ii) to the  matrix $\bar{A}\in \rO(\Lambda_{\FF_\ell})$ to obtain  a non-zero vector  $\bar{a}\in \Lambda_{\FF_\ell}$ such that $\bar{a}^t\bar{S} \bar{a}\neq 0 \in \FF_\ell$ and 
\begin{equation}\label{eq:red-dec}
        \bar{S}+\bar{S}^t\neq \frac{1}{\bar{a}^t\bar{S} \bar{a}}( \bar{S}\bar{a}\cdot \bar{a}^t \bar{S}+\bar{S}^t\bar{a}\cdot \bar{a}^t\bar{S}^t).
    \end{equation}
Let $a\in \Lambda$ be a lifting of $\bar{a}$. It is easy to see that this is as desired. 

For iii'), the argument is similar to ii'). 
\end{proof} 

As a result, we get the following.
\begin{theorem}\label{thm:prime-hodge-derived}Let $\ell > 2$ be a prime.
If there is a prime-to-$\ell$ rational Hodge isometry $\varphi \colon \rH^2(X,\mathbb{Q}) \xrightarrow{\sim} \rH^2(Y,\mathbb{Q})$, then there exists a prime-to-$\ell$ derived isogeny from $X$ to $Y$, which can induce  $\varphi$ up to sign. Moreover, if $X$ and $Y$ are prime-to-$\ell$ derived isogenus, then there is a prime-to-$\ell$ derived isogeny, in which the orders of $\Gm$-gerbes are all prime-to-$\ell$.
\end{theorem}

\begin{proof}
 By using the  prime-to-$\ell$ Cartan--Dieudonn\'e decomposition given in Lemma \ref{CDP}, one can decompose the Hodge isometry 
 $$\varphi \colon \rH^2(X, \ZZ_{(\ell)}) \xrightarrow{\sim} \rH^2(Y, \ZZ_{(\ell)}),$$ 
 into a chain of prime-to-$\ell$ reflective Hodge isometries. The Lemma \ref{lemma:primetoellHodgereflective} implies that the lift $\widetilde{\varphi}$ extends to an integral isometry
 \[
 \widetilde{\rH}(X, \ZZ_{(\ell)}) \xrightarrow{\sim} \widetilde{\rH}(Y, \ZZ_{(\ell)})
 \]
 In the first case of the proof in Theorem \ref{thm:reflectivetwistedderived}, the derived isogeny \eqref{eq:DerivedIso1} induces $\widetilde{\varphi}$ up to sign, and is thus prime-to-$\ell$. In the second case, the derived isogeny \eqref{eq:DerivedIso2} is also prime-to-$\ell$, since the Poincar\'e dual \[\widetilde{\rH}(X,\ZZ) \xrightarrow{\sim} \widetilde{\rH}(\widehat{X},\ZZ)\] is integral and switches $(0,0,1)$ and $(1,0,0)$.

If $X$ and $Y$ are prime-to-$\ell$ derived isogenous, then there is an isometry $\rT(X) \otimes \ZZ_{(\ell)} \cong \rT(Y) \otimes \ZZ_{(\ell)}$.
Since $\ell >2 $, there is a prime-to-$\ell$ rational Hodge isometry $\rH^2(X, \ZZ_{(\ell)}) \xrightarrow{\sim} \rH^2(Y, \ZZ_{(\ell)})$ by \cite[Theorem 3.2]{Morin-Strom}.
We can use the prime-to-$\ell$ Cartan--Dieudonn\'e decomposition again to obtain a derived isogeny, in which all the reflexive Hodge isometries are  prime-to-$\ell$. Then we can conclude the assertion by Lemma \ref{lemma:primetoellHodgereflective} and Remark \ref{rmk:primetop}.
\end{proof}

\subsection{Isogeny versus derived isogeny}
Let us now describe derived isogenies through suitable isogenies. 

It is well known that the functor $\underline{\Hom}(X,Y)$ of group homomorphisms from $X$ to $Y$ (not just as scheme morphisms) is representable by an \'etale group scheme over $k$ (see \cite[(7.14)]{MoonenGeerBook} for example). Therefore, via Galois descent, we have 
\begin{equation}\label{eq:descentHom}
\Hom_{\abelian_{\bar{k}}}(X_{\bar{k}},Y_{\bar{k}}) \xrightarrow{\sim} \Hom_{\abelian_{\bar{K}}}(X_{\bar{K}},Y_{\bar{K}}),
\end{equation}
for any algebraically closed field $\bar{K} \supset k$.
A similar statement holds for derived isogenies.

\begin{lemma}\label{geo-der-iso}
Let $X$ and $Y$ be abelian surfaces defined over $k$ with $\cha(k)=0$. Let $\bar{K}\supseteq k$ be an algebraically closed field containing $k$. Let $\bar{k}$ be the algebraic closure of $k$ in $\bar{K}$. Then if $X_{\bar{K}}$ and $Y_{\bar{K}}$ are twisted derived equivalent, so are $X_{\bar{k}}$ and $Y_{\bar{k}}$.
\end{lemma}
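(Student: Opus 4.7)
The plan is to spread the zigzag of derived equivalences over $\bar K$ down to a finitely generated $\bar k$-subalgebra $R \subset \bar K$, and then to choose a $\bar k$-rational point of $\spec R$ at which to specialize.

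First, given a zigzag as in \eqref{eq:zigzagtwsted} over $\bar K$ connecting $X_{\bar K}$ and $Y_{\bar K}$, I would choose a finitely generated $\bar k$-subalgebra $R \subset \bar K$ over which the whole zigzag is defined. Concretely, $R$ is chosen so that each intermediate abelian surface $X_i$, each Brauer class $\alpha_i,\beta_i$ (represented by $\mu_n$-gerbes for some $n$), and each kernel complex $P_i$ spreads to a smooth projective family $\mathcal{X}_i \to \spec R$, a $\mu_n$-gerbe $\mathscr{X}_i \to \mathcal{X}_i$, and a bounded complex $\mathcal{P}_i$ of twisted coherent sheaves on the appropriate product over $\spec R$. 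The endpoints may be spread as the constant families $X_{\bar k}\otimes_{\bar k} R$ and $Y_{\bar k}\otimes_{\bar k} R$, so that base change along any $\bar k$-point of $\spec R$ recovers $X_{\bar k}$ and $Y_{\bar k}$. Since $R$ is an integral domain, $\spec R$ is irreducible.

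Second, I would verify that the locus $U_i \subseteq \spec R$ on which the specialized Fourier--Mukai functor $\Phi^{\mathcal{P}_i}_s \colon \D^b(\mathscr{X}_{i-1,s}) \to \D^b(\mathscr{X}_{i,s})$ is an equivalence is open and contains the generic point. This uses Bridgeland's criterion \cite{Bridgeland99}: being an equivalence is encoded by pointwise vanishing of suitable Ext-sheaves between kernel fibers, conditions which are upper semicontinuous in flat families of twisted sheaves. The intersection $U = \bigcap_i U_i$ is then a non-empty open subscheme of $\spec R$, since the generic point lies in each $U_i$ by assumption. By Hilbert's Nullstellensatz every closed point of the finitely generated $\bar k$-algebra $R$ has residue field equal to $\bar k$, and any such closed point $x$ lying in $U$ gives a specialization $x \colon \spec \bar k \to \spec R$. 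Base changing the zigzag along $x$ yields a zigzag of twisted derived equivalences over $\bar k$ linking $X_{\bar k}$ and $Y_{\bar k}$, which is exactly the desired derived isogeny over $\bar k$.

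The main obstacle I expect is the second step: checking openness of the derived equivalence locus for twisted sheaves in families requires some technical care (flat base change and compatibility of derived functors for twisted coherent sheaves, together with the fact that $\mu_n$-gerbes spread compatibly with their associated Brauer classes). Once this openness is in place, the rest of the argument is a routine spread-and-specialize exercise, and nothing in it is specific to abelian surfaces apart from the input that the zigzag exists over $\bar K$.
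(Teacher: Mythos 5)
Your argument is genuinely different from the paper's, and both routes are reasonable. The paper does not spread out and specialize. Instead it invokes Theorem~\ref{thm1} (the filtered Torelli theorem) to recognize each step of the zigzag as realizing the next surface as a moduli space of twisted sheaves, $X_i$ or $\widehat{X_i} \cong M_{H_i}(\sX_{i-1}, v_i)$, and then descends the \emph{defining data} of those moduli spaces from $\bar K$ to $\bar k$: the Brauer class, the polarization, and the Mukai vector all live in groups ($\Br$, $\Pic$, $\widetilde{\rN}$) that are insensitive to extending the algebraically closed base field when the underlying surface is already defined over $\bar k$. Since the intermediate $X_i$ are themselves such moduli spaces (inductively defined over $\bar k$), the whole chain descends canonically, and the endpoint is forced to be $Y_{\bar k}$ because moduli space formation commutes with base change to $\bar K$. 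This sidesteps any openness or semicontinuity question: there is no family to specialize, just a functorial identification between the $\bar K$-objects and the base changes of the $\bar k$-objects.

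Your spread-and-specialize route is a valid general strategy, and it is honest of you to flag the openness of the (twisted) Fourier--Mukai-equivalence locus as the crux. That step is the main thing you would have to supply. The untwisted version for smooth projective families is a known but nontrivial lemma (Lieblich--Olsson prove such a statement, using that for Calabi--Yau fibers fully-faithfulness implies equivalence and Bridgeland's criterion is governed by Ext-vanishing, which is semicontinuous); the twisted analogue requires additionally that the spread-out $\mu_n$-gerbe stays flat and that twisted Ext-sheaves satisfy the same base change and semicontinuity, which is more delicate but is handled in Bragg--Yang's framework. One further point your sketch glosses over: you must also ensure the intermediate families $\cX_i\to \spec R$ have abelian-surface fibers at the chosen closed point, since the definition of derived isogeny requires the intermediate objects to be twisted \emph{abelian} surfaces; this is fine (being an abelian scheme is open on the base after spreading the group law), but should be said. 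In short, your proof is correct in outline but it asks for a semicontinuity/openness infrastructure that the paper deliberately avoids by leaning on its already-proven moduli-theoretic description of Fourier--Mukai partners.
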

\begin{proof}
As $X_{\bar{K}}$ is twisted derived equivalent to $Y_{\bar{K}}$, by Theorem \ref{thm1}, there exist finitely many abelian surfaces  $X_0, X_1,\ldots,X_n$ defined over $\bar{K}$  
with $X_0=X_{\bar{K}}$ and 
$$
X_i \cong M_{H_i}(\sX_{i-1}, v_i) \quad Y_{\bar{K}} \cong M_{H_n}(\sX_n, v_n)$$
for some $[\sX_{i-1}]\in\Br(X_{i-1})[r]$.
Let us construct abelian surfaces over $\bar k$ to connect $X_{\bar k}$ and $Y_{\bar k}$  as follows: 

Set $X_0'=X_{\bar{k}}$, then we take $X_1'=M_{H'_1}(\sX_0',v_1')$
where $\sX_0', H'_1$ and $v_1'$ are the descent of $\sX_0, H_1$ and $v$ through the isomorphisms $\Br(X_{\bar{K}})[r]\cong \Br(X_{\bar{k}})[r]$, $\NS(X_{\bar{K}})\cong \NS(X_{\bar{k}})$ and $\widetilde{\rH}(X_{\bar{K}})\cong \widetilde{\rH}(X_{\bar{k}})$.  
The invariance of Brauer group and ($\ell$-adic)Mukai lattice under extension $\bar{k} \subseteq \bar{K}$ is from the smooth base change theorem. For N\'eron--Severi group, see \cite[Proposition 3.1]{MaulikPoonen}.
Then inductively, we can define $X_i'$ as the moduli space of twisted sheaves $M_{H_i'}(\sX_{i-1}', v_i')$ (or its dual, respectively) over $\bar{k}$. Note that we have natural isomorphisms $$(M_{H_i'}(\sX_{i-1}', v_i'))_{\bar{K}}\cong M_{H_i}(\sX_{i-1}, v_i)$$ over $\bar{K}$. In particular,  $(M_{H_i'}(\sX_{n}', v_i'))_{\bar{K}} \cong Y_{\bar{K}}$. It follows that $M_{H_i'}(\sX_{n}', v_i') \cong Y_{\bar k}.$
\end{proof}

For any abelian surface $X_{\CC}$ over $\CC$, the spreading out argument shows that there is a finitely generated field $k \subset \CC$ and an abelian surface $X$ over $k$ such that $X \times_k \CC \cong X_{\CC}$.
We have the following Artin comparison
\begin{equation}\label{eq:artincomparison}
\rH^i_{\et}(X_{\bar{k}},\Zl) \cong \rH^i(X_\mathbb{C},\mathbb{Z}) \otimes_{\mathbb{Z}} \Zl,
\end{equation}
for any $i \in \mathbb{Z}$ and $\ell$ a prime.  Suppose $Y$ is another abelian surface defined over $k$. Suppose $f\colon Y_{\CC}\to  X_{\CC}$ is a prime-to-$\ell$ quasi-isogeny. By definition, it induces an isomorphism of $\ZZ_{(\ell)}$-modules
\[
f^* \colon \rH^1(X_{\CC},\ZZ) \otimes \ZZ_{(\ell)} \xrightarrow{\sim} \rH^1(Y_{\CC},\ZZ) \otimes \ZZ_{(\ell)},
\]
such that there is a commutative diagram
\[
\begin{tikzcd}
\rH^i(X_{\CC},\ZZ) \otimes \ZZ_{(\ell)} \ar[r,"\sim"] \ar[d,hook] & \rH^i (Y_{\CC},\ZZ) \otimes \ZZ_{(\ell)} \ar[d,hook] \\
\rH^i_{\et}(X_{\bar{k}},\Zl) \ar[r,"\sim"] & \rH^i_{\et}(Y_{\bar{k}},\Zl)
\end{tikzcd}
\]
for any $i$, under the comparison \eqref{eq:artincomparison}.
For the converse, we have the following simple fact given by a faithfully flat descent of modules along $\ZZ_{(\ell)} \hookrightarrow \Zl$ and the $\ell$-adic Shioda thick.
\begin{lemma}\label{prime-to-p-iso}
A (quasi-)isogeny $f \colon Y_{\CC} \to X_{\CC}$ is prime-to-$\ell$ if and only if it induces an isomorphism of integral $\ell$-adic realizations
\[
f^* \colon \rH^2_{\et}(X_{\bar{k}}, \Zl) \xrightarrow{\sim}\rH^2_{\et}(Y_{\bar{k}},\Zl).
\]
\end{lemma}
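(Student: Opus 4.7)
The plan is as follows.

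The forward direction is immediate: if $f$ is prime-to-$\ell$, then $f$ and $f^{-1}$ both lie in $\Hom\otimes\ZZ_{(\ell)}$, so $f^{*}$ is a $\ZZ_{(\ell)}$-integral isomorphism on $\rH^{1}$. Base-changing to $\Zl$ via the Artin comparison \eqref{eq:artincomparison} and taking the wedge square yields the required $\Zl$-integral isomorphism on $\rH^{2}$.

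For the converse, I would assume $\ell$ is odd and that $f^{*}$ on $\rH^{2}_{\et}(X_{\bar{k}},\Zl)$ is integral. Viewing $\psi\coloneqq f^{*}|_{\rH^{1}}$ as a $\Ql$-linear isomorphism $\rH^{1}_{\et}(X_{\bar{k}},\Ql)\xrightarrow{\sim}\rH^{1}_{\et}(Y_{\bar{k}},\Ql)$, we have $\wedge^{2}\psi=f^{*}|_{\rH^{2}}$. Applying the $\ell$-adic Shioda's trick (Proposition~\ref{prop:etseconcdtofirst}) to $f^{*}|_{\rH^{2}}$ produces a $\Zl$-integral isomorphism $\psi_{\ell}$ on $\rH^{1}$ with $\wedge^{2}\psi_{\ell}=\pm\,\wedge^{2}\psi$. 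A short eigenvalue computation shows that the fibre of $\wedge^{2}\colon\GL_{4}(\Ql)\to\GL(\wedge^{2}\Ql^{4})$ over $I$ is $\{\pm I\}$ and its fibre over $-I$ is $\{\pm i I\}$ (non-empty only when $i\in\Ql$, in which case $i\in\Zl^{*}$). Consequently $\psi\cdot\psi_{\ell}^{-1}\in\Zl^{*}\cdot I$, which forces $\psi$ itself to be a $\Zl$-integral isomorphism.

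To conclude, I will use that $\psi$ is the $\Zl$-scalar extension of the underlying rational map $f^{*}\colon \rH^{1}(X_{\CC},\QQ)\to \rH^{1}(Y_{\CC},\QQ)$ induced by the quasi-isogeny $f$. The $\Zl$-integrality of $\psi$, combined with faithfully flat descent of modules along $\ZZ_{(\ell)}\hookrightarrow\Zl$, yields $\psi(\rH^{1}(X_{\CC},\ZZ_{(\ell)}))\subseteq \rH^{1}(Y_{\CC},\ZZ_{(\ell)})$, and the same reasoning applied to $\psi^{-1}$ produces the inverse. Hence $f$ admits an inverse in $\Hom(X_{\CC},Y_{\CC})\otimes\ZZ_{(\ell)}$, i.e.\ $f$ is prime-to-$\ell$.

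The main obstacle I anticipate is the sign ambiguity produced by Shioda's trick; this is precisely what the kernel computation above resolves. The case $\ell=2$ would need a separate treatment, as Proposition~\ref{prop:etseconcdtofirst} is stated only for odd $\ell$.
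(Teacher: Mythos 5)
Your proof is correct and follows exactly the route the paper indicates (the $\ell$-adic Shioda's trick of Proposition~\ref{prop:etseconcdtofirst} plus descent from $\Zl$ to $\ZZ_{(\ell)}$ via $\ZZ_{(\ell)}=\QQ\cap\Zl$); the computation of the fiber of $\wedge^{2}$ over $\pm I$ is precisely what is needed to upgrade the integrality of $\psi_{\ell}$ to that of $f^{*}|_{\rH^{1}}$, and your flag on the $\ell=2$ case is consistent with the paper. The only step left implicit is that $f^{*}|_{\rH^{2}}$ is $d$-admissible before Proposition~\ref{prop:etseconcdtofirst} can be invoked: this holds because $\det(f^{*}|_{\rH^{2}})=\pm\deg(f)^{3}\in\Zl^{*}$ once $f^{*}|_{\rH^{2}}$ is assumed to be a $\Zl$-lattice isomorphism, so $\deg(f)\in\Zl^{*}$.
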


Inspired by Shioda's trick for Hodge isogenies \ref{prop:hodgeisogeny}, we introduce the following notions.
\begin{definition}\label{def:principalquasiiso}
Let $X$ and $Y$ be $g$-dimensional abelian varieties over $k$.
\begin{itemize}
    \item $X$ and $Y$ are (prime-to-$\ell$) \emph{principally isogenous} if there is a (prime-to-$\ell$) isogeny $f$ from $X$ to $Y$ of square degree, that is, $\deg(f)=d^2$ for some $d\in \ZZ$.  This $f$ is called a \emph{principal isogeny}.

    \item  An isogeny $f:X\to Y$ is \textit{quasi-liftable} if $f$ can be written as the composition of finitely many isogenies that are liftable to characteristic zero. 
\end{itemize}

\end{definition}
Now, we can state the main result in this section, which yields in particular Theorem \ref{TDGC}.
\begin{theorem}\label{thm:twistedisogenychar0}
Suppose $\mathrm{char}(k)=0$. Let $\ell >2$ be a prime. The following statements are equivalent:
\begin{enumerate}
    \item $X$ is (prime-to-$\ell$) principally isogenous to $Y$ over $\bar{k}$.
    \item $X$ and $Y$ are (prime-to-$\ell$) derived isogenous over $\bar{k}$.
\end{enumerate}
\end{theorem}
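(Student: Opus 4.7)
The plan is to use the rational Hodge realization as a bridge between the two sides. I would first reduce to $k = \CC$: derived isogeny descends (and ascends) between algebraically closed fields of characteristic zero by Lemma \ref{geo-der-iso}, while the principal-isogeny side descends via the representability of the Hom-scheme and \eqref{eq:descentHom}. So after spreading $X$ and $Y$ out to a finitely generated subfield of $\bar k$ and embedding it into $\CC$, it suffices to treat $k = \CC$.

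For $(2) \Rightarrow (1)$, the motivic realization of the twisted derived equivalence (\S\ref{sec:cohomologyrealization} and \eqref{eq:HodgerealizationMukai}) produces a rational Hodge isometry $\varphi \colon \rH^2(X, \QQ) \xrightarrow{\sim} \rH^2(Y, \QQ)$. After possibly replacing $Y$ by $\widehat Y$ (equivalently, postcomposing with the Poincar\'e isomorphism of Example \ref{ex:poincareisomorphism}, which has determinant $-1$), I may assume $\varphi$ is admissible, and then Proposition \ref{prop:hodgeisogeny}(2) yields an isogeny $Y \to X$ of degree $d^2$, which after dualization gives an isogeny $X \to Y$ (or $\widehat X \to Y$) of square degree. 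For the prime-to-$\ell$ refinement, the hypothesis forces $\varphi$ to be $\ZZ_{(\ell)}$-integral; combining the $\ell$-adic Shioda trick (Proposition \ref{prop:etseconcdtofirst}) with the uniqueness (up to sign) of the wedge lift on $\rH^1$, one sees that the rational Hodge lift $\psi$ of $\varphi$ on $\rH^1$ is itself $\ZZ_{(\ell)}$-integral. Clearing denominators by an integer coprime to $\ell$ then produces a prime-to-$\ell$ isogeny of square degree, compatibly with Lemma \ref{prime-to-p-iso}.

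For $(1) \Rightarrow (2)$, a principal isogeny $f$ of degree $d^2$ produces the admissible Hodge isometry $\varphi \coloneqq f^*/d$, and Corollary \ref{cor:hodge-derived} gives the required derived isogeny. The prime-to-$\ell$ refinement is where I expect the main obstacle: one must decompose $\varphi$ via Cartan--Dieudonn\'e \eqref{eq:decomposition} into reflections $\varphi_{b_i}$ with each $\tfrac{(b_i, b_i)}{2}$ coprime to $\ell$, so that each reflexive step can be realized by a prime-to-$\ell$ twisted Fourier--Mukai equivalence (Remark \ref{rmk:primetop}). For a $\ZZ_{(\ell)}$-integral isometry $\varphi$ of the unimodular lattice $\Lambda$, existence of such an integral Cartan--Dieudonn\'e decomposition should follow, for $\ell$ odd, by choosing primitive $b_i$ whose squares are $\ell$-adic units; the case $\ell = 2$ will require a separate spinor-norm argument in the spirit of \S\ref{subsection:admissiblebasis}.
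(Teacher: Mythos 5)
Your proof matches the paper's approach essentially step for step: reduce to $k=\CC$ via Lemma \ref{geo-der-iso} and the descent isomorphism \eqref{eq:descentHom}, pass between derived isogenies and principal isogenies through the rational Hodge realization using Proposition \ref{prop:hodgeisogeny}(2) and Corollary \ref{cor:hodge-derived}, and handle the prime-to-$\ell$ refinement via the $\ell$-adic Shioda trick (the paper packages this as Lemma \ref{prime-to-p-iso}) together with a prime-to-$\ell$ Cartan--Dieudonn\'e decomposition (the paper's Lemma \ref{CDP}, proved by a mod-$\ell$ lifting argument following Scherk). Your observation that $\ell=2$ requires separate care for the decomposition step is consistent with the hypothesis $\ell>2$ in Lemma \ref{CDP}.
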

\begin{proof} 
$(1) \Rightarrow (2)$:  we can assume that $f:X\to Y$ is a principal isogeny defined over a finitely generated field $k'$. By embedding $k'$ into $\CC$,  two complex abelian surfaces $X_\CC$ and $Y_\CC$ are derived isogenous since there is a rational Hodge isometry $$\frac{1}{n} f^\ast\otimes \QQ:\rH^2(Y_\CC,\ZZ)\otimes \QQ \cong \rH^2(X_\CC,\ZZ)\otimes \QQ $$  where $\deg(f)=n^2$.  By Lemma \ref{geo-der-iso}, one can conclude that $X_{\bar{k}}$ and $Y_{\bar{k}}$ are derived isogenous, with rational Hodge realization $\frac{1}{n} f^\ast \otimes \QQ$.

If $f$ is a  prime-to-$\ell$ isogeny, the map $\frac{1}{n} f^\ast $   restricts to an isomorphism $$\rH^2(Y_{\CC},\ZZ) \otimes \ZZ_{(\ell)} \xrightarrow{\sim} \rH^2(X_{\CC},\ZZ)\otimes \ZZ_{(\ell)}.$$  The assertion then follows from Theorem \ref{thm:prime-hodge-derived}.

To deduce $(2) \Rightarrow (1)$,  we may assume $X$ and $Y$ are derived isogenous over a finitely generated field $k'$. Embedding $k'$ into $\CC$, $X_\CC$ and $Y_\CC$ are derived isogenous as well by Lemma \ref{geo-der-iso}. According to Remark \ref{rmk:H2realization}, there is a Hodge isometry
\begin{equation}\label{eq:HodgeH2}
\varphi \colon \rH^2(Y_{\CC},\QQ) \xrightarrow{\sim} \rH^2(X_{\CC},\QQ).
\end{equation}
According to Example \ref{ex:poincareisomorphism}, we can assume $\varphi$ is admissible after replacing $X$ by its dual $\widehat{X}$.
By Proposition \ref{prop:hodgeisogeny}, they are principally isogenous over $\CC$. It follows that $X$ and $Y$ are principally isogenous over $\bar{k}$ by \eqref{eq:descentHom}.

If $\D^b(X) \sim \D^b(Y)$ is prime-to-$\ell$, then we can choose a motive isomorphism $\frh^2(X) \simeq \frh^2(Y)$ whose $\ell$-adic realization $\varphi_{\ell}$ is integral by the cancellation theorem over $\ZZ_{\ell}$ (see \cite[Theorem 92:3]{MearaLattice}).  The principal isogeny that induces $\varphi$ is prime-to-$\ell$ by Lemma \ref{prime-to-p-iso}. This proves the assertion. 
\end{proof}

\subsection{Proof of Corollary \ref{cor:TDGC}}
\label{subsec:proofTDGC}
Let us summarize all the results which conclude Corollary \ref{cor:TDGC}. Using an argument similar to the one in Theorem \ref{thm:twistedisogenychar0}, we can reduce them to the case $k=\CC$.
\subsubsection*{$(i)\Leftrightarrow (ii) $}  This is Theorem \ref{thm:twistedisogenychar0}. 

\subsubsection*{$(i)\Leftrightarrow (vi)$} This is Corollary \ref{cor:hodge-derived}.

\subsubsection*{$ (vi)\Leftrightarrow (vii) \Leftrightarrow (viii)$} It follows from the Witt cancellation Theorem.

\subsubsection*{$(i)\Leftrightarrow (iii) $}   This is Corollary \ref{cor:HodgeKummer}.

\subsubsection*{$(ii)\Rightarrow (iv)\Rightarrow (v) $}  This is  from the computation in \cite[Proposition 4.6]{LV19}. In fact, one may take the correspondence
\[\Gamma\coloneqq \bigoplus_i \Gamma_{2i} \colon \frh^{even}(X) \xrightarrow{\sim} \frh^{even}(Y),\]
where
\[\Gamma_{2i} \coloneqq \frac{1}{n^i} f^* \circ \pi_X^{2i} \colon \frh^{2i}(X) \to \frh^{2i}(Y),
\]
and $f \colon X \to Y$ is the given principal isogeny.

\subsubsection*{$(v)\Rightarrow (ii) $} 
Let $\Gamma \colon \frh^{\even}(X) \xrightarrow{\sim} \frh^{\even}(Y)$ be an isomorphism of Frobenius algebra objects.
The Betti realization of its second component is a Hodge isometry by the Frobenius condition (\cf\cite[Theorem 3.3]{LV19}). Thus, $X$ and $Y$ are derived isogenous by Corollary \ref{cor:hodge-derived}, and hence are principally isogenous.

\section{Derived isogeny in positive characteristic}\label{section:quasiisogenycharp}
In this section, we prove the twisted derived Torelli theorem for abelian surfaces over odd characteristic fields. The primary strategy is to lift everything to characteristic zero.  Throughout this section,  we let $k$ denote an algebraically closed field with characteristic $p>3$.

\subsection{Lifting of derived isogenies and quasi-isogenies}\label{subsec:derivedisomixed}  
 Let us start with a lifting result for derived isogenies, which is the only place we may require $p>3$.

\begin{proposition}\label{prop:liftingprimetop}
Let $\sX_0\to X_0$ and $\sY_0\to Y_0$ be twisted abelian surfaces over $k$, which are of finite height. If there is a derived equivalence $\Phi_0\colon \rD^{(1)}(\sX_0) \to \rD^{(1)}(\sY_0)$, then there exists a discrete valuation ring $V$  whose residue field is $k$ and twisted abelian surfaces
\begin{center}
\begin{tikzcd}
\sX_V \ar[r]\ar[rd] & X_V \ar[d]  \\
~ & \Spec (V) 
\end{tikzcd}  ~ and  ~\begin{tikzcd}
\sY_V \ar[r]\ar[rd] & Y_V \ar[d] \\
~ & \Spec (V)
\end{tikzcd} 
\end{center}
over $V$ so that 
\begin{itemize}
    \item the special fibers  are  geometrically isomorphic to $\sX_0\to X_0$  and $\sY_0 \to Y_0 $ respectively. 
    \item there is a Fourier--Mukai transform $\Phi_V \colon \rD^{(1)}(\sX_V)\to \rD^{(1)}(\sY_V)$ whose Fourier-Mukai kernel restricting to $\sX\times \sY $ induces $\Phi_0$. 
\end{itemize}
Moreover, if $\Phi_0$ is prime-to-$p$ and $p>3$, the derived equivalence $\Phi_K\colon \rD^{(1)}(\fX_K)\to \rD^{(1)}(\fY_K)$ on the generic fiber is also prime-to-$p$ where $K$ is the fraction field of $V$.  
\end{proposition}

\begin{proof}
The proof proceeds similarly to \cite[Theorem 5.8]{BraggYang2021}, which proves the existence of liftings of derived isogenies between K3 surfaces.  By Theorem \ref{thm1}, we know that that $$\sX_0^{(-1)}\cong \srM_H(\sY_0, v)$$ is a moduli stack of $\sY_0$-twisted coherent sheaves for some vector $v\in \widetilde{\rN}(\sY_0)$. 
By Lemma \ref{lemma:liftingtwistab}, we can find a DVR $V$ and a projective lift $\sY_V\to Y_V$ over $V$ such that $\NS(Y_V)\cong \NS(Y_0)$. Let $H_V$ be the element in $\NS(Y_V)$ that extends $H$. Following the description of twisted extended N\'eron--Severi lattice as in Proposition \ref{prop:extendedNeronSeveri}, we can see $\widetilde{\rN}(\sY_V) \cong \widetilde{\rN}(\sY_0)$ and hence the twisted Mukai vector $v$ can be extended over $V$, still denoted by $v$.  

Let $\sX_V^{(-1)}=\sM_{H_V}(\sY_V, v)$ be the relative moduli stack of $\sX_V$-twisted coherent sheaves. The universal object in $\rD^{(-1,1)}(\sX_V \times \sY_V)$ induces a derived equivalence $\Phi_V \colon \rD^{(1)}(\sX_V)\to \rD^{(1)}(\sY_V)$ as desired.

For the last assertion, we need to prove that the $p$-adic realization of $\Phi_K$ is integral. This can be deduced from a similar argument as in the proof of Theorem 1.5 in \cite{BraggYang2021}, based on Cais--Liu's crystalline cohomological description for the integral $p$-adic Hodge theory (\cf\cite{CL19,CL19Err}). Let us sketch the proof. As $\Phi$ is prime-to-$p$, its cohomological realization restricts to an isometry of $F$-crystals
\[
\widetilde{\varphi}_p \colon \rH^{\even}_{\crys}(X_0/W) \simeq \rH^{\even}_{\crys}(Y_0/W)
\]
by our definition. The base extension $\widetilde{\varphi}_p \otimes K$ can be identified with the de Rham cohomological realization of $\Phi_K$ 
\[
\widetilde{\varphi}_K \colon \rH^{\even}_{\dR}(X_K/K) \simeq \rH^{\even}_{\dR}(Y_K/K)
\]
by Berthelot--Ogus comparison (\cf\cite[Corollary 2.5]{BO83} or \cite[Theorem B.3.1]{GilletMessing87}). It also preserves Hodge filtrations. 
Let $S$ be the $p$-completion of the divided power envelope of the pair $(W\llbracket u \rrbracket, \ker(W\llbracket u \rrbracket \to \cO_K))$.
Then the map
\begin{equation}\label{eq:iso-S}
    \widetilde{\varphi}_p \otimes_W S \colon \rH^{\even}_{\crys}(X_0/S) \xrightarrow{\sim} \rH^{\even}_{\crys}(Y_0/S)
\end{equation}
is an isomorphism of strongly divisible $S$-lattices (\cf\cite[\S 4]{CL19}). If $p >3$,  according to \cite[Theorem 5.4]{CL19}, one can apply Breuil's functor on \eqref{eq:iso-S} to see that $\phi_K$ restricts to an $\Zp$-integral $\Gal(\bar{K}/K)$-equivariant isometry $\rH^{\even}_{\et}(X_{\bar{K}},\Zp) \xrightarrow{\sim} \rH^{\even}_{\et}(Y_{\bar{K}}, \Zp)$.
\end{proof}
\begin{remark}
The technical requirement for $p >3$ is needed in \cite[Theorem 4.3 (3),(4)]{CL19}. When $\cO_K = W(k)$ is unramified, this condition can be released to $p>2$ by using Fontaine's result \cite[Theorem 2 (iii)]{Fontaine83}. In general, when $p=3$, a possible approach is to prove Shioda's trick as in \S\ref{section:shiodatrick} for strongly divisible $S$-lattices (\cf\cite[Definition 2.1.1]{Breuil99}), which can reduce the statement to crystalline Galois representations of Hodge--Tate weight one. 
\end{remark}

Next, one can  lift separable isogenies between abelian surfaces.
\begin{proposition}\label{lifting} 
Let $f \colon X_0 \to Y_0$ be a separable isogeny between two abelian surfaces over $k$.  Let $W=W(k)$ be the ring of Witt vectors. Then there exist liftings $X_W \to \Spec(W)$ and $Y_W \to \Spec(W)$ such that isogeny $f$ can be lifted to an isogeny $f_W \colon X_W \to Y_W$ such that $\deg f = \deg f_W$. In particular, every prime-to-$p$ isogeny can be lifted to a prime-to-$p$ isogeny.

\end{proposition}
\begin{proof}
According to \cite[Proposition 11.1]{Oort87}, there is a projective lifting $X_W \to \Spec(W)$ of $X_0$. Given that $f$ is separable, $\ker f \subset X_0$ constitutes a finite \'etale group scheme over $k$, which is liftable. 
Choosing a lifting $G_W \subset X_W$ of $\ker f $, we obtain an isogeny $$f_W \colon X_W \to Y_W \coloneqq X_W/G_W, $$ which serves as a lifting of $f$.  If $f$ is prime-to-$p$, then we have $\ker f_W \subseteq X_W[n]$ for some $n$ that is coprime to $p$. Consequently,  $f_W$ is also prime-to-$p$.
\end{proof}

\subsection{Specialization of prime-to-$p$ derived isogenies}
Next, we shall show that prime-to-$p$ geometrically derived isogenies are preserved under reduction.  The idea is to show that the specialization of a moduli space of stable twisted sheaves on an abelian surface or K3 surface remains a moduli space. 
\begin{theorem}\label{thm:specialization}
Let $V$ be a DVR with residue field $k$ and $K=\mathrm{Frac}(V)$.  Let $X_V \to \mathrm{Spec}(V)$ and $Y_V\to \mathrm{Spec}(V)$ be projective abelian surfaces or K3 surfaces over $\mathrm{Spec}(V)$ satisfying 
\begin{equation}\label{eq:pic-const}
    \NS(X_{\bar{K}})\cong \NS(X_k)
\end{equation} where $X_k$ is the special fiber of $X_V\to \Spec(V)$. If their generic fibers $X_K$ and $Y_K$ are (geometrically) prime-to-$p$ derived isogenies, so are the special fibers $X_k$ and $Y_k$.
\end{theorem}
\begin{proof} 

With Theorem \ref{thm:prime-hodge-derived}, it is sufficient to consider the case where there is a derived equivalence
\[
\Phi_V \colon \D^{(1)}(\sX_{\bar K}) \xrightarrow{\sim}\D^{(1)}(\sY_{\bar{K}})
\]
for some  prime-to-$p$  $\GG_m$-gerbes $\sX_K\to X_K $ and $\sY_K\to Y_K$.  From Theorem \ref{thm1}, we know that there is an isomorphism
\[
\sY_{\bar{K}} \cong \sM_H(\sX_{\bar{K}}, v_{K})^{(-1)},
\] for some twisted Mukai vector $v_K \in \widetilde{\rN}(\sX_K)$ and $H_K\in \NS(X_{\bar{K}})$ being $v$-generic. Up to taking a finite extension, we may assume that everything can be defined over $K$. 
 
 We claim that there exists a $\GG_m$ gerbe $\sX_V\to X_V$ whose restriction to $\Spec(K)$ is $\sX_K\to X_K$. It suffices to show that the class $[\sX_K]\in \Br(X_K)$ can be extended to an element in $\Br(X_V)$. 
 By the Chinese remainder theorem, we may assume $\ord([\sX_K]) = \ell^n$ for some prime $\ell \neq p$.
 For each prime $\ell\neq p$, from the Kummer sequence, we have the following commutative diagram
 \[
 \begin{tikzcd}[column sep = small]
     0 \ar[r] & \Pic(X_V)/\ell^n \ar[r] \ar[d] & \rH^1_{\et}(X_V, \mu_{\ell^n}) \ar[d] \ar[r] & \Br(X_V)[\ell^n] \ar[d] \ar[r]& 0 \\
     0 \ar[r] & \Pic(X_{K})/\ell^n \ar[r] & \rH^1_{\et}(X_{K}, \mu_{\ell^n}) \ar[r] & \Br(X_{K})[\ell^n] \ar[r] & 0
 \end{tikzcd}
 \]
The second vertical morphism is an isomorphism by smooth and proper base change. Therefore, $\Br(X_V)[\ell^n]\to \Br(X_K)[\ell^n]$ is surjective, which proves the claim.

By our assumption \eqref{eq:pic-const}, we can pick extensions $v_V\in \widetilde{\rN}(\sX_V)$ and $H_V\in \Pic(X_V)$ of $v_K$ and $H_{K}$.  Let $\sM_{H_V} (X_V, v_V)\to \Spec(V)$  be the relative moduli space of $H_V$-stable twisted sheaves. Then we have the following commutative diagram 
\[
\begin{tikzcd}[column sep=small]
M_{H_V}(\sX_V,v_V)\ar[d]& M_{H_K}(\sX_K,v_K)\ar[l] \ar[r,"\cong"] \ar[d] & Y_K \ar[r]\ar[d] & Y_V \ar[d] \\
\Spec(V) &\spec(K)\ar[l] \ar[r] & \spec(K) \ar[r] & \Spec (V)
\end{tikzcd}
\]
According to Matsusaka--Mumford \cite[Theorem 1]{MM64}, the isomorphism between the generic fiber can be extended to $\Spec (V)$. In particular, $Y_k$ is isomorphic to $M_{H_k}(\sX_{k},v_{k})$ where $v_k=v_V|_{\Spec~k}$ and $H_k=H_V|_{\Spec~k}$.  It follows that there is a prime-to-$p$ derived equivalence $\rD^{(1)}(\sX_k)\simeq \rD^{(-1)}(\sM_{H_k}(\sX_k,v_k))$. 
\end{proof}

\begin{remark}\label{rmk:specialization}
Our proof fails when the twisted derived equivalence is not prime-to-$p$. This is because if the associated Brauer class $\alpha$ has order $p^n$, the map  $$\Br(X_V)[p^n]\to \Br(X_K)[p^n]$$ may not be surjective (\cf\cite[6.8.2]{Poo17}).  
\end{remark}

\subsection{Proof of Theorem \ref{derivediso}}
When $X$ or $Y$ is supersingular, the assertion follows from Proposition \ref{prop:SupersingularModuli} (2).  So we can assume that $X$ and $Y$ both have finite height.

\subsubsection*{$(i')\Rightarrow (ii') $}
By Proposition \ref{prop:liftingprimetop}, we can find projective liftings $ X_V\to \Spec (V)$ and $Y_V\to \Spec(V)$  of $X$ and $Y$ over some DVR $V$ such that there is a prime-to-$p$ twisted derived equivalence between generic fibers $X_K$ and $Y_K$.

By Theorem \ref{thm:twistedisogenychar0}, the generic fibers $X_K$ and $Y_K$ are geometrically prime-to-$p$ principally isogenous. Up to a finite extension of $K$, we can find a prime-to-$p$ principal isogeny  $f_K \colon X_K \to Y_K$. The N\'eron extension property of smooth models $X_V, Y_V$ (\cite[\S 7.3, Proposition 6]{NeronModels}) ensures that $f_K$ can be extended to an isogeny $$f_V \colon X_V \to Y_V.$$ The restriction $f_k:X\to Y$ over the special fibers is still a principal isogeny and we can conclude that $f_k$ is prime-to-$p$  by using Tate's spreading theorem for $p$-divisible groups (\cf\cite[Theorem 4]{tate67}).

\subsubsection*{$(ii')\Rightarrow (i') $} Suppose that there is an isogeny $f \colon X \to Y$, which is prime-to-$p$ of degree $d^2$.
By Proposition \ref{lifting}, we can lift it to a prime-to-$p$ isogeny of degree $d^2$ over $W$:
\[
f_W \colon X_W \to Y_W.
\]
Set $K=\mathrm{Frac}(W)$. The induced isogeny $f_K$ between the generic fibers is a prime-to-$p$ principal isogeny, which induces a $G_K$-equivariant isometry 
\[
\frac{f_K^\ast}{d} \colon \rH^2_{\et}(Y_{\bar{K}}, \Zp) \xrightarrow{\sim} \rH^2_{\et}(X_{\bar{K}},\Zp).
\]  By Theorem \ref{thm:twistedisogenychar0}, there exists a prime-to-$p$ derived isogeny $ \D^b(X_{\bar{K}}) \sim \D^b(Y_{\bar{K}})$ whose $p$-adic cohomological realization is  $\frac{f_K^\ast}{d} $.  The assertion follows from Theorem \ref{thm:specialization}.

\subsection{Further remarks}From the proof of Theorem \ref{derivediso} $ (i')\Rightarrow (ii')$,  we can see that the lifting-specialization argument also works for non prime-to-$p$ derived isogenies. Thus we have 
\begin{theorem}\label{genderivediso}
 Suppose $X_0$ and $Y_0$ are abelian surfaces over $k$ with finite height. If $X_0$ and $Y_0$ are derived isogenous, then they are quasi-liftable  principally isogenous.
\end{theorem}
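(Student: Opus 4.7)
The plan is to rerun the proof of $(i')\Rightarrow(ii')$ in Theorem \ref{derivediso}, but dropping the prime-to-$p$ control. Without that control we cannot invoke Lemma \ref{lemma:liftingprimetop}, yet since our target is only a \emph{quasi-liftable} principal isogeny (not a prime-to-$p$ one), that control is precisely what is unneeded. Concretely, decompose the given derived isogeny into a zig-zag of single twisted derived equivalences $\D^b(X_{i-1},\alpha_i)\simeq\D^b(X_i,\beta_i)$ as in \eqref{eq:zigzagtwsted}. Each intermediate $X_i$ has finite height (being derived equivalent to $X$), so it suffices to produce, for each $i$, a principal isogeny $X_i\to X_{i-1}$ or $\widehat{X}_{i-1}$ that is individually liftable to characteristic zero; concatenating these, together with the obviously liftable Poincar\'e-bundle isogenies interchanging $X_i$ and $\widehat{X}_i$, will yield a quasi-liftable principal isogeny from $X$ to $Y$.

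For a single step, we proceed as follows. By Theorem \ref{thm1} (whose moduli-theoretic statement extends to the $p$-primary case via Lemma \ref{liftingtwistab} and the remark following Proposition \ref{non-emptyness}), we may identify $\sY_{\GG_m}$ with a $\GG_m$-gerbe over a moduli space $M_H(\sX,v)$, possibly after swapping $\sX$ with $\widehat{\sX}$. Apply Lemma \ref{liftingtwistab} to lift the gerbe $\sX\to X$ to $\fX\to\cX$ over a mixed characteristic discrete valuation ring $W'$; since the specialization on N\'eron--Severi groups is an isomorphism, the $v$-generic polarization $H$ and the Mukai vector $v$ both spread to the lifting. Forming the relative moduli space of $\fX$-twisted stable sheaves produces a smooth proper lifting $\fY\to\cY$ of $\sY\to Y$ over $W'$, and the universal family induces a twisted derived equivalence between the geometric generic fibers $\fX_{\bar{K}'}$ and $\fY_{\bar{K}'}$.

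Next, apply Theorem \ref{TDGC} to this derived equivalence: the geometric generic fibers $\cX_{\bar{K}'}$ and $\cY_{\bar{K}'}$ are principally isogenous, via some $f_{\bar{K}'}\colon\cY_{\bar{K}'}\to\cX_{\bar{K}'}$ (or from $\widehat{\cY}_{\bar{K}'}$) of square degree. After a further finite base change so that $f_{\bar{K}'}$ descends to the generic fiber over $W'$, the N\'eron extension property for abelian schemes yields a unique extension $f\colon\cY\to\cX$ of the same degree over $\Spec(W')$, whose special fiber is a principal isogeny $Y\to X$ that is liftable by construction. The main subtlety worth flagging is the obstruction in Remark \ref{rmk:specialization}: it concerns specializing a derived equivalence whose Brauer class has $p$-power order, and is sidestepped here because we lift the twisted abelian surface first (using Lemma \ref{liftingtwistab} for $p$-primary classes on finite-height bases) and then specialize only at the level of the isogeny produced in characteristic zero, never at the derived equivalence itself.
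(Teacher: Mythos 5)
Your proposal is correct and follows essentially the same route as the paper. The paper gives this theorem as an immediate consequence of the proof of Theorem~\ref{derivediso}~$(i')\Rightarrow(ii')$, observing that the lifting-specialization argument does not actually use the prime-to-$p$ hypothesis (that hypothesis, via Lemma~\ref{lemma:liftingprimetop}, is only needed to conclude that the resulting principal isogeny is again prime-to-$p$); your write-up just spells out exactly this, including the correct identification of why the obstruction in Remark~\ref{rmk:specialization} is irrelevant (one lifts gerbes and specializes isogenies, never derived equivalences).
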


Moreover, we believe  that the converse of Theorem \ref{genderivediso} also holds. 
\begin{conjecture}\label{gen-derivedconj}
Two abelian sufaces $X_0$ and $Y_0$ are derived isogenous over $k$ if and only if they are quasi-liftable  principally isogenous. 
\end{conjecture}

For this conjecture, our approach remains valid provided that there is a specialization theorem for non prime-to-$p$ derived isogenies. According to the proof of Theorem \ref{thm:specialization}, it suffices to establish the existence of specialization of Brauer classes of order $p$. Adhering to the notations in Theorem \ref{thm:specialization}, this needs  the restriction map $$\Br(X_V)\to \Br(X_K) $$ is surjective. See Remark \ref{rmk:specialization} for further details. 

\subsection{Derived isogeny for Kummer surfaces}
We now proceed to explore the interrelations between the derived isogenies of abelian surfaces and their associated Kummer surfaces. Using the lifting argument, the following theorem is an immediate consequence of the result in characteristic $0$.

\begin{theorem}\label{kumm-isogeny}
Assume $p>2$. 
 If $X_0$ and $Y_0$ are prime-to-$p$ derived isogenous abelian surfaces over $k$, then the associated Kummer surfaces $\Km(X_0)$ and $\Km(Y_0)$ are prime-to-$p$ derived isogenous. Moreover, if there is a derived equivalence 
 \begin{equation}
     \rD^b(\Km(X_0),\alpha_0)\simeq \rD^b (\Km(Y_0),\beta_0)
 \end{equation}
 with $ \mathrm{ord}(\alpha_0)$ and $\mathrm{ord}(\beta_0)$ prime-to-$p$, then $X$ and $Y$ are prime-to-$p$ derived isogenous. 
\end{theorem}
\begin{proof} For the first assertion, as before, we can  quasi-lift the prime-to-$p$ derived isogeny between $X$ and $Y$ to characteristic $0$. By Theorem \ref{derivediso} and Lemma \ref{prop:liftingprimetop}, their liftings are geometrically prime-to-$p$ derived isogenous. According to \cite[Corollary 4.3]{Stellari2007}, we get that the associated Kummer surfaces are prime-to-$p$ derived isogenous. It follows from Theorem \ref{thm:specialization} that $\Km(X_0)$ and $\Km(Y_0)$ are prime-to-$p$ derived isogenous.

For the last assertion, if $X_0$ and $Y_0$ are supersingular, then $\alpha_0$ and $\beta_0$ are trivial under our assumptions. In this case, the result follows from \cite[Theorem 1.2]{LZ21}. Suppose $X_0$ or $Y_0$ is of finite height (then both are of finite height).  According to \cite[Theorem 5.8]{BraggYang2021},  we can find a DVR $V$ with residue field $k$ and projective twisted K3 surfaces over $V$ \begin{center}
    $(S_V, \alpha_V)\to\Spec(V) ~\hbox{and}~(S_V', \beta_V)\to \Spec(V)$,
\end{center} satisfying that 
\begin{itemize}
\item the special fibers are  $(\Km(X_0), \alpha_0)$  and $(\Km(Y_0),\beta_0)$ respectively, 
    \item the generic fibers $(S_{K}, \alpha_{K})$ and $(S_{K}', \beta_{K})$ are geometrically derived equivalent. 
    \item  $\NS(S_{\bar{K}})\cong \NS(\Km(X_0))$ and $\NS( S'_{\bar{K}})\cong \NS(\Km(Y_0))$. 
\end{itemize}

Note that $\NS(S_K)$ and $\NS(S'_K)$ contain Kummer lattices. As seen in the proof of Lemma \ref{lemma:liftingtwistab},  this implies that there exist projective liftings of $X_0$ and $Y_0$, denoted by $X_V\to \Spec(V)$ and $Y_V\to \Spec(V)$,  such that \begin{center}
    $S_{\bar{K}}\cong \Km(X_{\bar K})$  and  $S'_{\bar{K}}\cong \Km(Y_{\bar K})$.
\end{center} 
Choose an embedding $K\hookrightarrow \CC$,  set $X_\CC=X_K\otimes_K \CC$ and $Y_\CC=Y_K\otimes_K\CC$. Then we have a prime-to-$p$ Hodge isometry
\begin{equation}\label{eq:km-iso}
    \rH^2(\Km(X_\CC), \ZZ_{(p)})\to  \rH^2(\Km(Y_\CC), \ZZ_{(p)})
\end{equation}
induced from the prime-to-$p$ derived equivalence. Based on the Kummer construction, for any abelian surface $X_\CC$,  as $p>2$, there is a natural Hodge isometry
\begin{equation*}
    \rH^2(\Km(X_\CC), \ZZ_{(p)})\cong \rH^2(X_\CC,\ZZ_{(p)}) \oplus  (\Sigma_{X_\CC}\otimes \ZZ_{(p)}),
\end{equation*}
 where $\Sigma_{X_\CC}\cong \bigoplus\limits_{i=1}^{16} \ZZ e_i $  with $(e_i, e_j)=-2\delta_{ij}$ is the Kummer lattice.   Then one can obtain a  Hodge isometry 
\begin{equation*}
    \rH^2(X_\CC, \ZZ_{(p)})\to  \rH^2(Y_\CC, \ZZ_{(p)})
\end{equation*}
 from \eqref{eq:km-iso} through the Witt cancellation procedure.  By Theorem \ref{thm:twistedisogenychar0}, $X_K$ and $Y_K$ are geometrically prime-to-$p$ derived isogenous. The assertion follows from Theorem \ref{thm:specialization}. 
\end{proof}
 
\begin{remark} It is natural to consider if one can apply the lifting method to prove the converse of Theorem \ref{kumm-isogeny}. Specifically,  one may wonder if 
 $\Km(X_0)$ and $\Km(Y_0)$  are prime-to-$p$ derived isogenous, as are $X$ and $Y$. 
 
 However, the issue is that the derived isogeny between $\Km(X_0)$ and $\Km(Y_0)$  is merely quasi-liftable, not known to be liftable. In other words, although we can lift every derived equivalence between twisted abelian surfaces or K3 surface to characteristic $0$,  we cannot necessarily find some liftings of $X_0$ and $Y_0$ respectively such that the generic fibers of their associated Kummer surfaces are prime-to-$p$ geometrically derived isogenous. 
\end{remark}

\bibliographystyle{amsplain}
\bibliography{derivedab_final}
\end{document}